\newtheorem{theorem}{Theorem}[section]
\newtheorem{lemma}[theorem]{Lemma}
\newtheorem{corollary}[theorem]{Corollary}
\newtheorem{definition1}{Definition}[section]
\newtheorem{observe}{Observation}[section]
\newtheorem{remark1}[observe]{Remark}
\newtheorem{example1}{Example}[section]
\newtheorem{aside1}[observe]{Aside}
\newenvironment{remark}{\begin{remark1} \rm}{\end{remark1}}
\def\qed{\hfill$\blacksquare$\\} \renewenvironment{proof}{\noindent {\bf 
Proof.}}{\qed}
\renewcommand{\tilde}{\widetilde}
\newif\ifshowboxes \showboxestrue
\newcommand\fboxit[1]{ \ifshowboxes \par \noindent%
  \fbox{\begin{minipage}{\textwidth}#1\vfill\end{minipage}} \else #1 \fi}
\renewcommand\Re{\operatorname{Re}}
\renewcommand\Im{\operatorname{Im}}
\newcommand{\floor}[1]{\ensuremath{ {\lfloor #1 \rfloor} }}
\newcommand{\abs}[1]{\ensuremath{ {\lvert #1 \rvert} }}
\DeclareFontFamily{U}{mathx}{\hyphenchar\font45}
\DeclareFontShape{U}{mathx}{m}{n}{
      <5> <6> <7> <8> <9> <10>
      <10.95> <12> <14.4> <17.28> <20.74> <24.88>
      mathx10
      }{}
\DeclareSymbolFont{mathx}{U}{mathx}{m}{n}
\DeclareMathAccent{\widecheck}{0}{mathx}{"71}
\DeclareMathAccent{\wideparen}{0}{mathx}{"75}
\def\R{\mathbbm{R}}
\def\1{\mathbbm{1}}
\providecommand{\e}[1]{\ensuremath{\times 10^{#1}}}
\begin{document}

\begin{center}
   \begin{minipage}[t]{6.0in}

In this manuscript, we develop an efficient algorithm to evaluate the
azimuthal Fourier components of the Green's function for the Helmholtz
equation in cylindrical coordinates. A computationally efficient algorithm
for this modal Green's function is essential for solvers for electromagnetic
scattering from bodies of revolution (e.g., radar cross sections, antennas).
Current algorithms to evaluate this modal Green's function become
computationally intractable when the source and target are close or when the
wavenumber is large.  Furthermore, most state of the art methods
cannot be easily parallelized.  In this manuscript, we present an algorithm
for evaluating the modal Green's function that has performance independent
of both source-to-target proximity and wavenumber, and whose cost grows as
$O(m)$, where $m$ is the Fourier mode. Furthermore, our algorithm is
embarrassingly parallelizable.

\thispagestyle{empty}

  \vspace{ -100.0in}

  \end{minipage}
\end{center}

\vspace{ 2.60in}
\vspace{ 0.50in}

\begin{center}
  \begin{minipage}[t]{4.4in}
    \begin{center}

\textbf{On the Efficient Evaluation of the Azimuthal Fourier Components of the
Green's Function for Helmholtz's Equation in Cylindrical Coordinates} \\

  \vspace{ 0.50in}

James Garritano$\mbox{}^{\ast \, \sharp \, \otimes}$, 
Yuval Kluger$\mbox{}^{\ast \, \sharp}$, \\
Vladimir Rokhlin$\mbox{}^{\ast \, \ddagger \, \oplus}$,  
Kirill Serkh$\mbox{}^{\dagger\, \diamond}$ \\
              \today

    \end{center}
  \vspace{ -100.0in}
  \end{minipage}
\end{center}

\vspace{ 2.00in}

\vfill

\noindent 
$\mbox{}^{\otimes}$  This author's work  was supported in part by NIH
F30HG011193 and by US NIH MSTP Training Grant T32GM007205.
\\
\noindent 
$\mbox{}^{\oplus}$  This author's work was supported in part by ONR
N00014-18-1-2353 and NSF DMS-1952751. \\
\noindent 
$\mbox{}^{\diamond}$  This author's work  was supported in part by the NSERC
Discovery Grants RGPIN-2020-06022 and DGECR-2020-00356.

\vspace{4mm}

\noindent
$\mbox{}^{\ast}$ Program in Applied Mathematics, Yale University, New
Haven, CT 06511 \\
\noindent
$\mbox{}^{\sharp}$ Yale School of Medicine, New Haven, CT 06511 \\
\noindent
$\mbox{}^{\ddagger}$ Dept.~of Mathematics, Yale University, New Haven, CT 06511 \\
\noindent
$\mbox{}^{\dagger}$ Dept.~of Math. and Computer Science, University of Toronto,
Toronto, ON M5S 2E4

\vspace{2mm}

\vfill
\eject

\tableofcontents

\section{Introduction}
This manuscript will detail how to efficiently compute the azimuthal Fourier
components of the Green's function (i.e., the modal Green's functions) for the
Helmholtz equation in three dimensions,
known to be
%
\begin{align}
G_m(\bm{x},\bm{x}')=G_m(|\bm{x}-\bm{x}'|)
 = \frac{1}{2\pi}
 \int_{-\pi}^{\pi} \frac{1}{4\pi} \frac{e^{ik| \bm{x}-\bm{x}'|}}{|\bm{x}-\bm{x}'|}
    e^{-im\theta} d\theta ,
\end{align}
%
where $\bm{x}, \bm{x'} \in \mathbb{R}^3$, $k$ is the wavenumber, and $m$ is the $m$th
azimuthal Fourier mode.
Rewriting this equation in cylindrical coordinates, with $\bm{x} = (r,\theta,z)$
and $\bm{x'} = (r',\theta ', z')$, and letting $\phi = \theta - \theta'$,
the formula for the $m$th Fourier coefficient becomes 
\begin{align}
G_m(\bm{x},\bm{x}')= \frac{1}{4\pi^2 R_0} \int_{0}^{\pi}
\frac{ e^{-i\kappa\sqrt{1-\alpha \cos \phi}} }
{\sqrt{1 - \alpha\cos\phi} }
\cos{m\phi} d\phi ,
\end{align} 
where $\kappa=kR_0$, $\alpha = 2rr'/R_0^2$, and $R_0^2=r^2+r'^2+(z-z')^2$.

This integral has two features which make numeric integration difficult: the
integrand is oscillatory, and it is near-singular when 
the distance between $\bm{x}$ and
$\bm{x'}$ is small (i.e., as $\alpha$ approaches 1).
However, the integrand vanishes for sufficiently large 
imaginary values of $\kappa$, suggesting that Cauchy's theorem can be used to
construct a contour on which all the oscillations occur where the 
integrand is negligible.

When devising an appropriate contour, it is helpful to consider three cases:
1) when $\kappa$ is zero and $m\geq 0$, 2) when $\kappa$ is arbitrary
and $m$ is small,
and 3) when both $\kappa$ and $m$ are large.

Determining the appropriate contour when $\kappa=0$ and $m\geq 0$ 
(when the Helmholtz equation becomes the Laplace equation) is
trivial, because on any vertical contour (into quadrant~IV of the complex plane) 
the integrand 
monotonically decays. 
When $\kappa>0$ and $m=1$,
the appropriate contours were solved by Gustafsson \cite{gustafsson} via the method
of steepest descent. However, Gustafsson did not analyze cases where 
both $\kappa>0$ and $m>1$. In fact, when
both $m$ and $\kappa$ are large, it turns out that no contour exists 
on which the entire integrand monotonically decays.

We develop on Gustafsson's work by integrating along the contour on which the
spherical wave component,
\begin{align}
\frac{e^{-i\kappa \sqrt{1 - \alpha \cos \phi} }}{\sqrt{1 - \alpha \cos \phi}} ,
\end{align}
monotonically decays. However, the part of the integrand dependent on azimuthal
frequency,
$\cos(m\phi)$, behaves poorly and grows on this contour.
To circumvent this behavior, we replace the term $\cos(m\phi)$ with 
a rational function approximation which does
not grow in the complex plane. The growth of $\cos(m\phi)$ along the contour
is subsumed in a collection of residues which must be added to the 
resulting integral.
\subsection{The Modal Green's Functions for the Helmholtz Equation} \label{sec:mgf}
The Green's function for the Helmholtz equation in three dimensions satisfies the
equation
\begin{align}
(\nabla^2+k^2)G_k(\bm{x},\bm{x}')=\delta(\bm{x}-\bm{x}') ,
\end{align}
where k is the wave number and $\bm{x}$, $\bm{x'} \in \mathbb{R}^3$.
The solution is an outgoing spherical wave, given by the formula 
\begin{align}
G_k(\bm{x},\bm{x}')=G_k(|\bm{x}-\bm{x}'|)
 = \frac{1}{4\pi} \frac{e^{ik| \bm{x}-\bm{x}'|}}{|\bm{x}-\bm{x}'|}.
\end{align}
We consider a problem with rotational symmetry (i.e., a body of revolution). 
Switching 
to cylindrical coordinates and expanding $G_k$ in Fourier series, we have 
\begin{align}
G_k(\bm{x},\bm{x}') = \sum_{m=-\infty}^{\infty} 
  G_{m,k}(r,z,r',z')e^{im(\theta - \theta')}, 
\end{align}
where $\bm{x} = (r,\theta,z)$, $\bm{x'} = (r',\theta', z')$.
Let $\phi=\theta-\theta'$ denote the difference in azimuthal angles.
The formula for the $m$th coefficient is 
\begin{align}
G_{m,k}(r,z,r',z')= \frac{1}{2\pi}\int_{-\pi}^{\pi} 
G_k(r,z,r',z',\phi)e^{-im\phi}d\phi.
\end{align}
We adopt notation consistent with the literature 
(see, for example, \cite{cohl,epstein,young}) and omit the subscript $k$
denoting the wavenumber. Expanding the representation for the $m$th Fourier 
coefficient, we have 
\begin{align} \label{eq:gmcyl}
\hspace{-3em}
G_m(r,z,r',z')=\frac{1}{2\pi}\int_{-\pi}^{\pi}
\frac{e^{ik\sqrt{r^2+r'^2-2rr'\cos\phi+(z-z')^2}}}
{4\pi\sqrt{r^2+r'^2-2rr'\cos\phi+(z-z')^2}}
e^{-im\phi}d\phi.
\end{align}
We then introduce the parameter $R_0$, given by
\begin{align}
R_0=\sqrt{r^2+r'^2+(z-z')^2},
\end{align}
which we use to rewrite (\ref{eq:gmcyl}), 
by defining $\kappa = kR_0$ and
$\alpha = 2rr'/R_0^2$, obtaining 
\begin{align} \label{eq:mgfexp}
G_m(\bm{x},\bm{x}')=\frac{1}{8\pi^2R_0}\int_{-\pi}^{\pi} 
\frac{e^{i\kappa\sqrt{1 - \alpha\cos\phi}}}
{\sqrt{1 - \alpha\cos\phi}} e^{-im\phi}d\phi.
\end{align}
Any numerical scheme for evaluating $G_m$ must depend on four parameters: $\kappa$,
$\alpha$, $R_0$, and $m$. Notably, $\alpha$ is bounded by $0 \leq \alpha < 1$, and
determines the growth of the integrand near $\phi =0$.
In Section \ref{se:removesingularity}, we will 
introduce the 
parameters $\beta_-$ and $\beta_+$, defined to be  
\begin{align} \label{eq:betaBoth}
\beta_- = \sqrt{1/\alpha -1} , \qquad \beta_+ = \sqrt{1/\alpha +1}.
\end{align}
We also introduce the parameters $\Delta$ and $\rho_0$, defined as
\begin{align}
\Delta &= \sqrt{(r-r')^2 + (z-z')^2},\\
\rho_0 &= 2rr'.
\end{align}
Note that $\Delta$ is the minimum distance between the source and the target, $R_0$
is the maximum distance between the source and
the target, and that \fboxit{$\Delta^2=R_0^2-\rho_0$}.
Lastly, we observe that $\beta_-$ and $\beta_+$ also given by the formulae,
\begin{align} \label{eq:beta_-eff}
\beta_- = \frac{\Delta}{\rho_0},\qquad \beta_+ = \sqrt{\frac{R_0^2+\rho_0}{\rho_0}}.
\end{align}
We note that numerically computing $\beta_-$ from $\alpha$ using 
(\ref{eq:betaBoth}) will result in cancellation error when $\alpha \approx 1$,
so it is usually better to compute $\beta_-$ directly from
formula (\ref{eq:beta_-eff}).

A representative sample of the literature related to the evaluation
of the modal Green's functions can be found in 
\cite{abdelmageed,andreasen,cheng,conway,epstein,gedney,helsing2,lai,
liu,matviyenko,vaessen}. 
\subsubsection{Number of Fourier Coefficients Needed}
\label{se:numNeed}
Matviyenko in \cite{matviyenko} derived an upper bound, $r_+$, such that
all Fourier modes $m >r_+$ geometrically decay as $m$ increases, 
with $r_+$ given by
%
%
\fboxit{
\begin{align} \label{eq:numCoeffNeed}
r_+ = \frac{\kappa}{\sqrt{2}} \sqrt{1 + \sqrt{1 - \alpha^2}},
\end{align}
}
where $\alpha = \rho_0/R_0^2$ and $\kappa=kR_0$ (see \cite{matviyenko}, 
formulae (37) and (38)).
When $\alpha\approx 1$, formula (\ref{eq:numCoeffNeed}) simplifies to
\begin{align} \label{eq:r_+approx}
r_+ \approx \frac{\kappa}{\sqrt{2}}.
\end{align}
Using Matviyenko's formula for the decay of the modal
Green's functions  (see \cite{matviyenko}, formula (40)),  
it can be shown that the magnitude of any Fourier coefficient $m > r_+$ is
bounded by
\begin{align} \label{eq:decay}
|G_m| < |G_{\lfloor r_+ \rfloor }| \Bigg(\frac{\sqrt{1 - \sqrt{1-\alpha^2}}}
{\sqrt{1 + \sqrt{1 - \alpha^2 }}} \Bigg)^{m - \lfloor r_+ \rfloor }.
\end{align}
%
%
Substituting $\alpha = 1/(\beta_-^2+1)$ into (\ref{eq:decay}), this bound can
be simplified to 
\fboxit{
\begin{align} \label{eq:decay2}
\hspace{-3em}
|G_m| < |G_{\lfloor r_+ \rfloor }| \Bigg(1- \frac{ \beta_-\sqrt{\beta_-^2 +2}}
{1 + \beta_-^2} \Bigg)^{\frac{m-\lfloor r_+ \rfloor }{2}} 
\Bigg(1+ \frac{\beta_- \sqrt{\beta_-^2 +2}}
{1 + \beta_-^2} \Bigg)^{-\frac{m-\lfloor r_+ \rfloor }{2}},
\end{align}
}
where $\beta_-$ is the scaled source-to-target distance.
When $\beta_-$ is small, $1+\beta_-^2 \approx 1$, and (\ref{eq:decay2}) can
be approximated as  
%
%
\fboxit{
\begin{align} \label{eq:decay3}
\hspace{-3em}
|G_m| \lesssim |G_{\lfloor r_+ \rfloor }| \Bigg(
\frac{1- \beta_- \sqrt{2}}{1+ \beta_- \sqrt{2}} \Bigg)
^{\frac{m - \lfloor r_+ \rfloor }{2}} \approx |G_{\lfloor r_+ \rfloor }|
(1 - \beta_- 2\sqrt{2})
^{\frac{m - \lfloor r_+ \rfloor }{2}},
\end{align}
}
where we have replaced the exponentiated term with its truncated Taylor expansion
in $\beta_-$.
Formula (\ref{eq:decay3}) can be used to determine the Fourier mode $M$
such that for $m > M$, $|G_m| < \epsilon$, where $M$ is given by 
\begin{align} \label{eq:Mbig}
M \approx \frac{2\log(\epsilon) -2\log{(|G_{\lfloor r_+ \rfloor }|)}}
{\log(1 - 2\sqrt{2} \beta_-)}  + \lfloor r_+ \rfloor.
\end{align}
By substituting (\ref{eq:numCoeffNeed}) into (\ref{eq:Mbig}, we can characterize
the order of $M$ as a function of $\beta_-$ and $\kappa$ when the source
and target are close (i.e., $\alpha \gtrsim 0.99$ or
equivalently $\beta_- \lesssim 10^{-2}$) as
\fboxit{
\begin{align} \label{eq:numNeed}
\begin{split}
\hspace{-4em}
M \approx & \frac{2\log(\epsilon) -2\log{(|G_{\lfloor r_+ \rfloor }|)}}
{\log(1 - 2\sqrt{2} \beta_-)}  +  \frac{\kappa}{\sqrt{2}} \\
=& \bigg(1 -\frac{1}{ \sqrt{2} \beta_- } \bigg) 
\big( \log(\epsilon) -\log{(|G_{\lfloor r_+ \rfloor }|} \big)+ \,
 \frac{\kappa}{\sqrt{2}}  + O(\beta_-^2)\\
=& O\Big(\frac{1}{\beta_-} + \kappa\Big)
,
\end{split}
\end{align}
}
where we have replaced the denominator of (\ref{eq:Mbig}) with its Taylor
expansion in $\beta_-$.
Lastly, it is often useful to write (\ref{eq:numNeed}) in terms
of the radius of the body of revolution and the minimum source-to-target
distance, $\Delta$.  
We rewrite (\ref{eq:numNeed}) as
\begin{align}\label{eq:numNeedPre2}
M = O\Big(\frac{\rho_0}{\Delta} + k\sqrt{\rho_0 + \Delta^2}\Big),
\end{align}
where we have substituted $\kappa = kR_0$ and $\beta_- = \rho_0/\Delta$. 
Because $\Delta^2 \ll \rho_0$,
\begin{align}\label{eq:numNeedDeltaRho}
M = O\Big(\frac{\rho_0}{\Delta} + k\sqrt{\rho_0}\Big).
\end{align}
Recall that $\rho_0 = 2rr'$.
When $\Delta$ is very small, $2rr'\approx 2r^2$, meaning that 
formula (\ref{eq:numNeedDeltaRho}) becomes
\begin{align}\label{eq:numNeedR}
M = O\Big(\frac{2r^2}{\Delta} + kr\sqrt{2}\Big) = 
O\Big(\frac{r^2}{\Delta} + kr\Big).
\end{align}
\subsubsection{Informal Description of the Spectra of the Green's Functions}

The Green's function for the Helmholtz equation in cylindrical coordinates,
\begin{align}
G(\bm{x},\bm{x}')= 
\frac{e^{i\kappa\sqrt{1 - \alpha\cos\phi}}}
{\sqrt{1 - \alpha\cos\phi}},
\end{align}
can be viewed as 
the product of the Green's function for the Laplace equation,
\begin{align}
G^L(\bm{x},\bm{x}')= \frac{1}{\sqrt{1-\alpha \cos \phi}},
\end{align}
with the band-limited term $\exp(i \kappa R(\theta))$, where
$R = \sqrt{1 -\alpha \cos \phi}$, 
$\bm{x} = (r,\theta,z)$, $\bm{x'} = (r',\theta', z')$, $\phi = \theta - \theta'$,
$\alpha = 2rr'/R_0^2$, and $\kappa=kR_0$, where $G$ and $G_L$ are understood
to be a functions of $\kappa$, $\alpha$, and $R_0$.

Recall that the Fourier transform of the product of two functions
is the convolution of their Fourier transforms.
Hence, the modal Green's function can be thought of as the convolution
of the Fourier coefficients of $G^L(\bm{x},\bm{x}')$ and the Fourier
coefficients of $\exp(i \kappa R(\theta))$.
Using this observation, combined with Matviyenko's formulae for the 
cut-off frequency (formula (\ref{eq:numCoeffNeed}))
and the rate of the decay of Fourier coefficients (formula (\ref{eq:decay})),
we now provide a rough description of the spectra of the Green's functions.

First, we describe the spectra of the Green's functions of the Laplace
equation. 
The Fourier coefficients of the Green's functions of the Laplace
equation decay as a function of $\alpha$, with the rate given by
formula (\ref{eq:decay}) (see
Figure \ref{fig:laplace}).
\begin{figure}[H]
\centering
\includegraphics[width=0.95\textwidth]{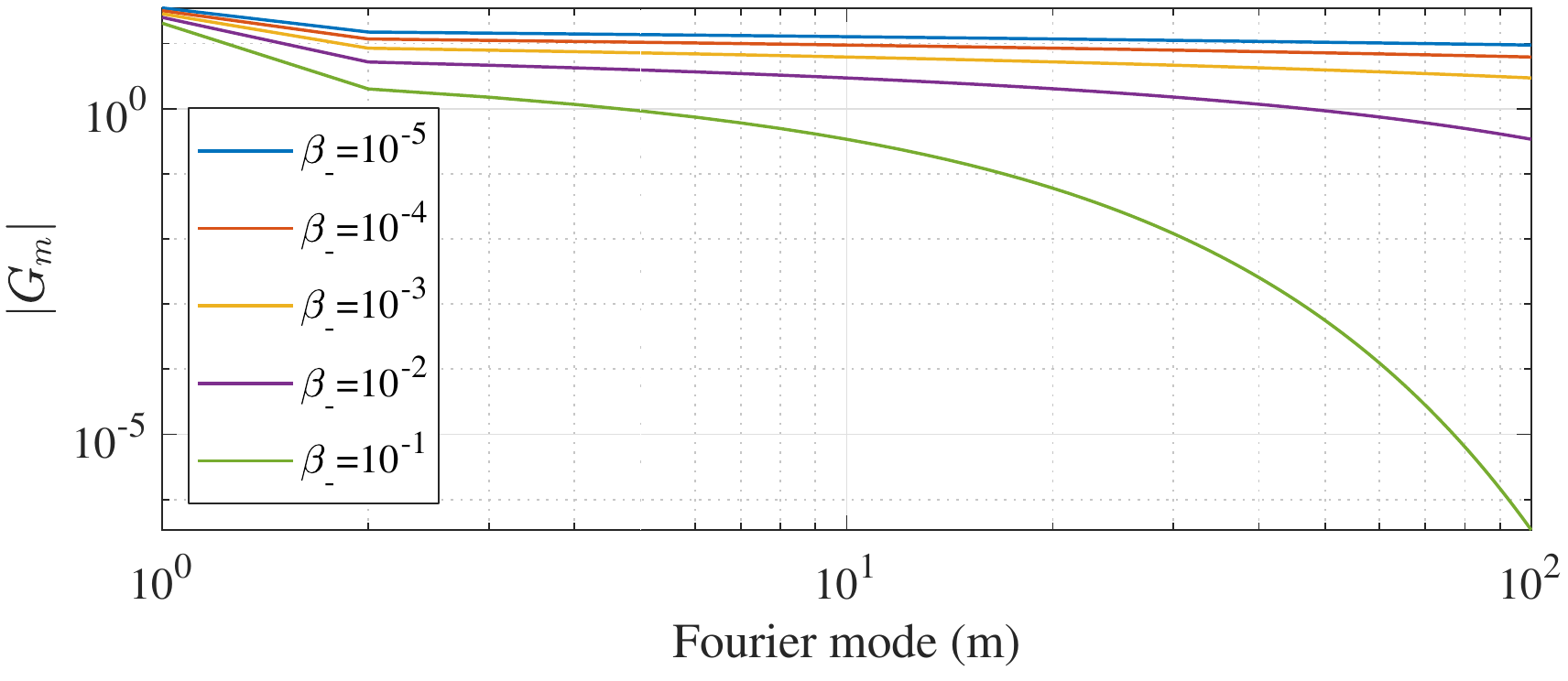}
\caption[Spectra of the Green's functions for the Laplace equation]{
\bf{ Spectra of the Green's functions for the Laplace equation with
varying source-to-target distance.}
}
\label{fig:laplace}
\end{figure}
Now that we have characterized the spectra of the Green's functions
for the Laplace equation, we are ready to describe the spectra of
the Green's functions for the Helmholtz equation when $\kappa >0$. 
Recall that formula (\ref{eq:numCoeffNeed}) provides a cut-off frequency 
$r_+$ such that, for $m > r_+$, $G_m$ geometrically decays;
recall also that $r_+$ scales with $\kappa$
(see Figure \ref{fig:kappaclose2}). For all $m > r_+$, the rate
of the decay of the Fourier coefficients is determined by the source-to-target
distance by formula (\ref{eq:decay}) (see Figure \ref{fig:spectra}).
\begin{figure}[H]
\centering
\includegraphics[width=0.95\textwidth]{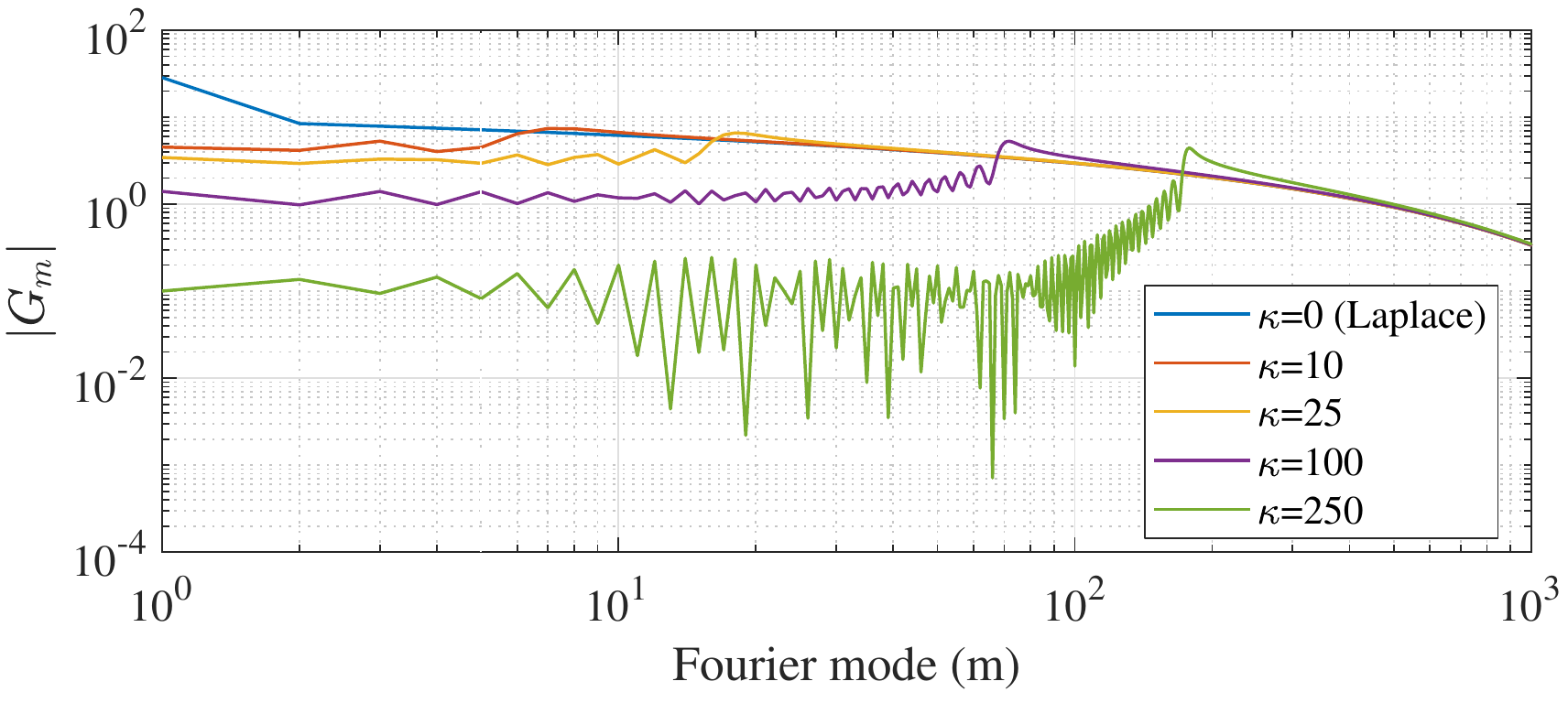}
\caption[Spectra of the Green's functions for the Helmholtz equation with varying 
scaled wavenumber $\kappa$]
{ \bf{Spectra of the Green's functions for the Helmholtz equation
when the source and target are close
with $\beta_-=10^{-3}$ and varying $\kappa$}. 
}
\label{fig:kappaclose2}
\end{figure}
\begin{figure}[H]
\centering
\includegraphics[width=0.95\textwidth]{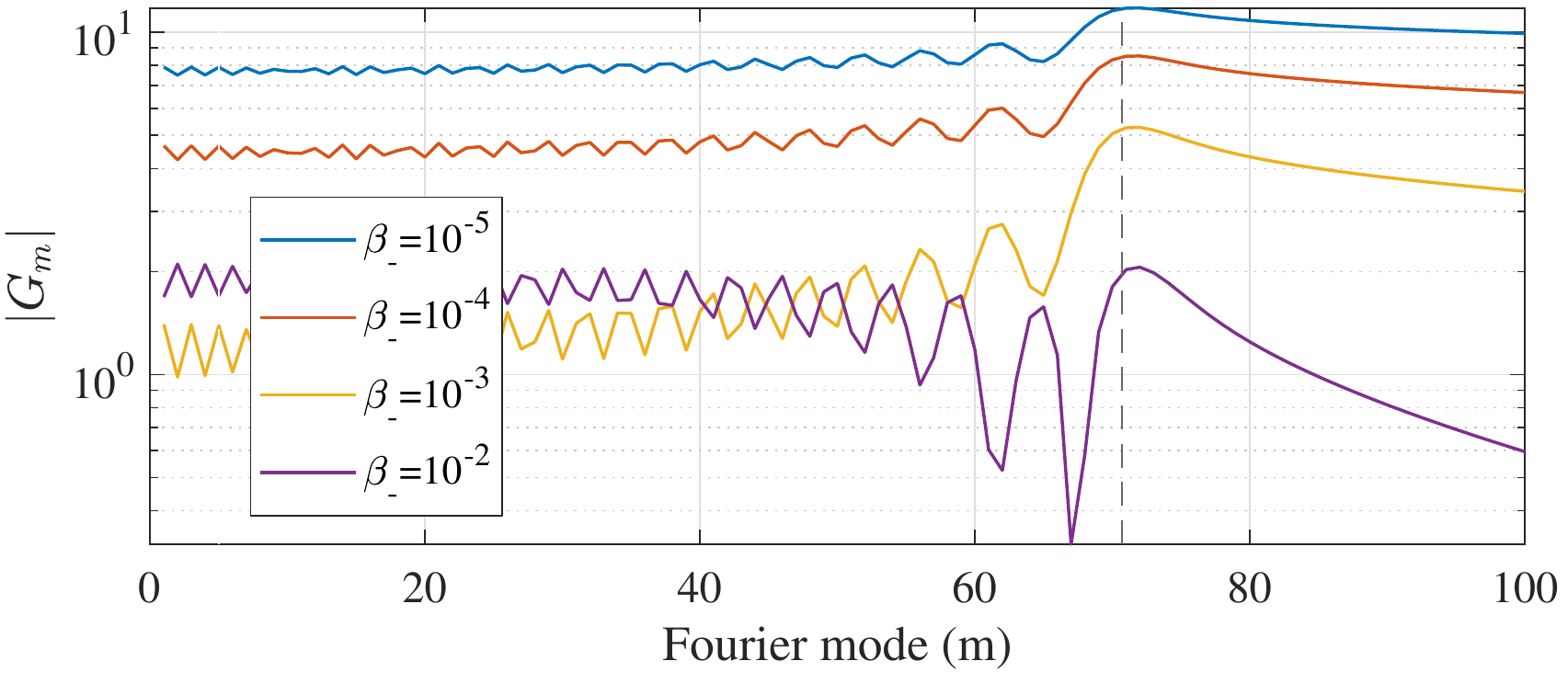}
\caption[Spectra of the Green's functions for the Helmholtz equation
with varying source-to-target distance $\beta_-$]
{ {\bf Spectra of the Green's functions for the Helmholtz equation 
when the source and target are close
for $\kappa=100$ with varying $\beta_-$.}
The approximation for the index $r_+$ after
which the Fourier modes geometrically decay
(given by formula (\ref{eq:r_+approx})) is represented by a vertical dashed line.
}
\label{fig:spectra}
\end{figure}
\subsection{Review of the Literature}
Recall from Section \ref{sec:mgf} that the modal Green's function is
a function of three parameters: 
$\kappa$, $m$, and $\alpha$. 
We divide the literature on fast algorithms for evaluating the modal
Green's function into two categories:
those that evaluate the general case of any combination of input parameters
and those that evaluate special cases of input parameters (e.g., when the
source and target
are well-separated, when $m=1$, etc.).

Almost all modern fast general-case algorithms are based on the application of the
Fast Fourier Transform (FFT) (see, for example, \cite{epstein,gedney,helsing,
helsing2,lai,vaessen,wang,young}).
In contrast, the special-case algorithms have a diverse set of methodologies
which cannot easily be summarized. Because this manuscript's topic is a
general-case algorithm which works for all input parameters, we do not review
the literature of special-case algorithms, with the exception of Gustfasson's
contour integration technique \cite{gustafsson}, which we develop on 
extensively in this manuscript.
\subsubsection{Introduction to FFT-Based Kernel Splitting}
Almost all published fast general-case algorithms 
for evaluating the modal Green's functions
(i.e., those that 
take as input an arbitrary source-to-target distance and arbitrary Fourier mode) 
use the Fast Fourier Transform (FFT). 
Because computing the Fourier coefficients of a near-singular function is 
not efficient, and because the Green's function becomes near-singular
for $\alpha \approx 1$ (i.e., when the source and target are close), 
all modern implementations of FFT-based methods
employ kernel splitting, the technique of splitting the integrand
into a near-singular portion and a non-singular portion, then computing
each portion's coefficient's separately.
For the non-singular portion, the FFT is often fast
and efficient.
For the near-singular portion, some other technique, usually a purpose-made
recurrence 
relation, is used to evaluate the singular integral.

Two kernel splittings are used in the literature of
FFT-based evaluations of the modal Green's function: the splitting of 
Gedney and Mittra 
\cite{gedney} (see, for example, \cite{gedney,vaessen,wang})
and the splitting of Helsing \cite{helsing} (see, for example, \cite{epstein,
helsing,lai,young}).

Gedney and Mittra in \cite{gedney} isolate
the near-singular portion of the integral by adding and
subtracting a $1/R$ term, resulting in the splitting
\begin{align}\label{eq:gedneysplit}
\begin{split}
G_m &= \int \frac{ e^{-i\kappa \sqrt{1-\alpha\cos\phi}} } {\sqrt{1-\alpha\cos\phi}}
 +\frac{1}{\sqrt{1 - \alpha \cos \phi}} - \frac{1}{\sqrt{1-\alpha \cos \phi}} d\phi\\
 &= \underbrace{\int \frac{1}{\sqrt{1 - \alpha \cos\phi}}  d\phi}
          _{\small \text{ near-singular}}
  +\underbrace{\int \frac{ e^{i\kappa \sqrt{1-\alpha\cos\phi}}-1}
         {\sqrt{1-\alpha\cos\phi}}d\phi}_
         {\small \text{non-singular, not very smooth}}.
\end{split}
\end{align}
With this splitting, as $\alpha \to 1$, the near-singular term's growth is
unbounded, while the non-singular term's growth is bounded.
Although this splitting isolates the near-singular portion of the integral, 
the resulting
non-singular integral is not very smooth as $\alpha \to 1$. Hence, directly applying
the FFT to the non-singular
portion remains inefficient when $\alpha \approx 1$, meaning 
that efficient evaluation of the non-singular
term in (\ref{eq:gedneysplit}) requires additional manipulation of the 
integrand. 

Helsing in \cite{helsing} split the integral via application of
Euler's formula, resulting in
\begin{align}\label{eq:helsingsplit}
\begin{split}
G_m &= \int \frac{ e^{-i\kappa \sqrt{1-\alpha\cos\phi}} }
{\sqrt{1-\alpha\cos\phi}}d\phi\\
     &= \underbrace{ \int \frac{ \cos(  \sqrt{1-\alpha\cos\phi}  )}
     { \sqrt{1-\alpha\cos\phi}  } d\phi}_
     { \small \text{ near-singular}}
     + \underbrace{\int  \frac{i \sin(  \sqrt{1-\alpha\cos\phi}  )}
     { \sqrt{1-\alpha\cos\phi}  } d\phi}_
     { \small \text{  non-singular, smooth}}.
\end{split}
\end{align}
In contrast to (\ref{eq:gedneysplit}), the non-singular portion of 
formula (\ref{eq:helsingsplit}) is smooth as $\alpha \to 1$. Consequently, the Fourier
coefficients of the non-singular portion of (\ref{eq:helsingsplit}) 
can be efficiently evaluated by the FFT.

Careful analysis of recent implementations of both kernel splitting 
techniques shows
that the splitting of Helsing is sufficient to efficiently
utilize the FFT to compute the modal Green's function
(see Section \ref{se:epstein}), while the 
splitting of Gedney and Mittra requires further techniques to evaluate
the non-singular integral.
In fact, the fastest algorithm utilizing the splitting of Gedney and Mittra,
published by Vaessen et al. \cite{vaessen},
split the non-singular integral again into a part which is smooth as $\alpha\to $
and a part which is not. It turns out that this final splitting is
essentially equivalent to Helsing's.
Because the splitting of Helsing is utilized in the fastest algorithm for 
both techniques, we first summarize the splitting of Helsing by examining a
recent fast implementation, then conclude our review by summarizing
the fastest implementation of Gedney and Mittra's splitting. Lastly, we demonstrate
that it is algorithmically equivalent to the fastest implementation of Helsing's
splitting.

%
%
%
%
\subsubsection{Method of Epstein et al., a Recent Implementation of Helsing's Kernel Splitting}
\label{se:epstein}
In Epstein et al. \cite{epstein}, the modal Green's functions are computed 
using a Fast Fourier Transform (FFT)-based method, with the 
kernel splitting of Helsing \cite{helsing}.
In the following, the definitions for $m$, $\kappa$, $\alpha$, and $R_0$ are
identical to those used in Section \ref{sec:mgf}.

The authors divide the evaluation of the modal Green's function for Fourier modes 
$-M, -M+1,\dots,M-1,M$
into two cases: 
one where the source and target are well-separated
($0 \leq \alpha \leq 1/1.005$), 
and one where the source and target are close ($1/1.005 \leq \alpha < 1$).

In the former case, the integrand is relatively smooth, and the modal Green's
functions are computed using an $L$-point FFT, obtaining near double precision
accuracy when $L \geq 4|\kappa|$.
When $|\kappa| \leq 256$, a
1024 point FFT is used. The parameter $L$ must be chosen such that $L > 2M$,
but in practical situations $2M$ is usually smaller than the $L$ chosen 
via this heuristic. 

For the near-singular case, $\alpha \approx 1$, the authors follow 
\cite{helsing} by first rewriting (\ref{eq:mgfexp}) as
\begin{align} \label{eq:mgfhelsing}
\hspace{-3em}
G_m(\bm{x},\bm{x}') = \frac{1}{2\pi} \int_{-\pi}^\pi
\frac{\cos (\kappa \sqrt{1 - \alpha \cos \phi})
+ i \sin (\kappa\sqrt{1 - \alpha \cos \phi})} 
{4\pi R_0 \sqrt{1 - \alpha \cos \phi}}
e^{-im\phi} d\phi
\end{align}
(see \cite{helsing} Section 3, formula (9)).
The integrand of (\ref{eq:mgfhelsing}) is split into a smooth sine term, $H^s$, 
and a near-singular 
cosine term, $H^c$, where $H^s$ and $H^c$ are given by
\begin{align}
\hspace{-3em}
H^s(\phi;\kappa,\alpha) = \frac{\sin(\kappa \sqrt{1 - \alpha \cos \phi})}
{ \sqrt{1 - \alpha \cos \phi}},\quad
H^c(\phi;\kappa, \alpha) = \frac{\cos(\kappa \sqrt{1 - \alpha \cos \phi})}
{\sqrt{1 - \alpha \cos \phi}}. 
\end{align}
The Fourier modes of $H^c$ are computed as the linear convolution of the Fourier 
modes of $\cos(\kappa \sqrt{1 - \alpha \cos \phi } )$ and the Fourier modes of
$1/\sqrt{1 - \alpha \cos \phi}$. The Fourier modes 
of $\cos(\kappa \sqrt{1 - \alpha \cos \phi})$ are
computed via the FFT, while the Fourier modes of 
$1/\sqrt{1-\alpha \cos \phi}$ are known to be 
proportional to $Q_{m-1/2} (\chi)$ (see \cite{cohl}), 
where $Q_{m-1/2}$ is the Legendre function 
of the second kind of half-order, with $\chi$ given by 
\begin{align}
\chi= \frac{r^2+r'^2+(z-z')^2}{2rr'} = \frac{1}{\alpha}.
\end{align}
Note that $\chi \approx 1$ when $\alpha \approx 1$ (i.e., when the 
minimum distance between the source and target is very small).
The authors complete their algorithm by computing $Q_{m-1/2}(\chi)$ 
via a recurrence, which has cost that grows as $O(1/\beta_-)$, where
$\beta_- = \Delta/\rho_0$. 
Thus, their recurrence has poor performance for $\chi\approx 1$
(i.e., when the target and source are close). 
We note that a fast algorithm was recently introduced by Bremer in \cite{bremer},
which evaluates $Q_{m-1/2}(\chi)$ 
in constant run-time independent of $m$. 
Bremer's algorithm for evaluating the Legendre function of the second kind
of half-order \cite{bremer} 
is practically useful, not only as an improvement to \cite{epstein}, but as 
an ingredient in a potential $O(1)$ evaluator for an arbitrary mode of the Green's
function for the Laplace equation (see also Section \ref{se:O1Laplace} for an
alternative algorithm).
We are now ready to discuss the total computational cost of Epstein
et al.'s algorithm. Recall that $R = \sqrt{1 - \alpha \cos \phi}$.
After performing the splitting of Helsing, the $\sin(R)/R$ term is evaluated
in $O(L \log L)$ time with the FFT, where $L$ is the maximum of $4\kappa$ and $M$.
The $\cos(R)/R$ term is evaluated as the convolution of 
the Fourier coefficients of the $1/R$ (the Laplace term) and 
Fourier coefficients of $\cos(R)$. 
The Fourier coefficients of $\cos(R)$ are evaluated in $O(L \log L)$ time, and
the coefficients of $1/R$ term are evaluated in $O(1/\beta_-)$ time, 
where $\beta_- = \Delta/\rho_0$.
Lastly, the convolution of the coefficients of $\cos(R)$ and the
coefficients of $1/R$ is evaluated
in $O(\kappa M)$ time.
Finally, we summarize the Epstein et al.'s algorithm for the modal Green's 
function and its cost as
\begin{align}
\hspace{-4.5em}
\underbrace{\mathcal{F}\Bigg(\cos \big(\kappa 
\sqrt{1-\alpha \cos\phi} \big)\Bigg)}_{O(L \log L)}
       \underbrace{ \vphantom{ \Bigg) } \star}_{O(\kappa M)} 
       \underbrace{\mathcal{F} \Bigg( \frac{1}{\sqrt{1-\alpha\cos\phi}} \Bigg)}
       _{O(1/\beta_-)}
        + 
        \underbrace{\mathcal{F}\Bigg( \frac{\sin(\kappa  \sqrt{1 - \alpha\cos\phi})}
             {\sqrt{1 - \alpha\cos\phi}}  \Bigg)}_{O(L \log L)},
\end{align}
where $\star$ is the discrete convolution operator, $\mathcal{F}$ is 
the discrete Fourier transform (with its cost 
denoted by its implementation via the FFT), $L=\max(4\kappa,M)$,
and $\beta_-$ is the scaled minimum source-to-target distance given
by $\beta_-=\Delta/\rho_0$.
Hence, the cost of Epstein et al.'s algorithm for the modal Green's function is 
\begin{align}
O(L \log L) + O(\kappa M) + O(1/\beta_-).
\end{align}
Epstein et al.'s algorithm can be improved by the application of an
$O(1)$ evaluator for the modal Green's function for the Laplace equation, 
resulting in a cost of
\begin{align} \label{eq:epsteinCost1}
O(L \log L) + O(\kappa M).
\end{align}
Lastly, recall from Section \ref{se:numNeed} that the number of Fourier
coefficients needed when the source and target are close is
\begin{align}\label{eq:numNeed2}
M = O\Big(\frac{1}{\beta_-} + \kappa\Big).
\end{align}
\subsubsection{Method of Vaessen, a Modern Implementation of Gedney and Mittra's
Kernel Splitting}
\label{se:vaessen}
In Vaessen et al. \cite{vaessen}, the authors compute the modal Green's functions 
via an FFT-based kernel-splitting method, using the kernel-splitting
of Gedney and Mittra \cite{gedney}.
However, to integrate the non-singular term, they subsequently split
it again (i.e., they perform two splittings).
This second splitting is actually the same splitting which was later used 
by Helsing \cite{helsing}. 
In the summary below, we depart from the authors' notation 
to make it consistent with our summarization of Epstein et al.'s algorithm.
The authors follow \cite{gedney}, and begin by adding and subtracting the 
term $1/\sqrt{1-\alpha \cos \phi }$ to the integrand of 
(\ref{eq:mgfexp}), 
then split 
the integral into a near-singular and a non-singular term,
resulting in the splitting
\begin{align}
\hspace{-2em}
G_m=  \underbrace{ \vphantom{ \cos(m\phi) \Bigg( \frac{e^{-i\kappa \sqrt{1-\alpha \cos \phi} }-1}
             {  \sqrt{1-\alpha \cos \phi} }
    \Bigg) }
\int_0^\pi \frac{\cos(m\phi)}{ \sqrt{1-\alpha \cos \phi} } d\phi}_
{\textstyle g_{m1}} 
 +
 \underbrace{\int_0^\pi 
 \cos(m\phi) \Bigg( \frac{e^{-i\kappa \sqrt{1-\alpha \cos \phi} }-1}
             {  \sqrt{1-\alpha \cos \phi} }
    \Bigg)  d\phi}_{\textstyle g_{m2}},
\end{align}
where the integral corresponding to the $g_{m1}$ term is the near-singular portion,
and the integral corresponding to the $g_{m2}$ term is the non-singular portion.

To compute the $g_{m1}$ term, the authors use a recurrence inspired by the
recurrence published in \cite{gedney}.
The authors improved on \cite{gedney} by reversing the direction
of the recurrence when $\alpha \approx 1$ (i.e., when the source and target are 
close). 
However, we note that the $g_{m1}$ term is the modal Green's function
of the Laplace equation. Because Section \ref{se:epstein} discusses a potential
fast
$O(1)$ evaluator for the Laplace equation, we do not reproduce Vaessen's
method here.

The authors show that directly applying the FFT to $g_{m2}$ is inefficient
when $\alpha \approx 1$. The cost of accurately computing each 
Fourier coefficient
of $g_{m2}$
grows as $O(1/\beta_-)$, where $\beta_-$ is the scaled minimum distance between
the source and the target given by $\beta_-= \Delta/\rho_0$.

To evaluate the $g_{m2}$ term, the authors split the integral again, 
resulting in the splitting
\begin{align}
\label{eq:vaessenmgf}
\hspace{-6.5em}
\begin{split}
g_{m2} &=
\int_0^\pi 
 \cos(m\phi) \Bigg( \frac{e^{-i\kappa \sqrt{1-\alpha \cos \phi} }-1}
             {  \sqrt{1-\alpha \cos \phi} }
    \Bigg)  d\phi\\
\hspace{-4.5em}    
&=  \int_0^\pi
\cos(m\phi) \Bigg( \frac{\cos(\kappa   \sqrt{1-\alpha \cos \phi} )
      + i \sin(\kappa \sqrt{1-\alpha \cos \phi}) -1}
             {  \sqrt{1-\alpha \cos \phi} }
    \Bigg)  d\phi \\
\hspace{-4.5em}    
&= \int_0^\pi \cos(m\phi) \Bigg( 
    \frac{\cos(\kappa   \sqrt{1-\alpha \cos \phi} ) -1 }
    {  \sqrt{1-\alpha \cos \phi} }
\Bigg)
d\phi
 +i \int_0^\pi  \cos(m\phi) \Bigg( \frac{\sin(\kappa   \sqrt{1-\alpha \cos \phi}) }
    {  \sqrt{1-\alpha \cos \phi} }
  \Bigg) d\phi.
\end{split}
\end{align}
Formula (\ref{eq:vaessenmgf}) is almost identical to 
Epstein et al.'s formula for $G_m$ after
the authors applied the splitting of Helsing
(see formula (\ref{eq:mgfhelsing})
in Section \ref{se:epstein}).
Furthermore, both Epstein et al. and Vaessen et al. evaluate 
formula (\ref{eq:vaessenmgf}) by applying the FFT to compute
the Fourier coefficients associated with the cosine and sine 
terms. 
A superficial difference between the two techniques is that Vaessen et al.
use the fact that the Fourier coefficients of 
\begin{align} \label{eq:mgflaplaceV}
\frac{1}{\sqrt{1 -\alpha \cos \phi}}
\end{align}
are identical to the values of $g_{m1}$ (recall 
that this is the definition of the modal Green's function for the Laplace equation),
which they evaluate with a recurrence, while Epstein et al. compute these
Fourier coefficients by evaluating the associated Legendre functions of half order
(see Section \ref{se:epstein}).
Since the modal Green's functions of the Laplace equation are expressible in terms
of associated Legendre functions of half order, the two methods are equivalent.
As we noted in Section \ref{se:epstein}, due to a recent
algorithm by Bremer, each coefficient can be evaluated
easily with a fast algorithm in $O(1)$ time.
The computational cost of Vaessen et al.'s algorithm is the same as Epstein et al.'s
algorithm.

\section{Preliminaries}
\subsection{Chebyshev Polynomials}
The Chebyshev polynomials are a collection of polynomials on the unit
interval $[-1,1]$, denoted by $T_n(x)$, which are orthogonal to the
weight function $1/\sqrt{1-x^2}$. The $n$th Chebyshev polynomial is given by
the formula
  \begin{align}
T_n(x) = \cos(n \arccos(x))
  \end{align}
(see~\cite{abram}). The extension of $T_n(z)$ to the complex plane is given
by the same formula, only with $z$ replacing $x$. 
\subsection{The Chebyshev Polynomials Evaluated on 
the Bernstein Ellipse} \label{se:chebyshevEvalOnBE}
Recall that the $m$th order Chebyshev polynomial with
complex argument, $T_m(z)$, is given by
\begin{align} \label{eq:chebyshev}
T_m(z) = \cos m \theta,
\end{align}
where $\theta = \arccos (z)$.
An equivalent form of (\ref{eq:chebyshev}) is often used for applications
on ellipses (see, for example, \cite{trefethen}), given by
\begin{align} \label{eq:chebyshevTref}
T_m(z) = \frac{w^m - w^{-m}}{2},
\end{align}
where $z = \frac{1}{2}(w + w^{-1})$, for all $z \in \mathbb{C}$.
This form can be conveniently rewritten in terms of the Joukouwski transformation, 
defined as
\begin{align} \label{eq:joukowski}
J(\zeta) = \frac{1}{2} \big(\zeta + \frac{1}{\zeta} \big).
\end{align} 
Rewriting the Chebyshev polynomial 
using (\ref{eq:joukowski}) we have
\begin{align}
T_m(z) = \frac{(J^{-1}(z))^m - (J^{-1}(z))^{-m}}{2}.
\end{align}
This immediately implies the useful formula
\begin{align} \label{eq:chebyofJ}
T_m(J(z)) = \frac{z^m + z^{-m}}{2},
\end{align}
for the composition of the Chebyshev polynomial with the Joukowski transformation.

Let $C_\rho$ denote a circle of radius $\rho$.
The Joukowski transformation of the family of circles $C_\rho$ with $\rho \neq 1$
has special significance in approximation theory 
and are named the Bernstein ellipses, denoted $E_\rho$, given by
\begin{align} \label{eq:bern}
\begin{split}
E_\rho(\theta) = J\big(C_\rho(\theta)\big) = J(\rho e^{i\theta})
= \frac{1}{2} ( \rho e^{i\theta} + \rho^{-1} e^{-i\theta}) \\
  = \frac{1}{2}(\rho \cos \theta + i\rho \sin \theta + \rho^{-1}\cos \theta 
 -i\rho^{-1} \sin \theta),
\end{split}
\end{align}
where we used the standard parametrization of the
circle, $C_\rho(\theta)=\rho e^{i\theta}$.
Note that both $C_\rho$ and $C_{1/\rho}$ under the Joukowski 
transformation yield the same Bernstein ellipse, that is, $E_\rho= E_{1/\rho}$. We 
adopt the convention in the literature (see, for example, \cite{mason, trefethen})
of parameterizing 
the Bernstein ellipses by $\rho > 1$.
Formula (\ref{eq:bern}) can be simplified into the familiar form of an ellipse,
albeit with the minor axis in the complex plane, given by
\begin{align}
E_\rho(\theta) = a \cos \theta + i b \sin \theta,
\end{align}
where
\begin{align}
a = \frac{1}{2}(\rho + \rho^{-1}), \qquad b = \frac{1}{2}(\rho - \rho^{-1}).
\end{align}
Because the Bernstein ellipses are the Joukowski transformations of circles, and
the Chebyshev polynomials can be defined in terms of the inverse of the Joukowski
transformation,
combining (\ref{eq:bern}) and (\ref{eq:chebyofJ}) leads to a formula
for the composition of a Chebyshev polynomial and a Bernstein ellipse, given by
\begin{align} \label{eq:chebyofBern}
T_m\big(E_\rho(\theta)\big) = T_m\big(J(C_\rho(\theta))\big) 
    = \frac{ \rho^m e^{im\theta} + \rho^{-m} e^{-im\theta}}{2}.
\end{align}
Formula (\ref{eq:chebyofBern}) leads to a useful inequality,
\begin{align} \label{eq:chebyInequal}
\frac{1}{2}(\rho^m - \rho^{-m})
\leq |T_m(E_\rho(\theta))| \leq 
 \frac{1}{2}  (\rho^m + \rho^{-m}),
\end{align}
for $\rho > 1$.
\subsection{Recurrence for a Certain Integral  
involving a Monomial Divided by a $\sqrt{a\tau^2+b}$} \label{se:certainintegral}
In Gustafsson (see \cite{gustafsson}, equations (25) and (26)),
a recurrence relation is given for the integral
of an $n$th degree monomial divided by the square root of a pure quadratic,
\begin{align}
\int \frac{\tau^n}{\sqrt{a\tau^2+b}} d\tau =
  \frac{\tau^{n-1}\sqrt{a\tau^2+b}}{na} 
  - \frac{(n-1)^b}{na} \int \frac{\tau^{n-2}}{\sqrt{a\tau^2 +b}}d\tau,
\end{align}
for $n \geq 2$, with the base case given by the formula
\begin{align}
\int \frac{1}{\sqrt{a\tau^2 +b}}d\tau = 
 \frac{1}{\sqrt{a}}  \ln\Big(\tau \sqrt{a} + \sqrt{a\tau^2 +b}\Big),
\end{align}
which is stable when $|b| <|a|$.
\subsection{The Mapping Between a Chebyshev Expansion and a
Taylor Series} \label{se:cheby2taylor}
The following lemma describes the mapping from a Chebyshev expansion to its
corresponding Taylor series. It can be derived in a straightforward way 
from the formulas in~\cite{abram}.

\begin{lemma} \label{le:cheby2taylor}
Suppose that $c_0,c_1,\ldots,c_n \in \R$. Let $a_0,a_1,\ldots,a_n \in \R$
be given by the formula
  \begin{align}
a_0 = \sum_{j=0}^{\floor{n/2}} c_{2j} (-1)^j,
  \end{align}
and
  \begin{align}
a_i = \sum_{j=0}^{\floor{(n-i)/2}} c_{i+2j} \cdot \frac{(i+2j)(-1)^j}{
2i\cdot j! (i+j-1)_{j}},
  \end{align}
for $i=1,2,\ldots,n$, where $(\cdot)_n$ is the falling Pochhammer symbol.
Then
  \begin{align}
\sum_{i=0}^n a_i x^i = \sum_{i=0}^n c_i T_i(x),
  \end{align}
for all $x\in [-1,1]$.

\end{lemma}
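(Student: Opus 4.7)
The plan is to apply the explicit monomial expansion of the Chebyshev polynomials from \cite{abram}. Specifically, for $n \geq 1$,
\begin{align}
T_n(x) = \frac{n}{2} \sum_{k=0}^{\lfloor n/2 \rfloor} \frac{(-1)^k (n-k-1)!}{k!\,(n-2k)!}\,(2x)^{n-2k},
\end{align}
and $T_0(x) = 1$. Substituting this into $\sum_{i=0}^n c_i T_i(x)$ and exchanging the order of summation will produce an expression of the form $\sum_{i=0}^n \tilde{a}_i\, x^i$, where each $\tilde{a}_i$ is a sum over those $c_{i+2j}$ with $2j \leq n - i$. The proof then reduces to verifying that $\tilde{a}_i = a_i$ for every $i$.

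For the constant term, I would first isolate $a_0$ by noting that $T_n(0) = \cos(n\pi/2)$, so $T_{2j}(0) = (-1)^j$ and $T_{2j+1}(0) = 0$. Thus the constant part of $\sum_i c_i T_i(x)$ equals $\sum_{j=0}^{\lfloor n/2\rfloor} c_{2j}(-1)^j$, which matches the stated formula for $a_0$ verbatim. This case has to be treated separately because the monomial expansion formula above is valid only for $n \geq 1$.

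For $i \geq 1$, I would extract the coefficient of $x^i$ from $T_{i+2j}(x)$ by setting $n = i + 2j$ and $k = j$ in the monomial expansion. This yields an expression proportional to $(i+2j)(-1)^j \, (i+j-1)!/(j!\,i!)$ times a power of $2$. Summing over $j$ from $0$ to $\lfloor (n-i)/2 \rfloor$ with weights $c_{i+2j}$ gives the coefficient of $x^i$, and the ratio of factorials can then be rewritten in terms of the falling Pochhammer symbol using $(i+j-1)!/(i-1)! = (i+j-1)_j$, yielding the form stated in the lemma.

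The main obstacle is purely bookkeeping: reconciling the raw factorial expression that arises from the standard Chebyshev monomial formula with the compact Pochhammer form in the lemma, and tracking the powers of $2$ and of $i$ so that the normalizing constant $1/(2i)$ appears correctly. Since no analytical content beyond linear-combination manipulation is required, this simplification is the only step where care is needed.
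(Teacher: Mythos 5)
Your method is the same as the paper's: the paper's entire proof is a one-line appeal to the formulas in \cite{abram}, and your plan (substitute the explicit monomial expansion of $T_n$, exchange the order of summation, and match coefficients of $x^i$) is exactly the intended ``straightforward'' derivation. Your treatment of $a_0$ via $T_n(0)$ is correct, and your extraction of the coefficient of $x^i$ from $T_{i+2j}$, namely $\tfrac{i+2j}{2}(-1)^j\tfrac{(i+j-1)!}{j!\,i!}\,2^i$, is also correct.

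The gap is precisely the step you defer as ``purely bookkeeping.'' Carrying it out, the coefficient of $x^i$ for $i\ge 1$ is
\begin{align}
\sum_{j=0}^{\lfloor (n-i)/2\rfloor} c_{i+2j}\cdot
\frac{(i+2j)\,(-1)^j\,2^{\,i-1}\,(i+j-1)_j}{i\cdot j!},
\end{align}
with the falling factorial $(i+j-1)_j=(i+j-1)!/(i-1)!$ appearing in the \emph{numerator} together with a factor $2^{\,i-1}$. This is not the displayed formula of the lemma, which places $(i+j-1)_j$ in the denominator and carries no power of $2$ beyond the $1/(2i)$. The two do not agree: taking $n=1$ with $c_1=1$, $c_0=0$ gives $\sum_i c_iT_i(x)=x$, so $a_1$ must equal $1$, whereas the stated formula gives $a_1=1/2$; likewise $c_2=1$ alone requires $a_2=2$ but the stated formula gives $1/2$. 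So either the statement as printed contains a typo (likely, since the $a_0$ case checks out and the paper's one-line proof points to the standard expansion), or your assertion that the factorial manipulation ``yields the form stated in the lemma'' is simply false. Either way, the proposal as written cannot be completed into a proof of the displayed identity; the reconciliation you identify as the only delicate step actually had to be performed, and performing it surfaces the mismatch and the corrected formula above.
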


To determine the Taylor series centered at another point on $[-1,1]$, the
following lemma can be used, after applying Lemma~\ref{le:cheby2taylor}.

\begin{lemma}

Suppose that $a_0,a_1,\ldots,a_n \in \R$. Let $b_0,b_1,\ldots,b_n \in \R$
be given by the formula
  \begin{align}
b_i = \sum_{j=i}^n a_j \cdot \binom{j}{i} x_0^{j-i},
  \end{align}
for all $i=0,1,\ldots,n$.  Then
  \begin{align}\label{eq:cheby2taylor}
\sum_{i=0}^n b_i (x-x_0)^i = \sum_{i=0}^n a_i x^i,
  \end{align}
for all $x$.

\end{lemma}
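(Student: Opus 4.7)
The plan is to expand each monomial $x^j$ on the right-hand side of (\ref{eq:cheby2taylor}) in powers of $(x-x_0)$ using the binomial theorem, then swap the order of summation so that the coefficient of $(x-x_0)^i$ appears explicitly. This coefficient will match the stated formula for $b_i$, establishing the identity.

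Concretely, I would start from $\sum_{j=0}^n a_j x^j$ and write $x = (x - x_0) + x_0$, so that by the binomial theorem
\begin{align}
x^j = \bigl((x-x_0) + x_0\bigr)^j = \sum_{i=0}^{j} \binom{j}{i} (x-x_0)^i \, x_0^{\,j-i}.
\end{align}
Substituting this into the right-hand side of (\ref{eq:cheby2taylor}) gives
\begin{align}
\sum_{j=0}^n a_j x^j = \sum_{j=0}^n a_j \sum_{i=0}^{j} \binom{j}{i} (x-x_0)^i \, x_0^{\,j-i}.
\end{align}
Next I would interchange the two summations. Since the inner index $i$ ranges over $0 \leq i \leq j$, swapping yields $0 \leq i \leq n$ outer and $i \leq j \leq n$ inner, producing
\begin{align}
\sum_{j=0}^n a_j x^j = \sum_{i=0}^n (x-x_0)^i \sum_{j=i}^{n} a_j \binom{j}{i} x_0^{\,j-i}.
\end{align}
The inner sum is precisely $b_i$ as defined in the lemma statement, and thus the right-hand side equals $\sum_{i=0}^n b_i (x-x_0)^i$, which is the desired identity.

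There is no genuine obstacle here: the result is a purely algebraic identity between polynomials of degree at most $n$ in $x$, and the equality is established term by term in $(x-x_0)^i$. The only mild bookkeeping concern is to make sure that the rearrangement of the double sum correctly carries the index ranges, but since all sums are finite there is no convergence issue, and the identity holds for every $x \in \mathbb{R}$.
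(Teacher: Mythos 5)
Your proof is correct and complete: expanding $x^j=((x-x_0)+x_0)^j$ via the binomial theorem and interchanging the finite double sum is exactly the standard argument, and the paper itself offers no proof of this lemma (it is stated as an elementary fact), so there is nothing to contrast it with. No gaps.
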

\subsection{Chebyshev Coefficients of Analytic functions} \label{se:chebyanal}
The following theorem states that, if a function $f(z)$ can be analytically
continued to the Bernstein ellipse $E_\rho$, then the decay of the
coefficients of its Chebyshev expansion can be nicely bounded. It can
be found in, for example, Chapter~8 of~\cite{nickapprox}.

\begin{theorem} \label{th:chebyanal}
Suppose that $f(z)$ is a analytic function on a neighborhood of the interior
of the Bernstein ellipse $E_\rho$, where it satisfies $\abs{f(z)} \le M$
for all $z\in E_\rho^o$, for some constant $M>0$. Suppose further that 
\begin{align}
f(z) = \sum_{k=0}^\infty a_k T_k(z),
\end{align}
for all $z \in [-1,1]$, where $T_k(z)$ is the Chebyshev polynomial of order $k$.
Then its Chebyshev expansion
coefficients $a_k$ satisfy
  \begin{align}
\abs{a_k} \le 2M \rho^{-k},
  \end{align}
for all $k\ge 1$.

\end{theorem}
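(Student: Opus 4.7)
The plan is to reduce the Chebyshev expansion of $f$ to a Laurent expansion via the Joukowski map $J(w) = \frac{1}{2}(w + w^{-1})$ from Section~\ref{se:chebyshevEvalOnBE}, and then bound the Laurent coefficients by the classical contour-integral estimate. The point is that $J$ carries the circle $C_\rho = \{|w|=\rho\}$ onto the Bernstein ellipse $E_\rho$, and more generally carries the annulus $\rho^{-1} < |w| < \rho$ biholomorphically onto $E_\rho^o \setminus [-1,1]$, extending continuously across $[-1,1]$. So pulling $f$ back by $J$ yields a function on an annulus, where the natural basis is monomials rather than Chebyshev polynomials.

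Concretely, I would define $g(w) = f(J(w))$ for $w$ in a suitable annulus. Formula~(\ref{eq:chebyofJ}) gives $T_k(J(w)) = \tfrac{1}{2}(w^k + w^{-k})$, so substituting the Chebyshev series of $f$ produces
\begin{align}
g(w) \;=\; a_0 \;+\; \sum_{k=1}^\infty \tfrac{a_k}{2}\bigl(w^k + w^{-k}\bigr),
\end{align}
which is the Laurent expansion of $g$ about the origin, symmetric under $w \mapsto w^{-1}$. Because $f$ is analytic on a neighborhood of $E_\rho^o$ and bounded there by $M$, the pullback $g$ is analytic on the open annulus $\rho^{-1} < |w| < \rho$ and satisfies $|g(w)| \le M$ on that annulus (the two-sheeted nature of $J$ is harmless since both preimages lie in the same bounded region). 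Cauchy's formula on the circle $|w| = \rho'$ for any $\rho' < \rho$ then gives, for $k \ge 1$,
\begin{align}
\frac{a_k}{2} \;=\; \frac{1}{2\pi i} \oint_{|w|=\rho'} \frac{g(w)}{w^{k+1}}\, dw,
\end{align}
from which the standard modulus estimate produces $|a_k| \le 2M (\rho')^{-k}$. Letting $\rho' \nearrow \rho$ yields the claimed bound $|a_k| \le 2M\rho^{-k}$.

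The main obstacle is justifying the two analytic-continuation facts used above: first, that $g$ really is single-valued and analytic on the full annulus $\rho^{-1} < |w| < \rho$ (not just on the annulus minus the preimages of $[-1,1]$); and second, that the Chebyshev series of $f$, which a priori is only asserted on $[-1,1]$, can be substituted to give the Laurent series of $g$ on that whole annulus. For the first point, $J$ is two-to-one on the annulus with $J(w) = J(1/w)$, and since $f$ is analytic on a neighborhood of $E_\rho^o$, the pullback extends analytically across $[-1,1]$; the resulting $g$ is automatically invariant under $w \mapsto 1/w$. For the second point, both sides are analytic functions on the annulus that agree on the unit circle (which maps to $[-1,1]$), so they agree everywhere by the identity theorem, which also forces the symmetry $b_{-k} = b_k$ of the Laurent coefficients and identifies them with $a_k/2$. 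Once these domain issues are handled, the coefficient bound is immediate from the contour estimate.
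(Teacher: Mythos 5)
Your argument is correct and is precisely the standard proof of this bound that the paper invokes by citation (Chapter~8 of Trefethen's \emph{Approximation Theory and Practice}): transplant $f$ to the annulus $\rho^{-1}<|w|<\rho$ via the Joukowski map, identify the Chebyshev coefficients with the symmetric Laurent coefficients of $g=f\circ J$, and apply the Cauchy estimate on circles $|w|=\rho'$ before letting $\rho'\nearrow\rho$. The domain issues you flag (single-valuedness of $g$ across the unit circle and the identification of the two series) are handled exactly as you describe, so nothing is missing.
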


\subsection{Contour Integral of a Monomial Divided by a First Degree Polynomial}
\label{se:contourpoly}
For any $k \geq 0$, note the elementary indefinite integral
\fboxit{
\begin{align}
\int_\gamma \frac{z^k}{z-x}dz = \sum_{i=0}^{k-1} \frac{z^{k-i}x^i}{k-i} +
x^k \log(z-x),
\end{align}
}
for all $x \in \mathbb{C}$
\subsection{The Numerical Solution of the Quadratic Equation}
\label{se:quadraticeq}
Suppose that $a,b,c\in \R$, and suppose that the quadratic equation
  \begin{align}
a x^2 + b x + c = 0
  \end{align}
has two distinct roots. The roots are given by either the formula
  \begin{align}
x = \frac{-b \pm \sqrt{b^2 -4ac}}{2a},
    \label{quadrat1}
  \end{align}
or, alternatively, by
  \begin{align}
x = \frac{2c}{-b \mp \sqrt{b^2 -4ac}},
    \label{quadrat2}
  \end{align}
where the root $x_1$ corresponding to the $+$ in~(\ref{quadrat1}) is the root
corresponding to $-$ in~(\ref{quadrat2}), and the root $x_2$ corresponding
to the $-$ in~(\ref{quadrat1}) is the root corresponding to $+$
in~(\ref{quadrat2}). To avoid cancellation error in the numerical evaluation
of the roots, the formula should be chosen based on the sign of $b$. For
example, if $x_1$ is sought, then formula~(\ref{quadrat1}) should be used
when $b < 0$; if $b \ge 0$, then formula~(\ref{quadrat2}) should be used.
\section{Analytical Apparatus}
\subsection{Steepest Descent Contour}
\label{se:steepest}
The modal Green's function is given by 
\begin{align} \label{eq:mgf}
G_m(\bm{x},\bm{x}')=\frac{1}{8\pi^2R_0}\int_{-\pi}^{\pi} 
\frac{e^{-i\kappa\sqrt{1 - \alpha\cos\phi}}} 
{\sqrt{1 - \alpha\cos\phi}} e^{-im\phi}d\phi ,
\end{align}
where $\kappa = kR_0$, $\alpha = 2rr'/R_0^2$, and $R_0^2=r^2+r'^2+(z-z')^2$.
Recall that (\ref{eq:mgf}) is the $m$th Fourier coefficient of the spherical wave
\begin{align} \label{eq:hw}
H^w(\phi) =  \frac{e^{-i\kappa\sqrt{1 - \alpha\cos\phi}}}
{\sqrt{1 - \alpha\cos\phi}} .
\end{align}
Observe that (\ref{eq:hw}) is an even function. 
Rewriting (\ref{eq:mgf}) using (\ref{eq:hw}) and applying the formula for the
$m$th Fourier coefficient of an even function, we have
\begin{align} \label{eq:mgfcos}
G_m = \frac{1}{8\pi^2R_0} \int_{-\pi}^\pi H^w(\phi)e^{-iw\phi} d\phi =
 \frac{1}{4\pi^2R_0} \int_{0}^\pi H^w(\phi) \cos(\phi) d\phi ,
\end{align}
where we have 
omitted rewriting the variables $\bm{x}, \bm{x'}$. In the form (\ref{eq:mgfcos}),
$G_m$ is understood to be a function
of four parameters: $R_0, \alpha, \kappa,$ and $m$.
Lastly, we denote the integrand of (\ref{eq:mgfcos}) by $H_m$, where
\begin{align} \label{eq:hmz}
H_m(\phi) = H^w(\phi) \cos(m\phi).
\end{align}

This leads to an abbreviated form of $G_m$, given by
\fboxit{
\begin{align}
G_m = \int_0^\pi H_m(\phi)d\phi.
\end{align}
}
When $\kappa$ or $m$ are large, $H_m(\phi)$ is highly oscillatory along the real axis. 
However, $H_m(\phi)$ decays to zero in quadrant~IV of the complex plane
for complex arguments with 
sufficiently large positive imaginary components, provided 
that $0 < \Re(\phi) < \pi$.
This suggests that contour integration may be used to avoid evaluating
the oscillatory segment along the real axis.
The integrand is analytic on a neighborhood of $[0,\pi]$, 
so Cauchy's integral theorem can 
be used to deform the integration contour to complex valued $\phi$. 

Applying Cauchy's integral theorem,  we have
\begin{align} \label{eq:mgfcauchy}
\oint_\Gamma H_m(z) dz = 0,
\end{align}
where $\Gamma$ is some closed contour passing along the interval $[0,\pi]$ on
the real axis,
and extending into quadrant~IV in the complex plane. 
We rearrange (\ref{eq:mgfcauchy}) into an
expression for $G_m$, given by
\begin{align} \label{eq:mgfcauchysetminus}
G_m = -\oint_{\Gamma \setminus [0,\pi]} H_m(z)dz.
\end{align}
Determining an appropriate contour $\Gamma \setminus [0,\pi]$ 
is the subject of the subsequent section.
Ideally, one would construct a contour on which $H_m(z)$ undergoes
a finite number of oscillations independent of both $\kappa$ and $m$.
Unfortunately, this is not possible for the general case when
both $\kappa>0$ and $m>0$.
Although it is not always possible to construct a contour on which $H_m(z)$ 
(given by formula (\ref{eq:hmz}))
has a finite number of oscillations, it is always possible to construct a 
contour on which the spherical wave component $H^w(\phi)$ (given by (\ref{eq:hw}))
has exactly
one oscillation, regardless of $\kappa$, $\alpha$, or $R_0$. 
\subsubsection{Gustafsson's Contours} \label{se:gustafsson}
Gustafsson \cite{gustafsson} proposed using contour integration to evaluate
the modal Green's functions by selecting
a contour on which the spherical wave component (\ref{eq:hw})
is non-oscillatory. 
The spherical wave component with complex argument is given by
\begin{align} \label{eq:hwz}
H^w(\phi) =  \frac{e^{-i\kappa\sqrt{1 - \alpha\cos \phi}}}
{\sqrt{1 - \alpha\cos \phi}}, 
\end{align}
where $\phi \in \mathbb{C}$, with $\alpha$ and $\kappa$ defined the same as in
(\ref{eq:mgf}).
Recall that our goal is to construct a contour $\Gamma \setminus [0,\pi]$ which begins
at the point $\phi=0$, travels down into the complex plane sufficiently low,
traverses parallel to the real axis, then travels up  
to the point
$\phi=\pi$.
An adequate contour has the property that it decays (or grows) monotonically during
the first and last segments, which we name $\gamma_1$ and $\gamma_2$ respectively.
The contour parallel to the real axis connecting $\gamma_1$
and $\gamma_2$, corresponds to an integral which by design evaluates to zero. 
We assign this segment the label $\gamma_c$ for ``connecting."
Because it is noncontributory we do not 
derive its expression. We split the integral in (\ref{eq:mgfcauchy}) into 
\begin{align} \label{eq:mgfsplitup}
\int_{[0,\pi]} H(\phi)d\phi
+\int_{\gamma_1} H(\phi)d\phi
+\int_{\gamma_c} H(\phi) d\phi + \int_{\gamma_2} H(\phi) d\phi =0.
\end{align}
We then combine (\ref{eq:mgfsplitup}) with  (\ref{eq:mgfcauchysetminus}) to  
obtain the formula
\begin{align} \label{eq:mgftwocontours}
G_m=-\int_{\gamma_1} H(\phi)d\phi
- \int_{\gamma_c} H(\phi) d\phi
- \int_{\gamma_2} H(\phi) d\phi ,
\end{align} 
where $\gamma_1$, $\gamma_2$ are constructed below, with $\gamma_c$
as a contour connecting $\gamma_1$ and $\gamma_2$.

To construct $\gamma_1$, we choose a curve which intersects $\phi=0$ on which
$H^w(z)$ does not oscillate.
This occurs when $\Re(1 - \alpha \cos z )$ is constant.
Because $\gamma_1$ must intersect $x=\phi$, this contour is defined by
\begin{align} \label{eq:gamma1}
\gamma_1 = \{(x,y): \Re( \sqrt{1 - \alpha \cos (x + iy) )}=
 \sqrt{1 - \alpha} ,\quad y >0 \},
\end{align}
To convert (\ref{eq:gamma1}) into a parametric equation, we perform a change of
variables $\cos \phi = x+ iy$, giving the equation for $\gamma_1$,  
\begin{align} \label{eq:realcontour}
\Re( \sqrt{1 - \alpha (x + iy) })=\sqrt{1 - \alpha}. 
\end{align}
Recall that the formula for the square root of a complex number with negative 
imaginary part is 
\begin{align} \label{eq:negsqrtcomplex}
\sqrt{a +ib} = \sqrt{\frac{\sqrt{a^2+b^2}+a}{2}} -i\sqrt{\frac{\sqrt{a^2+b^2}-a}{2}}.
\end{align}
We solve (\ref{eq:realcontour}) by substituting for the left hand side 
the formula (\ref{eq:negsqrtcomplex}), then squaring both sides,
giving the equation 
\begin{align} \label{eq:realcontoursq}
\sqrt{(1-\alpha x)^2 + \alpha^2 y^2} + 1 - \alpha x = 2(1 - \alpha).
\end{align}
We further simplify (\ref{eq:realcontoursq}) by subtracting $(1-\alpha x)$ from 
both sides, and then square both sides. After solving for $x$,
(\ref{eq:realcontoursq}) becomes
\begin{align} \label{eq:contourparametric}
x = \frac{y^2}{4(1/\alpha - 1)} + 1. 
\end{align}

To construct $\gamma_2$, we choose a contour in a similar fashion,
except that 
$\gamma_2$ must intersect the point $\phi = \pi$.
The same procedure used to arrive at (\ref{eq:contourparametric}) results in
an equation 
for $\gamma_2$ in the $\cos\phi$-plane, given by 
\begin{align} \label{eq:contourparametric2}
x = \frac{y^2}{4(1/\alpha + 1)} - 1. 
\end{align}
Integration on these contours requires the change of variables $z = x+iy = \cos\phi$.
Thus, $dz = -\sin \phi d\phi = -\sqrt{1-z^2}$. 
Recalling that the Chebyshev polynomial of the first kind has the formula 
\begin{align}
T_m(z) = \cos(m\arccos(z))  ,
\end{align}
the $\cos{m\phi}$ term with the above substitution becomes $T_m(z)$.
It is not difficult to show that for $\Re(\kappa) > 0$ the integrand vanishes
as $\Im(z) \to +\infty$ provided that $0\leq\Re(\phi) \leq \pi$. 
Thus, if we construct $\gamma_1$ and $\gamma_2$ to
travel sufficiently high into the complex plane, we have 
\begin{align} \label{eq:mgfconnect}
\int_{\gamma_c} H(z)dz \to 0,
\end{align}
where $\gamma_c$ is the contour connecting $\gamma_1$ and $\gamma_2$.
After this change of variables, we arrive at a formula for $G_m$ where the
integrand has a non-oscillatory spherical wave component, given by
\begin{align} \label{eq:mgfz}
\hspace{-3em}
G_m =
\int_{\gamma_1} \frac{e^{-i\kappa \sqrt{1 - \alpha z}} }
  {\sqrt{1-\alpha z} \sqrt{1-z^2} } T_m(z) dz +
\int_{\gamma_2} \frac{e^{-i\kappa \sqrt{1 - \alpha z}}}
  {\sqrt{1-\alpha z} \sqrt{1-z^2} } T_m(z)  dz,
\end{align}
where we have used (\ref{eq:mgfconnect}) to omit the integral
corresponding to $\gamma_c$.

Our formula (\ref{eq:mgfz}) departs from the form given in
\cite{gustafsson} (see \cite{gustafsson}, formula (19))
in that (\ref{eq:mgfz}) is a
formula for all $m$, while the formula appearing in \cite{gustafsson} is for
the special case $m=1$. 
Although the integrand in (\ref{eq:mgfz}) has a spherical wave component which
monotonically decays on $\gamma_1$ and $\gamma_2$, the rest of the 
integrand oscillates and grows along $\gamma_1$ and $\gamma_2$. In the subsequent
section, we characterize the growth, oscillation, and sign behavior
of the integrand on these contours. 
We then demonstrate that this results in concomitant cancellation
error from integrating the form in (\ref{eq:mgfz}).
\subsubsection{Cancellation Error on Gustafsson's Contours}
\label{se:cancelgustafsson}
We consider the integrand
as the product of three terms, $H^w(z), T^m(z)$, and $H^r(z)$, with
$H^w$ and $H^r$ given as  
\begin{align}
H^w(z) =  \frac{e^{-i\kappa \sqrt{1 - \alpha z}}}
  {\sqrt{1-\alpha z}}, \qquad
  H^r(z) = \frac{1}{\sqrt{1-z^2}}.
\end{align}
On both contours $\gamma_1$ and $\gamma_2$, for points distant from the real axis
(with large imaginary component), the exponential term in $H^w(z)$ decays 
far faster than $T_m(z)$ grows,
meaning the integrand decays to zero as $\Im(z) \to +\infty$.
However, for points on $\gamma_1$ and $\gamma_2$ near the real axis, $T_m(z)$ can
be far larger than $1/H^w(z)$, meaning that the integrand takes on values
with large magnitude, particularly when evaluating the modal Green's function
for large values of $m$ and small values of $\kappa$. 

Being the Fourier coefficient of an analytic function, 
$G_m$ exhibits geometric decay in $m$ but equals
the sum of two integrals, each of which exhibit geometric growth in $m$.
We summarize this behavior with the formula 
\begin{align} \label{eq:cancelerror}
O(a^{-m}) \approx G_m 
    = \int_{\gamma_1} H(z)dz + \int_{\gamma_2}H(z)dz
      \approx O(a^m) + O(a^m),
\end{align}
which is only possible if the integrals have opposite sign. 
Therefore, integrating the form in (\ref{eq:mgfz}) incurs cancellation error
which grows geometrically with $m$.
\subsection{Rational Function Approximation of the Chebyshev Polynomial}
\label{se:ratfunapprox}
Integration of (\ref{eq:mgfz}) incurs cancellation error which grows geometrically
in $m$, due to the growth of the Chebyshev polynomial away from the 
real axis. In this section, we characterize its growth, then propose a rational
function approximation which approximately equals the Chebyshev polynomial
on the interval $[-1,1]$ but instead decays in the complex plane.
\subsubsection{The Growth of the Chebyshev Polynomial in the Complex Plane}
It is helpful to characterize the growth of the Chebyshev polynomial in 
the complex plane.
Recall that the formula for the Bernstein ellipse indexed by parameter $\rho$ is
\begin{align}
E_\rho(\theta) = a \cos \theta + i b \sin \theta,
\end{align}
where
\begin{align}
a = \frac{1}{2}(\rho + \rho^{-1}), \qquad b = \frac{1}{2}(\rho - \rho^{-1}).
\end{align}
Recall that (\ref{eq:chebyInequal}) provides a useful bound
\begin{align} \label{eq:chebyBernIn2}
\frac{1}{2}(\rho^m - \rho^{-m})
\leq |T_m(E_\rho(\theta))| \leq 
 \frac{1}{2}  (\rho^m + \rho^{-m}),
\end{align}
%
%
characterizing the growth of $T_m(E_\rho(\theta))$.  
Note that (\ref{eq:chebyBernIn2}) can be immediately extended to any point 
$z$ in the interior of the Bernstein ellipse $E_\rho$. Thus,
\begin{align} \label{eq:chebyBernIn3}
\frac{1}{2}(\rho^m - \rho^{-m})
\leq |T_m(z))| \leq 
 \frac{1}{2}  (\rho^m + \rho^{-m}),
\end{align}
for all $z \in E_\rho^\circ$, where $E_\rho^\circ \in \mathbb{C}$ denotes
the interior of the
region bounded by $E_\rho$.
\subsubsection{Choice of the Bernstein Ellipse Parameter $\rho$ for an $m$th
order Chebyshev Polynomial} \label{se:bernsteinrule}
Recall that by convention, the parameter $\rho > 1$. 
Hence, by (\ref{eq:chebyBernIn3}), 
\begin{align} \label{eq:chebyBernIn4}
|T_m(z)| \leq \frac{1}{2} (\rho^m + \rho^{-m} )< \rho^m,
\end{align}
for all $z \in E_\rho^\circ$.
Thus, to bound the $m$th order Chebyshev polynomial by an arbitrary 
constant $M$, we pick $\rho$ with the formula,
\begin{align} \label{eq:rhoChoice}
\rho = M^{\frac{1}{m}},
\end{align}
which by (\ref{eq:chebyBernIn4}) bounds $|T_m(z)| < M$ for $z \in E_{\rho}^\circ$,
where $E_{\rho}^\circ \in \mathbb{C}$ denotes
the interior of the
region bounded by $E_{\rho}$.
\subsubsection{Rational Function Approximation of the Chebyshev Polynomial via the
Cauchy Integral Formula} \label{se:ratfuncon}
In this section, we construct a rational function approximation which
is approximately equal to $T_m(z)$ on the interval $[-1,1]$, but, instead of
exhibiting polynomial growth in the complex plane, decays.

The Chebyshev polynomial, $T_m(z)$, is analytic everywhere in the complex plane.
Thus, by Cauchy's integral formula
\begin{align} \label{eq:cif}
T_m(z) = \frac{1}{2\pi i} \oint\limits_{\Gamma} \frac{T_m(v)}{v-z} dv,
\end{align}
where $\Gamma$ is any simple closed contour, and $z$ is a point in the 
interior of $\Gamma$.
Let $\Gamma$ be a Bernstein ellipse with parameter $\rho$, denoted by $E_\rho$. 
Then (\ref{eq:cif}) is given by
\begin{align} \label{eq:cifEr}
T_m(z) = \frac{1}{2\pi i} 
\int\limits_{0}^{2\pi } \frac{T_m(E_\rho(\theta)) E_\rho'(\theta)}
{E_\rho(\theta)-z} d\theta.
\end{align}
Suppose that the integral in (\ref{eq:cifEr}) can be efficiently estimated with 
a quadrature rule, given by the nodes $\theta_1, \theta_2, \dots , \theta_n$ 
and weights $w_1, w_2, \dots, w_n$. 
Then, $T_m(z) \approx R_m(z)$, where
\begin{align} \label{eq:rfapproxT}
R_m(z) = \frac{1}{2\pi i} \sum_{i=1}^n
\frac{T_m(E_\rho(\theta_i))E_\rho'(\theta_i)}
{ E_\rho(\theta_i)- z }  w_i.
\end{align}
Recall from Section \ref{se:chebyshevEvalOnBE} that 
\begin{align} 
T_m\big(E_\rho(\theta)\big) = T_m\big(J(C_\rho(\theta))\big) 
    = \frac{ \rho^m e^{im\theta} + \rho^{-m} e^{-im\theta}}{2}.
\end{align}
Thus, we rewrite (\ref{eq:rfapproxT}) as
\begin{align} \label{eq:tmrff}
R_m(z) = \frac{1}{2\pi i} \sum_{i=1}^n \frac{a_i}{v_i - z} ,
\end{align}
where
\begin{align}
a_i = \big( \frac{ \rho^m e^{im\theta_i} 
        + \rho^{-m} e^{-im\theta_i}}{2} \big) E_\rho'(\theta_i)
        w_i, \qquad v_i = E_\rho(\theta_i),
\end{align}
for $i =1, 2, \dots, n$.
\subsection{The Number of Terms in the Chebyshev Expansions of Analytic Functions}
\label{sec:numterms}
The following theorem states that
the number of Chebyshev polynomials required
to represent $f(z)$ which is $\leq L$ and analytic on the interior of $E_\rho$,
with $\rho= M^{1/m}$,
can be bounded in terms of $M$ and $L$.

\begin{corollary}\label{co:approx}
Suppose that $M > 1$, and let $\rho = M^{1/m}$, for some integer $m> 1$.
Suppose further that $f(z)$ is an analytic function on the interior of 
the Bernstein ellipse $E_\rho$, where it satisfies $\abs{f(z)} \le M$
for all $z\in E_\rho^o$, for some constant $L>0$. 
Suppose further that 
\begin{align}
f(z) = \sum_{k=0}^\infty a_k T_k(z),
\end{align}
for all $z \in [-1,1]$, where $T_k(z)$ is the Chebyshev polynomial of order $k$.
Finally, let $0<\epsilon
\ll 1$ be some small real number. Then, if
  \begin{align}\label{eq:chebycoeff}
k_0 = m{(\log(2L) - \log(\epsilon))/\log(M)},
  \end{align}
then $\abs{a_k} \le \epsilon$ for all $k\ge k_0$.

\end{corollary}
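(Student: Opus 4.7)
The plan is to obtain this as a direct corollary of Theorem \ref{th:chebyanal}, which already bounds each Chebyshev coefficient by a geometrically decaying expression in $\rho$; the only work remaining is to invert this decay to find the least $k$ for which the bound drops below the tolerance $\epsilon$, and then to translate the answer from $\rho$ back to the natural parameters $M$ and $m$ using $\rho = M^{1/m}$.

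First, I would note that, as stated, the hypothesis is slightly garbled (the text says $|f(z)| \le M$ but introduces the constant $L > 0$ without using it); I will read the intended hypothesis as $|f(z)| \le L$ on $E_\rho^o$, which is what is consistent with the claimed $k_0$ formula involving $\log(2L)$. Under this hypothesis, Theorem \ref{th:chebyanal} applies directly and yields
\begin{align}
\abs{a_k} \le 2L\, \rho^{-k}
\end{align}
for every $k \ge 1$.

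Next I would ask for the smallest $k$ such that $2L\rho^{-k} \le \epsilon$. Taking logarithms this is equivalent to $k \log \rho \ge \log(2L) - \log(\epsilon)$, i.e.
\begin{align}
k \ge \frac{\log(2L) - \log(\epsilon)}{\log \rho}.
\end{align}
Since $\rho = M^{1/m}$ gives $\log \rho = \log(M)/m$, this lower bound becomes precisely
\begin{align}
k \ge m\,\frac{\log(2L) - \log(\epsilon)}{\log(M)} = k_0,
\end{align}
which is the stated threshold. Combining with the decay estimate above, $|a_k| \le \epsilon$ for all $k \ge k_0$, as claimed.

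There is no substantive obstacle here; the argument is a one-line bookkeeping consequence of Theorem \ref{th:chebyanal}. The only subtlety worth flagging in the write-up is the $M$ vs.\ $L$ issue in the hypothesis, and the mild care that $M > 1$ (hence $\log M > 0$ and $\rho > 1$) so that the division by $\log M$ and the application of Theorem \ref{th:chebyanal} are both legitimate.
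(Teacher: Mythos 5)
Your proposal is correct and is exactly the "straightforward" derivation from Theorem~\ref{th:chebyanal} that the paper's one-line proof alludes to: apply the coefficient bound $\abs{a_k}\le 2L\rho^{-k}$, solve $2L\rho^{-k}\le\epsilon$ for $k$, and substitute $\log\rho=\log(M)/m$. Your observation that the hypothesis should read $\abs{f(z)}\le L$ (not $M$) is also right; that is a typo in the statement.
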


\begin{proof}
The proof follows in a straightforward way from Theorem~\ref{th:chebyanal}.

\end{proof}

Clearly, $k_0 = O(m)$. If, for example, $M=100$, $L=2$, and $\epsilon =
10^{-16}$, then the analytic function $f(z)$, bounded by $L$ in $E_\rho^o$,
could be approximated by a Chebyshev expansion with only $k_0 \approx {8.3
m}$ terms.

\begin{remark} \label{re:trefanalcont} 
We point out, without proving in detail, that this corollary extends to
analytic functions on contours in the complex plane. Suppose that $f(z)$ is
defined on a contour $C$ of length $2$, and can be analytically continued
onto some neighborhood $\Omega$ of $C$, where it stays nicely bounded.
Suppose that the nearest points on $\partial \Omega$ to the ends of $C$ are
at a distance approximately $(\log(M)/m)^2$ away, and the nearest points to
the middle of $C$ are approximately $\log(M)/m$ away. If the curve is quite
smooth, then the arc length parameterization $z(s)\colon [-1,1]\to C$ of $C$
is a conformal mapping from a neighborhood of $[-1,1]$ to a neighborhood of
$C$. If we construct a Bernstein ellipse with $\rho=M_2^{1/m}$ around the
interval $[-1,1]$ in the arc length parameter, then the distance from
$\partial E_\rho$ to the ends will be $(\log(M_2)/m)^2$, and the distance to
the middle will be $\log(M_2)/m$ (see Section~\ref{se:geometrybern}). 
For some $M_2 \approx
M$, the image of that ellipse will be inside $\Omega$.  Thus, the function
$f(z(s))$ will be representable by an $O(m)$-term Chebyshev expansion by 
Corollary~\ref{co:approx}.  In fact, the number of terms will also be given
by formula~(\ref{eq:chebycoeff}).

\end{remark}

\subsection{The Geometry of the Bernstein Ellipse} \label{se:geometrybern}
Recall from Section \ref{se:bernsteinrule} that, for the $m$th order
Chebyshev polynomial, we choose the Bernstein ellipse parameter $\rho$
using the formula
\begin{align}
\rho = M^{\frac{1}{m}},
\end{align}
where $M > 1$ is an arbitrary constant.
In this section, we demonstrate that the distances from Gustafsson's contours
to their intersections with $E_\rho$ are well-behaved.  
\subsubsection{Approximations for the Major and Minor Axes as a Function of $m$}
Recall from Section \ref{se:chebyshevEvalOnBE}
that the axes of the Bernstein ellipse are given by
\begin{align}
a = \frac{1}{2} \big(\rho + \frac{1}{\rho}\big), \qquad 
b = \frac{1}{2} \big(\rho - \frac{1}{\rho}\big),
\end{align}
where $a$ is the semi-major axis (along the real axis)
and $b$ is the semi-minor axis (along the imaginary axis).
For convenience we analyze the case
where $M=e$, giving  $\rho = e^{1/m}$. 
The Taylor expansion of $\rho$ is
\begin{align}
\rho = 1 + \frac{1}{m} + \frac{1}{m^2 2!} + O(\frac{1}{m^3})
\end{align}
Consequently,
\begin{align}
\frac{1}{\rho} =\frac{1}{ 1 + \frac{1}{m} + \frac{1}{m^2 2!} + O(\frac{1}{m^3})} \, .
\end{align}
Recall the formulae for the geometric series for $0 \le |r| < 1$,  
\begin{align} \label{eq:geometricseries}
\hspace{-2em}
\frac{1}{1-r} = 1 + r + r^2 + r^3\cdots , \qquad  
  \frac{1}{1+r} = 1 - r + r^2 - r^3,\cdots
\,\,.
\end{align}
Rewriting $1/\rho$ using the formula for a  
geometric series, we have that 
\begin{align}
\frac{1}{\rho} = 1 - \frac{1}{m} + \frac{1}{2m^2} + O\big(\frac{1}{m^3} \big).
\end{align}
Hence, substituting the Taylor expansions of $\rho$ and $1/\rho$ for the semi-major
and semi-minor axes, we have that
\begin{align} \label{eq:taylora_m}
a = \frac{1}{2}\big(\rho + \frac{1}{\rho}\big) = 
    1 + \frac{1}{2m^2} +  O\big(\frac{1}{m^3}\big).
\end{align}
Likewise, the minor axis is  
\begin{align} \label{eq:taylorb_m}
b = \frac{1}{2}\big(\rho - \frac{1}{\rho}\big) = 
     \frac{1}{m}  + O\big(\frac{1}{m^3}\big).
\end{align}
\subsubsection{The Distances from the Points $z=1$ and $z=-1$ to 
the Bernstein Ellipse as a Function of $m$}
\label{se:geomdist}
Recall that Gustafsson's two contours have origins located at $z=-1$ and $z=1$, 
which are the foci of the Bernstein ellipses.
For each focus, we are interested in two quantities: the quantity 
$a-1$, where $a$ is the semi-major axis, and the $y$-coordinate of the intersection
of the line $x=1$ with the Bernstein ellipse $E_\rho$.
Formula (\ref{eq:taylora_m}) immediately yields
\begin{align}
a - 1 = \frac{1}{2m^2}  + O(\frac{1}{m^3}).
\end{align}
The intersection point of $x=1$ with $E_\rho$
is approximated by substituting the Taylor series expansions of the semi-major
and semi-minor axes into the formula for the Bernstein ellipse, and solving 
for resulting $y$.

Recall the formula for the ellipse,
\begin{align}
\frac{x^2}{a^2} + \frac{y^2}{b^2} = 1.
\end{align}
Substituting in the Taylor expansions from Section \ref{se:geometrybern}
for the semi-major and semi-minor axes, we have that
\begin{align}
\frac{x^2}{\big(1 + \frac{1}{2m^2}+ O(\frac{1}{m^3})\big)^2} +
\frac{y^2}{\big(\frac{1}{m} + O(\frac{1}{m^3})\big)^2} = 1.
\end{align}
Setting $x^2 = 1$, we arrive at an equation for $y$, given by
\begin{align}
\frac{1}{\big(1 + \frac{1}{2m^2}+ O(\frac{1}{m^3})\big)^2} +
\frac{y^2}{\big(\frac{1}{m} + O(\frac{1}{m^3})\big)^2} = 1.
\end{align}
\begin{align}
\frac{m^4}{\big(m^2 + \frac{1}{2} + O(\frac{1}{m})\big)^2} +
\frac{y^2m^2}{\big(1 + O(\frac{1}{m^3})\big)^2} = 1.
\end{align}
We then solve for $y$.
\begin{align} \label{eq:y^2O}
\begin{split}
\frac{y^2}{\big(1 + O(\frac{1}{m})\big)^2} 
&=\frac{1}{m^2}-\frac{m^2}{\big(m^2 + 1+ O(\frac{1}{m})\big)^2} \\
\frac{y^2}{\big(1 + O(\frac{1}{m})\big)^2} &=\frac{1}{m^2}-\frac{1}{m^2} \frac{1}
    {\big(1 + \frac{1}{m^2} + O(\frac{1}{m^3}) \big)^2}\\
\frac{y^2}{\big(1 + O(\frac{1}{m})\big)^2} &=
    \frac{1}{m^2}-\frac{1}{m^2} \Big(1 -\frac{1}{m^2} + O(\frac{1}{m^3})  \Big)^2\\  
\frac{y^2}{\big(1 + O(\frac{1}{m})\big)} &=
    \frac{1}{m^2}-\frac{1}{m^2} \Big(1 -\frac{2}{m^2}
    + O\big(\frac{1}{m^3} \big)  \Big)\\    
\frac{y^2}{\big(1 + O(\frac{1}{m})\big)} &=
    \frac{1}{m^2}-\frac{1}{m^2}  +\frac{2}{m^4} + O\Big(\frac{1}{m^5}\Big)\\
y^2&=
     \bigg( \frac{2}{m^4} + O\Big(\frac{1}{m^5}\Big) \bigg)
    \bigg( (1 + O(\frac{1}{m})\bigg)\\
y^2&=
\sqrt{\frac{2}{m^4} }  \sqrt{1 + O(\frac{1}{m})}
\end{split}
\end{align}
Recall that the Taylor series of $\sqrt{1 +x}$ is
\begin{align} \label{eq:sqrt1+x}
\sqrt{1 +x} = 1 + x\frac{1}{2} - x^2\frac{1}{8} +\cdots \,.
\end{align}
Substituting (\ref{eq:sqrt1+x}) into (\ref{eq:y^2O}), we have that
\begin{align}\label{eq:yheight}
y &=   \frac{\sqrt{2}}{m^2} \big(1 + O(\frac{1}{m}) \big)\\
y &=   \frac{\sqrt{2}}{m^2} + O(\frac{1}{m^3}) \label{eq:orderY}
\end{align}
where we have used the formula for a geometric series.
Hence, the vertical distance from $|z|=1$ to the Bernstein ellipse is on
the order of $1/m^2$ (see Figure \ref{fig:geom1}).
\begin{figure}[h]
\centering
\includegraphics[width=0.45\textwidth]{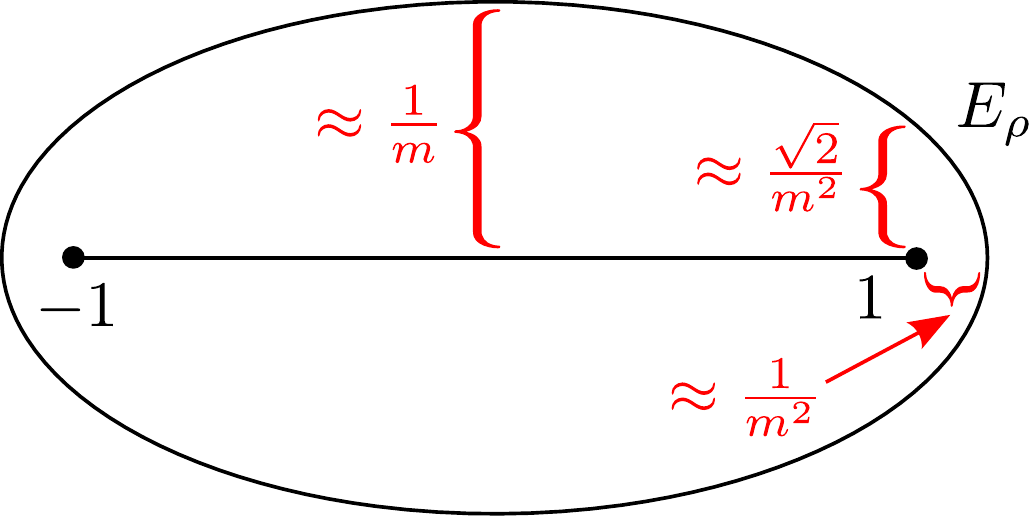}
\caption[Distances of interest associated with the Bernstein ellipse]
{ { \bf Distances $1-a$, $b$, and the intersection of $x=1$ with
the Bernstein ellipse as a function of $m$.}
The distance from $z=1$ to the intersection of $E_\rho$
with the $\Re(\cos\phi)$ axis is $\approx 1/m^2$, and is equal to
$1-a$, where $a$ is the semi-major axis of $E_\rho$. 
The intersection of $x=1$ with $E_\rho$ has a distance of 
$\approx \sqrt{2}/m^2$ to the point $z=1$.
The vertical distance from $z=0$ to $E_\rho$ is $\approx 1/m$, and
is equal to $b$, the semi-minor axis of $E_\rho$.
}
\label{fig:geom1}
\end{figure}
\subsubsection{Length of Gustafsson's 
Contours within the Bernstein Ellipse}
Recall from Section \ref{se:gustafsson} that Gustafsson's contours $\gamma_1$
and $\gamma_2$ can be parameterized as
\begin{align} \label{eq:geomgamma}
\tilde{\gamma_1}(\tau) = \tau^4  +2i\beta_- \tau^2+1,\\
\tilde{\gamma_2}(\tau) = \tau^4 + 2i\beta_+\tau^2 -1.
\end{align}
Consider the sets $\Gamma_1$, $\Gamma_2$, consisting of all possible 
$\gamma_1$, and $\gamma_2$, respectively, defined as
\begin{align} 
\Gamma_1 = \{ \gamma_1: 0 < \beta_- < \infty  \}, \qquad
\Gamma_2 = \{ \gamma_2: 1 < \beta_+ < \infty  \}.
\end{align}
The boundary of $\Gamma_2$, denoted as $\partial \Gamma_2$, is given by $\gamma_2$ 
associated with $\beta_+ = 1$ and the $\gamma_1$ associated with $\beta_+ = \infty$.
We observe that in the limit as $\beta_- \to \infty$, formula (\ref{eq:orderY})
resembles a vertical line (see Figure \ref{fig:geom2}).
Together with bounds from Section \ref{se:geomdist}, 
it is clear that the angle that
$\partial \Gamma_2$ makes with $E_\rho$ is bounded from below.

\begin{figure}[h]
\centering
\includegraphics[width=0.45\textwidth]{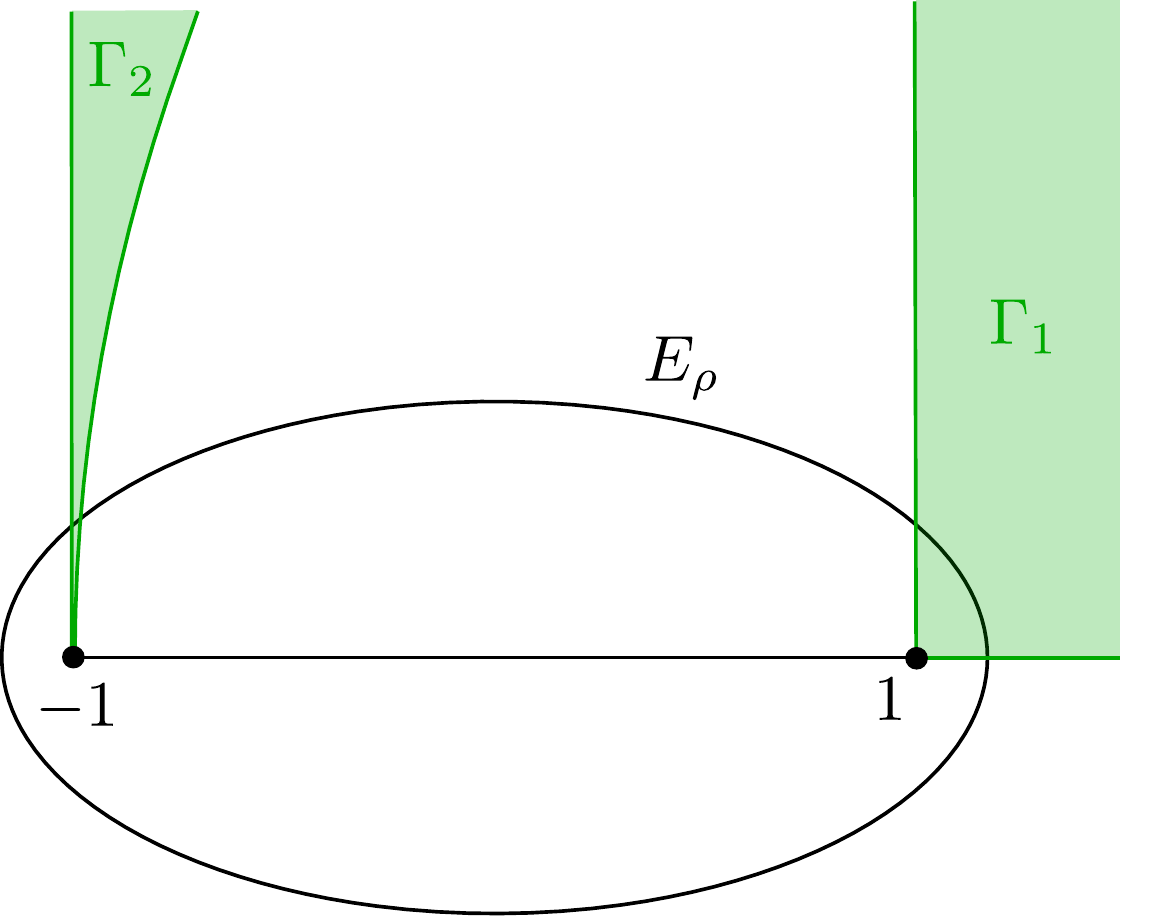}
\caption[Intersections of Gustafsson's contours with the Bernstein ellipses
]{ {\bf The intersection of all possible Gustafsson contours with
the Bernstein ellipse $E_\rho$
in the $z=\cos\phi$ plane.} Recall that Gustafsson's contours are denoted
$\gamma_1$ and $\gamma_2$, where $\gamma_1$ begins at the point $z=1$ and
$\gamma_2$ begins at the point $z=-1$.
The set of all possible $\gamma_1$ is denoted $\Gamma_1$. The
set of all possible $\gamma_2$ is denoted $\Gamma_2$. 
}
\label{fig:geom2}
\end{figure}
\subsection{Evaluating the Modal Green's Function} \label{se:mgfFinal}
After the variable substitution of $z= \cos \phi$, $dz = -\sin{\phi} \, d\phi$,
the formula for the modal Green's function, $G_m$, is 
\fboxit{
\begin{align}
G_m = \int\displaylimits_{[-1,1]}
\frac{e^{-i\kappa \sqrt{1 - \alpha z}} }
  {\sqrt{1-\alpha z} \sqrt{1-z^2} } 
T_m(z)dz.
\end{align}
}
Our rational function approximation, $R_m(z)$, is approximately equal to $T_m(z)$ 
on the interval $[-1,1]$. 
Therefore, substituting $R_m(z)$ for $T_m(z)$, we arrive at a formula for $G_m$,
\begin{align} \label{eq:mgfRF}
G_m \approx \int\displaylimits_{[-1,1]} 
\frac{e^{-i\kappa \sqrt{1 - \alpha z}} }
  {\sqrt{1-\alpha z} \sqrt{1-z^2} } 
R_m(z)dz.
\end{align}
The integrand of (\ref{eq:mgfRF}) is analytic everywhere in the
complex plane except for a
finite number of poles,
so the integral can be deformed. By Cauchy's residue theorem,
\begin{align} \label{eq:mgfres}
\hspace{-3em}\oint_\Gamma \frac{e^{-i\kappa \sqrt{1 - \alpha z}} }
  {\sqrt{1-\alpha z} \sqrt{1-z^2} } 
R_m(z)dz
= 2\pi i \sum_{k=1}^n \underset{z=z_k}{\text{Res}} 
  \Bigg( 
       \frac{e^{-i\kappa \sqrt{1-\alpha z}}}
            {\sqrt{1-\alpha z}\sqrt{1-z^2} } R_m(z) \Bigg),
\end{align}
where $z_1,...,z_n$ are the poles inside $\Gamma$.
Thus, if $\Gamma$ is a closed contour containing the interval $[-1,1]$,
we have that
\begin{align} \label{eq:mgfre}
\hspace{-5em}G_m \approx -\int\displaylimits_{\Gamma\setminus[-1,1]} 
\frac{e^{-i\kappa \sqrt{1 - \alpha z}} }
  {\sqrt{1-\alpha z} \sqrt{1-z^2} } 
R_m(z)dz
+ 2\pi i \sum_{k=1}^n \underset{z=z_k}{\text{Res}} 
  \Bigg( 
       \frac{e^{-i\kappa \sqrt{1-\alpha z}}}
            {\sqrt{1-\alpha z}\sqrt{1-z^2} } R_m(z) \Bigg),
\end{align}
where $\Gamma$ is a contour starting at $z=1$ and ending at $z=-1$.
We select $\Gamma\setminus[-1,1]$ to be the Gustafsson 
contour $\gamma_1 + \gamma_c + \gamma_2$, which we described 
in Section \ref{se:gustafsson}. Since the integrand vanishes over $\gamma_c$, 
we have that
\begin{align} \label{eq:mgfzfinal}
\begin{split}
\hspace{-3em}G_m \approx & 
\int_{\gamma_1} \frac{e^{-i\kappa \sqrt{1 - \alpha z}} }
  {\sqrt{1-\alpha z} \sqrt{1-z^2} } R_m(z) dz   
 +\int_{\gamma_2} \frac{e^{-i\kappa \sqrt{1 - \alpha z}}}
  {\sqrt{1-\alpha z} \sqrt{1-z^2} } R_m(z)  dz \\
 & \qquad +2\pi i \sum_{k=1}^n 
  \underset{z=z_k}{\text{Res}}
 \Bigg( 
       \frac{e^{-i\kappa \sqrt{1-\alpha z}}}
            {\sqrt{1-\alpha z}\sqrt{1-z^2} } R_m(z) \Bigg).
\end{split}
\end{align} 
\subsection{Removing the Singularity} \label{se:removesingularity}
Recall that the integral in (\ref{eq:mgfz}) corresponding to the $\gamma_1$
contour has the formula 
\begin{align} \label{eq:gamma_1TermFact}
\hspace{-2em}\int_{\gamma_1} \frac{e^{-i\kappa \sqrt{1 - \alpha z}} }
  {\sqrt{1-\alpha z} \sqrt{1-z^2}} 
  T_m(z)  dz=
\int_{\gamma_1} \frac{e^{-i\kappa \sqrt{1 - \alpha z}} }
  {\sqrt{1-\alpha z} \sqrt{1-z}\sqrt{1+z}} 
  T_m(z)  dz.
\end{align}
Observe that the integrand in (\ref{eq:gamma_1TermFact}) has square-root
singularities at $z=1$ and $z=-1$. Furthermore, when $\alpha\approx 1$, 
the product of the terms, 
\begin{align}
\frac{1}{\sqrt{1-\alpha z} \sqrt{1-z}} \approx \frac{1}{1-z},
\end{align}
meaning that the integrand will have a $1/z$-type singularity at $z=1$. 
By careful reparameterization of the contour $\gamma_1$, the singularities 
in (\ref{eq:gamma_1TermFact}) can be removed. 
The variable substitutions and analysis of the singularities
in this section are unchanged when $R_m(z)$ is substituted for $T_m(z)$.
Recall from Section \ref{se:gustafsson}
that the contour $\gamma_1$ can be parameterized as
\begin{align} \label{eq:gamma_1param}
\gamma_1(t) = \frac{t^2}{4(1/\alpha - 1)} + 1 + it,
\end{align}
for $t >0$.
For convenience, we introduce the parameter $\beta_-$, defined as 
\begin{align}
\beta_- = \sqrt{1/\alpha -1}, 
\end{align}
and we observe that, since $0 \leq \alpha < 1$, we have $0 < \beta_- < \infty$.
We then follow \cite{gustafsson} and perform the substitution $t=2\beta_- \tau^2$ 
and reparameterize the contour $\gamma_1$ as $\tilde{\gamma_1}$, given by
\fboxit{
\begin{align} \label{eq:gamma_1kirill}
\tilde{\gamma_1}(\tau) = \gamma_1(2\beta_- \tau^2) = \tau^4  +2i\beta_- \tau^2+1.
\end{align}
}
Gustafsson showed (see \cite{gustafsson}, equations (15) and (16)) that, after  
substituting $z = \tilde{\gamma_1}(\tau)$, $dz = \tilde{\gamma_1}'(\tau)d\tau$,
\fboxit{
\begin{align} \label{eq:dzsqrt}
dz = 4\tau(\tau^2 +i\beta_-) d\tau, \\
\label{eq:dzsqrt2}
\sqrt{1-\alpha z} = -i\sqrt{\alpha}(\tau^2 + i\beta_-).
\end{align}
}
Thus, with the parameterization $z=\tilde{\gamma_1}(\tau)$, formula
(\ref{eq:gamma_1TermFact}) becomes 
\begin{align} \label{eq:gamma_1TildeTerm}
 \frac{-4i}{\sqrt{\alpha}}\,  \int_{0}^\infty
\frac{e^{-i\kappa \sqrt{1 - \alpha \tilde{\gamma_1}(\tau)}}}
{\sqrt{1 - \tilde{\gamma_1}(\tau)}  \sqrt{1 + \tilde{\gamma_1}(\tau)} }
T_m(\tau) d\tau,
\end{align}
where we have used (\ref{eq:dzsqrt}) and (\ref{eq:dzsqrt2})
to cancel the $\sqrt{1-\alpha z}$ term. 
The integrand in (\ref{eq:gamma_1TildeTerm}) has a square-root singularity
near $z=1$. 
Substituting (\ref{eq:gamma_1kirill}) into (\ref{eq:gamma_1TildeTerm}), we have
\begin{align} 
\frac{-4i}{\sqrt{\alpha}} \int
  \frac{e^{-i\kappa \sqrt{1-\alpha \tilde{\gamma_1}(\tau)}}}
        {\sqrt{-(\tau^4 + 2i\beta_-\tau^2 )}  \sqrt{1 + \tilde{\gamma_1}(\tau)} } 
        T_m(\tilde{\gamma_1}(\tau)) \tau d\tau,
\end{align}
which can be simplified to
\fboxit{
\begin{align}\label{eq:reparamfinal}
\frac{-4}{\sqrt{\alpha}} \int_0^\infty
  \frac{e^{-i\kappa \sqrt{1-\alpha \tilde{\gamma_1}(\tau)}}}
        {\sqrt{\tau^2 + 2i\beta_-}  \sqrt{1 + \tilde{\gamma_1}(\tau)} } 
        T_m(\tilde{\gamma_1}(\tau)) d\tau.
\end{align}
}
Note that the integrand of (\ref{eq:reparamfinal}) is the product of
a smooth function and the function $1/\sqrt{\tau^2 +2i\beta_-}$. 
Let $F_1(\tilde{\gamma_1}(\tau))$  be the smooth term, given by the formula
\begin{align} \label{eq:smoothF1}
F_1(\tau) = \frac{e^{-i\kappa \sqrt{1 - \alpha \tilde{\gamma_1}_1(\tau)}}}
  {\sqrt{1+\tilde{\gamma_1}(\tau)}} T_m(\tilde{\gamma_1}(\tau)) 
\end{align}
We now rewrite (\ref{eq:reparamfinal}) using (\ref{eq:smoothF1}), so that
\begin{align} \label{eq:mgfF1}
\frac{-4}{\sqrt{\alpha}} \int_0^\infty \frac{F_1(\tau)}
  {\sqrt{\tau^2 + i\beta_-}} d\tau.
\end{align}
The variable substitutions for the integral corresponding to the $\gamma_2$
contour are similar. 
Recall that $\gamma_2$ can be parameterized as 
\begin{align}
\gamma_2(t) = \frac{t^2}{1/\alpha +1} -1 + it.
\end{align}
For the $\gamma_2$ contour, we introduce the parameter 
$\beta_+$, defined as
\begin{align}
\beta_+ =  \sqrt{1/\alpha +1}, 
\end{align}
and we observe that, since $0 \leq \alpha < 1$, we have $1 < \beta_+ < \infty$.
We reparameterize $\gamma_2(t)$ as $\tilde{\gamma_2}(t)$, given by the formula
\fboxit{
\begin{align}
\tilde{\gamma_2}(\tau) = \tau^4 + 2i\beta_+\tau^2 -1,
\end{align}
}
By proceeding as before, we arrive at the formula
for $F_2(\tau)$, 
\begin{align}
F_2(\tau) = \frac{e^{-i\kappa \sqrt{1 -\alpha\tilde{\gamma_2}(\tau)}}}
  {\sqrt{1-\tilde{\gamma_2}(\tau)}} T_m(\tilde{\gamma_2}(\tau)).
    \label{eq:smoothF2}
\end{align}
The formula for the integral corresponding to the $\gamma_2$ contour is thus
\fboxit{
\begin{align} \label{eq:mgfF2}
\frac{4i}{\sqrt{\alpha}} \int_0^\infty \frac{F_2(\tau)}
  {\sqrt{\tau^2 +2i \beta_+}} d\tau.
\end{align}
}
We combine (\ref{eq:mgfF1}) and (\ref{eq:mgfF2}) to write a formula for
the $m$th modal Green's function, 
\begin{align} \label{eq:mgfF1F2}
G_m = 
\frac{-4}{\sqrt{\alpha}} \int_0^\infty \frac{F_1(\tau)}
  {\sqrt{\tau^2 +2i \beta_-}} d\tau + 
\frac{4i}{\sqrt{\alpha}} \int_0^\infty \frac{F_2(\tau)}
  {\sqrt{\tau^2 +2i \beta_+}} d\tau.
\end{align}
Because $\beta_+$ is bounded from below by $1$, the denominator in 
(\ref{eq:mgfF2}) is always greater than $1$. In contrast, when $\alpha\approx 1$,
we have that $\beta_- \approx 0$, which means that the denominator 
in (\ref{eq:mgfF1}) 
$\approx \sqrt{\tau^2}=\tau$.
\subsection{Intersection of the Bernstein Ellipse with the Gustafsson Contour}
\label{se:intersect}
It is natural to split each contour integral into two segments, one
within the Bernstein ellipse and one beyond the ellipse. 
In this section, we solve for the locations where the Gustafsson 
contour, $\gamma_1 \cup \gamma_2$ 
(introduced in \ref{se:gustafsson}),
intersects the Bernstein ellipse in the $z=\cos\phi$-plane.
We derive formulae in terms of the Bernstein ellipse's parameter and in terms
of the Gustafsson contours' parameter.
\subsubsection{Intersection in Terms of the Bernstein Ellipse's Parameter}
\label{se:intersection}
Recall from Section \ref{se:chebyshevEvalOnBE} that the Bernstein 
ellipse, $E_\rho$, is parameterized by the formula
\begin{align} \label{eq:erho2}
E_\rho(\theta) = a \cos \theta + i b \sin \theta,
\end{align}
for $\theta \in [0,2\pi)$, where
\begin{align}
a = \frac{1}{2}(\rho + \rho^{-1}), \qquad b = \frac{1}{2}(\rho - \rho^{-1}).
\end{align}
Also recall from Section \ref{se:gustafsson} that Gustfasson's contours, 
$\gamma_1$ and $\gamma_2$, can be reparameterized as 
\begin{align} \label{eq:geomgamma2}
\tilde{\gamma_1}(\tau) = \tau^4  +2i\beta_- \tau^2+1,\\
\tilde{\gamma_2}(\tau) = \tau^4 + 2i\beta_+\tau^2 -1,
\end{align}
where
\begin{align} 
\beta_- = \sqrt{1/\alpha -1} , \qquad \beta_+ = \sqrt{1/\alpha +1}.
\end{align}
To solve for the parameter $\theta$ for which $E_\rho(\theta)$ intersects
$\tilde{\gamma}_1$, we substitute the real and imaginary parts of $E\rho$ into
$\tilde{\gamma}_1$, to arrive at a quadratic equation in theta.

Let $s$ be the larger of the two roots of
\fboxit{
\begin{align} \label{eq:intersectTheta}
\frac{b^2}{4\beta_+^2}
s^2 + as + \Big(  1 + \frac{b^2}{4\beta_+^2} \Big) = 0.
\end{align}
}
Then,
\fboxit{
\begin{align}
\theta = \arccos(s)
\end{align}
}
A similar procedure is used to solve for $\tau$ such that 
of $\tilde{\gamma}_2(\tau)$ intersects $E_\rho$, resulting in the formula 

\fboxit{
\begin{align} \label{eq:intersectTheta2}
\frac{b^2}{4\beta_-^2}
s^2 + as + \Big( - 1 + \frac{b^2}{4\beta_-^2} \Big) = 0.
\end{align}
}
Then
\fboxit{
\begin{align}
\theta = \arccos(s)
\end{align}
}
\subsubsection{Intersection in Terms of the Gustafsson's Contours' Parameters}

We now solve for parameter $\tau$ for which $\tilde{\gamma}_1(\tau)$
intersects $E_\rho$.

Let $s>0$ be the positive root of

\fboxit{
\begin{align}
s^2 + \Big( \frac{4a^2\beta_-^2}{b^2} +2 \Big)s + (1 - a^2) = 0.
\end{align}
}
Then, 
\fboxit{
\begin{align}
\tau = s^{\frac{1}{4}}.
\end{align}
}
The parameter $\tau$ for which $\tilde{\gamma}_2(\tau)$ 
intersects $E_\rho$ is solved
in a similar fashion.

\fboxit{
\begin{align}
s^2 + \Big( \frac{4a^2\beta_+^2}{b^2} -2 \Big)s + (1 - a^2) = 0.
\end{align}
}
Then,
\fboxit{
\begin{align}
\tau = s^{\frac{1}{4}}.
\end{align}
}
\section{Algorithm}
Recall that the method of Epstein et al. \cite{epstein} has computational cost
which scales with both $|\kappa|$ and $1/(1-\alpha)$, and cannot be easily
parallelized (see Section \ref{se:epstein}).
In contrast, the method of Gustafsson \cite{gustafsson} has computational cost
independent of $\kappa$ and $\alpha$, but incurs cancellation error which 
grows geometrically in $m$ (see Section \ref{se:cancelgustafsson}).

Our technique is to compute the modal Green's function by integrating along 
Gustafsson's contours
using a rational function approximation in place of
the Chebyshev polynomial. 
Because the spherical wave term in the integrand monotonically decays,
our algorithm's order is completely independent of $\kappa$.
Unlike the method of Gustafsson, because our rational function
approximation $R_m(z)$ is bounded
by our choice of Bernstein ellipse $\rho$, our approach does not have cancellation
error which geometrically grows in $m$. 
This comes at the price of having to evaluate the residues of $R_m$ on the 
boundary of 
the corresponding Bernstein ellipse $E_\rho$, which scales with $m$.
We also use the same technique as Gustafsson to evaluate the Green's function
when $\alpha \approx 1$, in time independent of of $\alpha$.
Consequently, our algorithm's computational cost depends only on $m$ and is
independent of both $\kappa$ and $\alpha$, and scales as $O(m)$.
\subsection{Choice of the Rational Function Approximation} \label{se:choiceRat}
Recall from Section \ref{se:ratfuncon}
that the Chebyshev polynomial $T_m(z)$ can be approximated on the interval 
$[-1,1]$ with a rational function, $R_m(z)$, constructed via an application of Cauchy's 
integral formula
followed by the application of a quadrature rule.
This rational function approximation decays quickly in the complex plane.
In this section, we introduce a different approximation, also denoted $R_m(z)$,
which is the
sum of a Cauchy integral and a rational function.

By Cauchy's integral formula, $T_m(z)$ can be expressed as the contour integral,
\begin{align} \label{eq:cif1}
T_m(z) = \frac{1}{2\pi i} \oint\limits_{\Gamma} \frac{T_m(v)}{v-z} dv,
\end{align}
where $\Gamma$ is any simple closed contour, and $z$ is a point in the 
interior of $\Gamma$. Similarly,
\begin{align} \label{eq:cif0}
\frac{1}{2\pi i} \oint_{\Gamma} \frac{T_m(v)}{v-z} dv=0,
\end{align}
for all $z$ outside $\Gamma$.
Recall from \ref{se:bernsteinrule} that for any $m$th order Chebyshev
polynomial, there is an
associated $\rho$ such that, within the Bernstein ellipse
$E_{\rho}$, $T_m(z)$ is bounded by the constant $M$. 
Furthermore, within the interior of $E_\rho$, the Chebyshev polynomial
oscillates exactly once along any possible Gustafsson 
contour (see Section \ref{se:geometrybern}).  
Note that the parameter $\rho$ associated with the Bernstein ellipse $E_\rho$
is a function of $m$, but we denote it simply as $\rho$.
We also denote a scaled copy of $E_\rho$ by $E_{\tilde{\rho}}$, where 
$E_{\tilde{\rho}}$ has twice the major axis and twice the minor axis of 
$E_\rho$ (see Figure \ref{fig:choiceRat}). 
Note that $E_{\tilde{\rho}}$ is not a Bernstein ellipse.

Let $C_\rho$ denote the part of the Bernstein ellipse $E_\rho$ 
between the contours
$\gamma_1$ and $\gamma_2$, represented as the blue arc between $p_1$
and $p_2$ in Figure \ref{fig:choiceRat}, where $p_1 \in \mathbb{C}$ and 
$p_2 \in \mathbb{C}$ are the intersection points of $\gamma_1$ and $\gamma_2$
with $E_\rho$, respectively.
We split the Cauchy integral into two parts,
\begin{align}
T_m(z) = \frac{1}{2\pi i} \int\displaylimits_{C_\rho} \frac{T_m(v)}{v-z}  dv
  + \frac{1}{2\pi i} \int\displaylimits_{E\rho\setminus C_\rho} \frac{T_m(v)}{v-z} dv
\end{align}
\begin{figure}[h]
\centering
\includegraphics[width=0.55\textwidth]{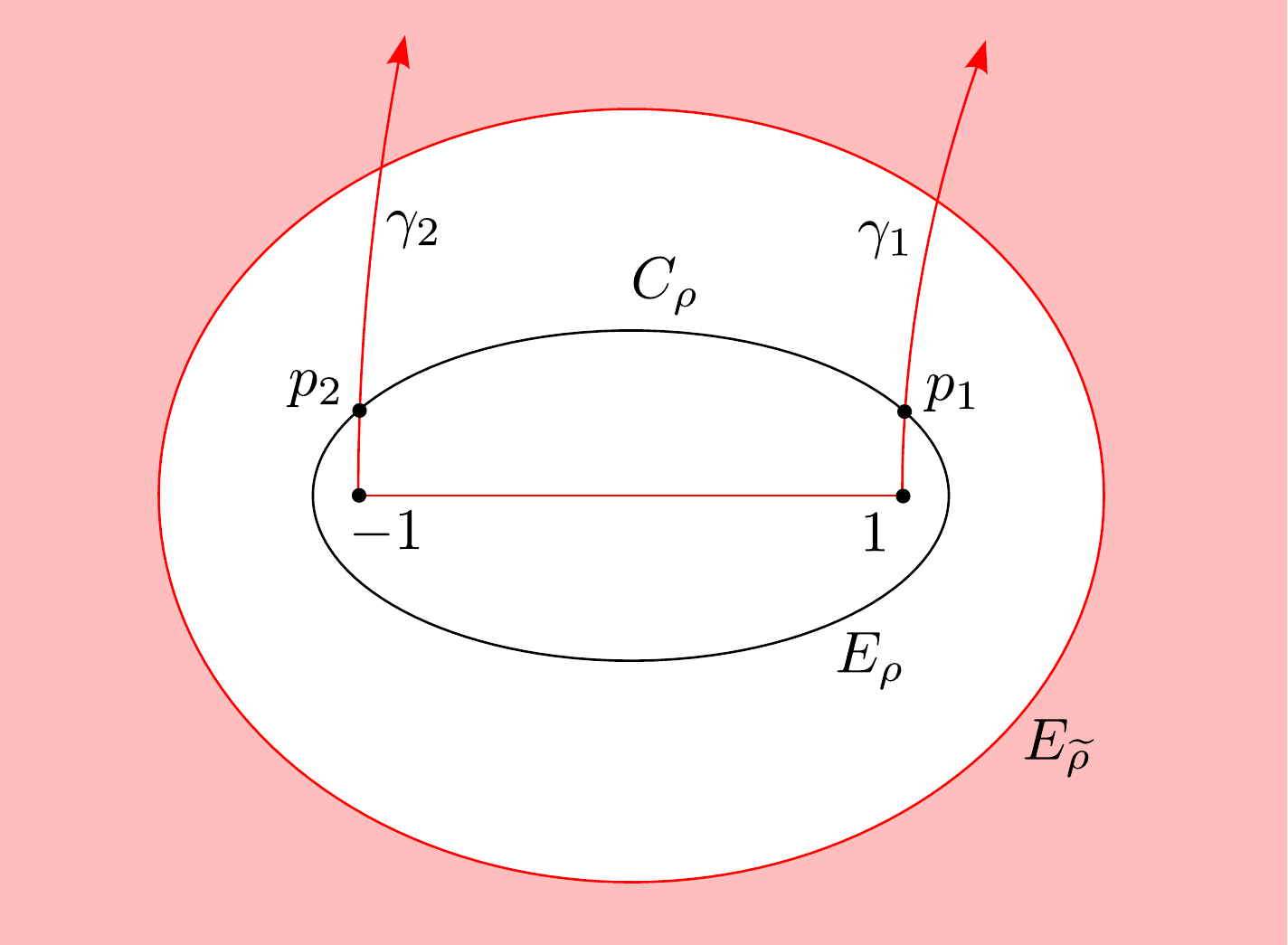}
\caption[Deformation of the contour]
{ {\bf Contours of interest with respect to the function $R_m(z)$ 
in the $z=\cos\phi$ plane.} Gustafsson's
contours are labeled as $\gamma_1$ and $\gamma_2$. The inner Bernstein ellipse
is denoted by $E_\rho$. The outer ellipse is denoted by $E_{\tilde{\rho}}$.
The intersection of $\gamma_1$ with $E_\rho$ is denoted by $p_1$, and the intersection
of $\gamma_2$ with $E_\rho$ is denoted by $p_2$. The arc of $E_\rho$ between $p_2$ and
$p_1$ is denoted by $C_\rho$. 
The contours highlighted
in red and region shaded in red correspond to the
values of $z$ on which the quadrature in (\ref{eq:R_mquadrule})
must be accurate, in the sense of (\ref{eq:r_munit})-(\ref{eq:r_mC}).
}
\label{fig:choiceRat}
\end{figure}

Now, suppose that $\theta_1,\ldots,\theta_n$, $w_1,\ldots,w_n$ are
the nodes and weights of
a quadrature formula such that
\begin{align} \label{eq:R_mquadrule}
\frac{1}{2\pi i} \int\limits_{C_\rho} \frac{T_m(v)}{v-z} dv \approx 
\frac{1}{2\pi i}  \sum_{i=1}^n \frac{T_m(v_i)}{v_i-z} dv_i w_i,
\end{align}
where
$v_i = E_\rho(\theta_i)$, $dv_i = E_\rho'(\theta_i)$, 
and the quadrature is accurate
to precision $\epsilon > 0$ for all $z \in [-1,1]$, 
$z \in \gamma_1 \cap E_{\tilde{\rho}}^o$, 
$z \in \gamma_2\cap E_{\tilde{\rho}}^o$, 
$z \in \mathbb{C} \setminus E_{\tilde{\rho}}^o$, where  
$E_{\tilde{\rho}}^o$ is the interior of $E_{\tilde{\rho}}$
(see Figure \ref{fig:choiceRat}).
Now, let $R_m(z)$ be defined by
\begin{align}
R_m(z) = \frac{1}{2\pi i} \int\displaylimits_{E\rho\setminus C_\rho}
\frac{T_m(v)}{v-z} dv
+ \frac{1}{2\pi i} \sum_{i=1}^N \frac{T_m(v_i)}{v_i-z} dv_i w_i.
\end{align}
We observe that, due to formula (\ref{eq:cif1}),
we have that 
\begin{align} \label{eq:r_munit}
|T_m(z) - R_m(z)| < \epsilon,
\end{align}
for $z \in [-1,1]$. 
We also observe that, due to formula (\ref{eq:cif1}), we have that
\begin{align} \label{eq:r_mgamma1}
|T_m(z) - R_m(z)| < \epsilon,
\end{align}
for $z \in  \gamma_1 \cap E_{\rho}^o$ and 
$z \in \gamma_2 \cap E_{\rho}^o$. 
Likewise, due to formula (\ref{eq:cif0}), 
\begin{align} \label{eq:r_mgamma2}
|R_m(z)| < \epsilon,
\end{align}
for $z \in  \gamma_1 \setminus E_{\rho}^o$
and $z \in  \gamma_2 \setminus E_{\rho}^o$.
We also observe that, due to formula (\ref{eq:cif0}),
\begin{align}  \label{eq:r_mC}
|R_m(z)| < \epsilon,
\end{align}
for $z \in \mathbb{C} \setminus E_{\tilde{\rho}}^o$.
\subsubsection{Deformation of the Contour} \label{se:deformation}
Recall from Section \ref{se:mgfFinal}
that
after the variable substitution of $z= \cos \phi$, $dz = -\sin{\phi} \, d\phi$,
the formula for the modal Green's function, $G_m$, is 
\fboxit{
\begin{align}
G_m = \int\displaylimits_{[-1,1]}
\frac{e^{-i\kappa \sqrt{1 - \alpha z}} }
  {\sqrt{1-\alpha z} \sqrt{1-z^2} } 
T_m(z)dz.
  \label{eq:mgfTF2}
\end{align}
}
Our approximation, $R_m(z)$, by formula (\ref{eq:cif1}), is approximately 
equal to $T_m(z)$ 
on the interval $[-1,1]$. 
Therefore, substituting $R_m(z)$ for $T_m(z)$, we arrive at a formula for $G_m$,
\begin{align} \label{eq:mgfRF2}
G_m \approx \int\displaylimits_{[-1,1]} 
\frac{e^{-i\kappa \sqrt{1 - \alpha z}} }
  {\sqrt{1-\alpha z} \sqrt{1-z^2} } 
R_m(z)dz.
\end{align}
The integrand of (\ref{eq:mgfRF2}) is analytic everywhere in the
complex plane except for a
finite number of poles,
so the integral can be deformed. 
Recall that, for any closed contour $\Gamma \in \mathbb{C}$,
by Cauchy's residue theorem,
\begin{align} \label{eq:mgfres2}
\hspace{-3em}\oint_\Gamma \frac{e^{-i\kappa \sqrt{1 - \alpha z}} }
  {\sqrt{1-\alpha z} \sqrt{1-z^2} } 
R_m(z)dz
= 2\pi i \sum_{k=1}^n \underset{z=z_k}{\text{Res}} 
  \Bigg( 
       \frac{e^{-i\kappa \sqrt{1-\alpha z}}}
            {\sqrt{1-\alpha z}\sqrt{1-z^2} } R_m(z) \Bigg),
\end{align}
where $z_1,...,z_n$ are the poles inside $\Gamma$.
For brevity, let the portion of the integrand in (\ref{eq:mgfres2}) 
corresponding to the spherical
wave component 
be represented by the function $H(z)$, given by
\begin{align} \label{eq:HSphere}
H(z) = \frac{e^{-i\kappa \sqrt{1 - \alpha z}} }
  {\sqrt{1-\alpha z} \sqrt{1-z^2} } . 
\end{align}
If $\Gamma$ is a closed contour containing the interval $[-1,1]$,
we have that
\begin{align} \label{eq:mgfre2}
\hspace{-1em}G_m \approx -\int\displaylimits_{\Gamma\setminus[-1,1]} 
H(z)R_m(z)dz
+ 2\pi i \sum_{k=1}^n \underset{z=z_k}{\text{Res}} 
  \bigg( H(z) 
            R_m(z) \bigg),
\end{align}
where we have substituted formula (\ref{eq:HSphere}) for the spherical wave term. 
We select $\Gamma\setminus [-1,1]$ to be Gustafsson's contours within the
outer ellipse, $ E_{\tilde{\rho}}$, with both segments connected by a short segment 
$\gamma_c \subset \mathbb{C} \setminus E_{\tilde{\rho}}^o$ (see Figure
\ref{fig:choiceRatgammaC}).
\begin{figure}[h]
\centering
\includegraphics[width=0.45\textwidth]{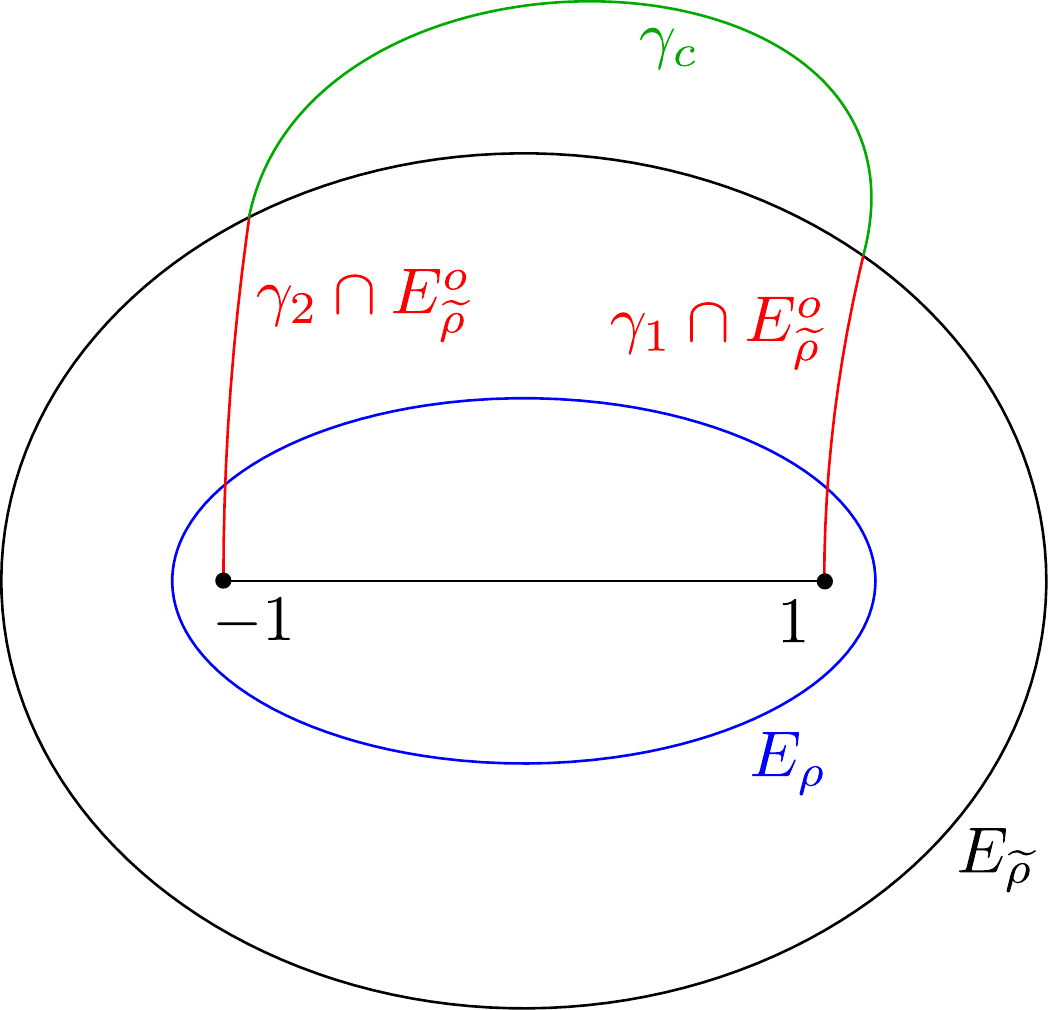}
\caption[Contours of interest for $R_m(z)$]{
{ \bf Contours used in formula (\ref{eq:mgfz2}) in the $z=\cos\phi$ plane.}
The interior Bernstein ellipse is denoted by $E_\rho$ and drawn in blue.
The exterior ellipse is denoted by $E_{\tilde{\rho}}$.
Gustafsson's contours within the exterior ellipse are denoted by 
$\gamma_2 \cap E_{\tilde{\rho}}^o$ 
and $\gamma_2 \cap E_{\tilde{\rho}}^o$ and drawn in red. The contour 
$\gamma_c \subset \mathbb{C} \setminus E_{\tilde{\rho}}^o$, connecting 
the $\gamma_1$ and $\gamma_2$ segments, is drawn in green.
The intersection of $\gamma_1$ with $E_\rho$ is denoted by $p_1$, and the intersection
of $\gamma_2$ with $E_\rho$ is denoted by $p_2$. 
}
\label{fig:choiceRatgammaC}
\end{figure}
Substituting this choice of $\Gamma\setminus [-1,1]$ into (\ref{eq:mgfre2}), we have
\begin{align} \label{eq:mgfz2}
\begin{split}
\hspace{-2em}
G_m \approx 
\int\displaylimits_{\gamma_1 \cap E_{\tilde{\rho}}^o} 
 H(z) R_m(z) dz +
\int\displaylimits_{\gamma_2 \cap E_{\tilde{\rho}}^o} 
  H(z) R_m(z)  dz+
\int\displaylimits_{\gamma_c} 
  H(z) R_m(z)  dz \\
  + 2\pi i \sum_{k=1}^n \underset{z=z_k}{\text{Res}} 
  \Bigg( 
        H(z) R_m(z) \Bigg),
\end{split}
\end{align}
where $\gamma_1$ and $\gamma_2$ are Gustafsson's contours as described in Section
\ref{se:gustafsson}, and $E_{\tilde{\rho}}^o$ is the interior of the scaled
Bernstein ellipse introduced earlier
(see Figure \ref{fig:choiceRatgammaC}).
We split the integral corresponding to the $\gamma_1$ contour into 
\begin{align} \label{eq:gamma_1expand}
\hspace{-2em}
\int\displaylimits_{\gamma_1} H(z) R_m(z) dz =
\int\displaylimits_{\gamma_1 \cap E_{\rho}^o } H(z) R_m(z) dz +
\int\displaylimits_{\gamma_1 \setminus E_{\rho}^o} H(z) R_m(z) dz .
\end{align}
Recall that by formula (\ref{eq:r_mgamma1}), $R_m(z) \approx T_m(z)$ for  
$z\in \gamma_1 \cap E_{\rho}^o$ and for $z \in \gamma_2 \cap E_{\rho}^o$. 
Also, recall that by formula (\ref{eq:r_mgamma2}), $R_m(z) \approx \epsilon$ for  
$z\in \gamma_1 \setminus E_{\rho}^o$ and for 
$z\in \gamma_2 \setminus E_{\rho}^o$. 
Substituting (\ref{eq:r_mgamma1}) and (\ref{eq:r_mgamma2}) into 
(\ref{eq:gamma_1expand}),
we arrive at a formula for the $\gamma_1$ contour within the 
interior of $E_{\tilde{\rho}}$, 
\begin{align} \label{eq:gamma_1simp}
\hspace{-2em}
\int\displaylimits_{\gamma_1  \cap E_{\tilde{\rho}}^o } H(z) R_m(z) dz \approx 
\int\displaylimits_{\gamma_1 \cap E_{\rho}^o } H(z) T_m(z) dz.
\end{align}
Likewise, 
the formula for the $\gamma_2$ contour within the 
interior of $E_{\tilde{\rho}}$ is
\begin{align} \label{eq:gamma_2simp}
\hspace{-2em}
\int\displaylimits_{\gamma_2 \cap E_{\tilde{\rho}}^o  } H(z) R_m(z) dz \approx 
\int\displaylimits_{\gamma_2 \cap E_{\rho}^o } H(z) T_m(z) dz.
\end{align}
We also observe that, due to formula (\ref{eq:r_mC}),
the integral corresponding to $\gamma_c$ evaluates to zero.
We now substitute our formulas for
the $\gamma_1 \cap E_{\tilde{\rho}}^o $,
$\gamma_2  \cap  E_{\tilde{\rho}}^o $, and $\gamma_c$ contours into
(\ref{eq:mgfz2}) to arrive at
\begin{align} \label{eq:mgfz3}
\begin{split}
\hspace{-3em}
G_m \approx 
\int\displaylimits_{\gamma_1 \cap E_{\rho}^o } \frac{e^{-i\kappa \sqrt{1 - \alpha z}} }
  {\sqrt{1-\alpha z} \sqrt{1-z^2} } T_m(z) dz +
\int\displaylimits_{\gamma_2 \cap E_{\rho}^o } \frac{e^{-i\kappa \sqrt{1 - \alpha z}}}
  {\sqrt{1-\alpha z} \sqrt{1-z^2} } T_m(z)  dz\\
  + 2\pi i \sum_{k=1}^n \underset{z=z_k}{\text{Res}} 
  \Bigg( 
       \frac{e^{-i\kappa \sqrt{1-\alpha z}}}
            {\sqrt{1-\alpha z}\sqrt{1-z^2} } R_m(z) \Bigg).
            \end{split}
\end{align}
\subsubsection{Interpretation of the Residues in
Formula (\ref{eq:mgfz3}) as a Quadrature Formula for the Contour $C_\rho$}
\label{se:interpret}
By Cauchy's integral theorem,
\begin{align} \label{eq:cauchyGM}
\hspace{-3em}
G_m \approx 
\int\displaylimits_{\gamma_1 \cap E_{\rho}^o }H(z) T_m(z) dz +
\int\displaylimits_{\gamma_2 \cap E_{\rho}^o }H(z) T_m(z) dz +
\int\displaylimits_{C_\rho }H(z) T_m(z) dz,
\end{align}
where $\gamma_1$, $\gamma_2$, $E_{\rho}^o $, and $C_\rho$ are described in 
Section \ref{se:choiceRat}.

Subtracting (\ref{eq:mgfz3}) from (\ref{eq:cauchyGM}), and rearranging, we arrive
at a formula for the $C_\rho$ contour,
\begin{align} \label{eq:gmAlmostQuad}
\hspace{-3em}
\int\displaylimits_{C_\rho }  \frac{e^{-i\kappa \sqrt{1-\alpha z}}}
            {\sqrt{1-\alpha z}\sqrt{1-z^2} } 
 T_m(z) dz \approx
2\pi i \sum_{k=1}^n \underset{z=z_k}{\text{Res}} 
  \Bigg( 
       \frac{e^{-i\kappa \sqrt{1-\alpha z}}}
            {\sqrt{1-\alpha z}\sqrt{1-z^2} } 
            R_m(z) \Bigg).
\end{align}
Recall from Section \ref{se:choiceRat} that
\begin{align} \label{eq:RmzIntme}
R_m(z) =\frac{1}{2\pi i} \int\displaylimits_{E\rho\setminus C_\rho} 
\frac{T_m(v)}{v-z} dv
+ \frac{1}{2 \pi i} \sum_{i=1}^n \frac{T_m(v_i)}{v_i-z} dv_i w_i,
\end{align}
where $v_i = E_\rho(\theta_i)$, $dv_i = E_\rho'(\theta_i)$,
and $\theta_1,\ldots,\theta_n$, $w_1,\ldots,w_n$ are nodes and weights of 
the quadrature constructed
in (\ref{eq:R_mquadrule}).
Thus, the residues $z_1,\ldots,z_n$ in (\ref{eq:gmAlmostQuad}) correspond 
to the points $v_1,\ldots,v_n$ and 
\begin{align} \label{eq:resIsQuad}
\hspace{-4.2em}
2\pi i \sum_{i=1}^n \underset{z=z_i}{\text{Res}} 
  \Bigg( 
       \frac{e^{-i\kappa \sqrt{1-\alpha z}}}
            {\sqrt{1-\alpha z}\sqrt{1-z^2} } 
            R_m(z) \Bigg) = 
 \sum_{i=1}^n 
       \frac{e^{-i\kappa \sqrt{1-\alpha v_i}}}
            {\sqrt{1-\alpha v_i}\sqrt{1-v_i^2} } 
            T_m(v_i)  dv_i w_i.
\end{align}
Substituting (\ref{eq:resIsQuad}) into (\ref{eq:gmAlmostQuad}), we have that
\begin{align} \label{eq:quadForCrho}
 \sum_{i=1}^n 
       \frac{e^{-i\kappa \sqrt{1-\alpha v_i}}}
            {\sqrt{1-\alpha v_i}\sqrt{1-v_i^2} } 
            T_m(v_i) dv_i w_i   
\approx
\int\displaylimits_{C_\rho }  \frac{e^{-i\kappa \sqrt{1-\alpha z}}}
            {\sqrt{1-\alpha z}\sqrt{1-z^2} } 
 T_m(z) dz ,
\end{align}
which resembles a quadrature formula for the
contour integral on $C_\rho$. 
Substituting formula (\ref{eq:quadForCrho}) 
into formula (\ref{eq:mgfz3}), we arrive at
\begin{align} \label{eq:mgfzQuad}
\begin{split}
\hspace{-3em}
G_m \approx 
\int\displaylimits_{\gamma_1 \cap E_{\rho}^o } \frac{e^{-i\kappa \sqrt{1 - \alpha z}} }
  {\sqrt{1-\alpha z} \sqrt{1-z^2} } T_m(z) dz +
\int\displaylimits_{\gamma_2 \cap E_{\rho}^o } \frac{e^{-i\kappa \sqrt{1 - \alpha z}}}
  {\sqrt{1-\alpha z} \sqrt{1-z^2} } T_m(z)  dz\\
+ \sum_{i=1}^n 
       \frac{e^{-i\kappa \sqrt{1-\alpha v_i}}}
            {\sqrt{1-\alpha v_i}\sqrt{1-v_i^2} } 
            T_m(v_i) dv_i w_i ,
            \end{split}
\end{align}
where $v_i = E_\rho(\theta_i)$, $dv_i = E_\rho'(\theta_i)$,
and $\theta_1,\ldots,\theta_n$, $w_1,\ldots,w_n$ are 
the nodes and weights of the quadrature constructed
in (\ref{eq:R_mquadrule}).
\subsection{Evaluation of the Integral on Gustafsson's Contour
when $\alpha \approx 1$}
\label{se:evalalpha}
Recall from Section \ref{se:interpret} that the formula for
the $m$th modal Green's function is
\begin{align} \label{eq:mgfzQuad2}
\begin{split}
\hspace{-3em}
G_m \approx 
\int\displaylimits_{\gamma_1 \cap E_{\rho}^o } \frac{e^{-i\kappa \sqrt{1 - \alpha z}} }
  {\sqrt{1-\alpha z} \sqrt{1-z^2} } T_m(z) dz +
\int\displaylimits_{\gamma_2 \cap E_{\rho}^o } \frac{e^{-i\kappa \sqrt{1 - \alpha z}}}
  {\sqrt{1-\alpha z} \sqrt{1-z^2} } T_m(z)  dz\\
+ \sum_{i=1}^n 
       \frac{e^{-i\kappa \sqrt{1-\alpha v_i}}}
            {\sqrt{1-\alpha v_i}\sqrt{1-v_i^2} } 
            T_m(v_i) dv_i w_i ,
            \end{split}
\end{align}
where $v_i = E_\rho(\theta_i)$, $dv_i = E_\rho'(\theta_i)$,
and $\theta_1,\ldots,\theta_n$, $w_1,\ldots,w_n$ are 
the nodes and weights of the quadrature constructed
in (\ref{eq:R_mquadrule}).
Recall also from Section \ref{se:removesingularity} that the integrals 
in (\ref{eq:mgfzQuad2}) can be written as 
\begin{align} \label{eq:mgfF1F2v2}
\begin{split}
G_m \approx 
\frac{-4}{\sqrt{\alpha}} \int_0^{\tau_1} \frac{F_1(\tau)}
{\sqrt{\tau^2 +2i \beta_-}} d\tau + 
\frac{4}{\sqrt{\alpha}} \int_0^{\tau_2} \frac{F_2(\tau)}
  {\sqrt{\tau^2 +2i \beta_+}} d\tau\\
  + \sum_{i=1}^n 
       \frac{e^{-i\kappa \sqrt{1-\alpha v_i}}}
            {\sqrt{1-\alpha v_i}\sqrt{1-v_i^2} } 
            T_m(v_i) dv_i w_i ,
            \end{split}
\end{align}
where $F_1(\tau)$ and $F_2(\tau)$ are smooth functions corresponding to the 
$\gamma_1$ and $\gamma_2$ contours, respectively
(see Section \ref{se:removesingularity},
equations (\ref{eq:mgfF1}) and (\ref{eq:mgfF2})), 
$\tau_1$ and $\tau_2$ are positive parameters such that  
$\gamma_1(\tau_1)$ and $\gamma_2(\tau_2)$ intersect $E_\rho$, respectively, 
and  
\begin{align}
\beta_- = \sqrt{1/\alpha -1} , \qquad \beta_+ = \sqrt{1/\alpha +1}.
\end{align}
When $\alpha \approx 1$,
the parameter $\beta_+ \approx 2$, meaning that 
the integrand in (\ref{eq:mgfF1F2v2}) corresponding
$\gamma_2$ remains a smooth function of $\tau$ for all values
of $0 \leq \alpha < 1$, and can be evaluated efficiently with 
a Gauss-Legendre quadrature. 
In contrast, 
when $\alpha \approx 1$, the parameter $\beta_- \approx 0$.
Consequently, for $\alpha\approx 1$, the integrand in (\ref{eq:mgfF1F2v2})
corresponding to
the $\gamma_1$ contour 
resembles a $1/\tau$ singularity at $\tau=0$. 
\subsubsection{Evaluation of the Integral on the Contour $\gamma_1$
when $\alpha \approx 1$}
\label{se:evalgamma1}
We integrate along the contour $\gamma_1$ using the following procedure.  
Observe that for $\tau$ sufficiently large, the integrand is smooth.
Thus we split the integral into two parts,
\begin{align}
\hspace{-4em}
\frac{-4}{\sqrt{\alpha}} \int_0^{\tau_1} \frac{F_1(\tau)}
  {\sqrt{\tau^2 +i \beta_-}} d\tau =
\frac{-4}{\sqrt{\alpha}} \int_0^{\tau_0} \frac{F_1(\tau)}
  {\sqrt{\tau^2 +i \beta_-}} d\tau +
\frac{-4}{\sqrt{\alpha}} \int_{\tau_0}^{\tau_1} \frac{F_1(\tau)}
  {\sqrt{\tau^2 +i \beta_-}} d\tau.
\end{align}
The integral corresponding to the interval $[\tau_0,\tau_1]$ can be efficiently
computed using a Gauss-Legendre quadrature. 
The integral corresponding to the interval $[0,\tau_0]$ is evaluated
with a specialized quadrature based on the technique 
used by Gustafsson (see \cite{gustafsson}, Section 4.2),
described below.
Recall that $F_1(\tau)$ is smooth, given by the formula
\begin{align}
F_1(\tau) = \frac{e^{-i\kappa \sqrt{1 - \alpha \tilde{\gamma_1}(\tau)}}}
  {\sqrt{1+\tilde{\gamma_1}(\tau)}} T_m(\tilde{\gamma_1}(\tau)),
\end{align}
where $\tilde{\gamma_1}(\tau)$ is  
\begin{align}
\tilde{\gamma_1}(\tau) =  \tau^4  + i\beta_- \tau^2 +1.
\end{align}
We expand $F_1(\tau)$ in k terms of its Taylor series about the point $\tau=0$,
given by the formula  
\begin{align} \label{eq:taylorF1}
F_1(\tau) \approx \sum_{n=0}^k a_n \tau^n. 
\end{align}
We compute the coefficients $a_n$ by first forming a Chebyshev expansion 
of $F_1(\tau)$ on the interval $[0,\tau_0]$, and then using the mapping
(\ref{eq:cheby2taylor})
described in Section \ref{se:cheby2taylor}. This mapping takes the
Chebyshev expansion coefficients and returns the corresponding Taylor expansion
at $\tau=0$.

We substitute (\ref{eq:taylorF1}) into the integral corresponding 
to the interal $[0,\tau_0]$ from (\ref{eq:mgfF1F2v2}), resulting in
\begin{align} \label{eq:mgfF1Taylor}
\frac{-4}{\sqrt{\alpha}} \int_0^{\tau_0} \frac{F_1(\tau)}
  {\sqrt{\tau^2 +2i \beta_-}} d\tau &\approx  
\frac{-4}{\sqrt{\alpha}} \int_0^{\tau_0} \frac{  
 \sum_{n=0}^k a_n\tau^n}
  {\sqrt{\tau^2 +i \beta_-}} d\tau \\
  &= \frac{-4}{\sqrt{\alpha}} \sum_{n=0}^k a_n
  \int_0^{\tau_0} \frac{\tau^n}{\sqrt{\tau^2 + 2i\beta_-}} d\tau
\end{align}
Recall from Section \ref{se:certainintegral} that the integral of
$\tau^n$ divided by $\sqrt{a\tau^2 +b}$ has the recurrence relation
\begin{align} \label{eq:recurrencecertain}
\int \frac{\tau^n}{\sqrt{a\tau^2+b}} d\tau =
  \frac{\tau^{n-1}\sqrt{a\tau^2+b}}{na} 
  - \frac{(n-1)^b}{na} \int \frac{\tau^{n-2}}{\sqrt{a\tau^2 +b}}d\tau,
\end{align}
for $n \geq 2$, where the base case has the formula
\begin{align}
\int \frac{1}{\sqrt{a\tau^2 +b}}d\tau = 
 \frac{1}{\sqrt{a}}  \ln\Big(\tau \sqrt{a} + \sqrt{a\tau^2 +b}\Big),
\end{align}
and this recurrence is known to be stable when $|b| < |a|$. 
We observe that $\beta_- \ll 1$ when
$\alpha \approx 1$, meaning that the recurrence
given by (\ref{eq:recurrencecertain}) is stable 
when applied to (\ref{eq:mgfF1Taylor}).   
\subsection{Construction of the Quadratures to Evaluate the Integral over 
the Contour $C_\rho$}
\label{se:constructboth}
Recall from Section \ref{se:choiceRat} that our approximation, $R_m(z)$, of the
$m$th order Chebyshev polynomial,
$T_m(z)$,
has the formula
\begin{align} \label{eq:R_mQuad}
R_m(z) =\frac{1}{2\pi i} \int\displaylimits_{E\rho\setminus C_\rho} 
\frac{T_m(v)}{v-z} dv
+ \frac{1}{2 \pi i} \sum_{i=1}^n \frac{T_m(v_i)}{v_i-z} dv_i w_i,
\end{align}
where $v_i = E_\rho(\theta_i)$, $dv_i = E_\rho'(\theta_i)$, 
$C_\rho$ is the region of the Bernstein ellipse $E_\rho$ between
and the contours $\gamma_1$ and $\gamma_2$, and
$\theta_1,\ldots,\theta_n$ and $w_1,\ldots ,w_n$ are the
nodes and weights of a quadrature 
such that 
\begin{align} \label{eq:R_mquadrule2}
\frac{1}{2\pi i} \int\limits_{C_\rho} \frac{T_m(v)}{v-z} dv \approx 
\frac{1}{2\pi i}  \sum_{i=1}^n \frac{T_m(v_i)}{v_i-z} dv_i w_i,
\end{align}
for 
$z \in [-1,1]$, 
$z \in \gamma_1 \cap E_{\tilde{\rho}}^o$, 
$z \in \gamma_2\cap E_{\tilde{\rho}}^o$, 
$z \in \mathbb{C} \setminus E_{\tilde{\rho}}^o$, where  
$E_{\tilde{\rho}}^o$ is the interior of $E_{\tilde{\rho}}$.
Recall also from Section \ref{se:choiceRat} that, by Cauchy's integral formula,
\begin{align}
T_m(z) = \oint_{E_\rho} \frac{T_m(v)}{v-z}dv,
\end{align}
for $z \in E_\rho$,
where $E_\rho$ is the Bernstein ellipse described in Section \ref{se:choiceRat}.
 Finally, recall from Section~\ref{se:bernsteinrule} that, if
$\rho=M^{1/m}$, then $\abs{T_m(z)} < M$ for all $z \in E_\rho^o$.
For the sake of simplicity, we first assume that $M=e$, and then consider the
case for general $M$ in Sections~\ref{sec:errrmz} and~\ref{sec:numquadr}.

We summarize the contours on which $R_m(z)$ approximates $T_m(z)$ point-wise
by stating that
\begin{align} \label{eq:pwsplitme}
\Big| \int_{C_\rho} \frac{T_m(v)}{v-z}dv -
  \sum_{i=1}^n \frac{T_m(v_i)dv_iw_i}{v_i -z} \Big| < \epsilon,
\end{align}
for all 
$z \in [-1,1]$, 
$z \in \gamma_1 \cap E_{\tilde{\rho}}^o$, 
$z \in \gamma_2\cap E_{\tilde{\rho}}^o$, 
$z \in \mathbb{C} \setminus E_{\tilde{\rho}}^o$, where  
$E_{\tilde{\rho}}^o$ is the interior of $E_{\tilde{\rho}}$.
Let $p_1 \in \mathbb{C}$ and $p_2 \in \mathbb{C}$
denote the intersections of $\gamma_1$ and $\gamma_2$ with $E_\rho$, respectively
(see Figure \ref{fig:crho}).
\begin{figure}[h]
\centering
\includegraphics[width=0.45\textwidth]{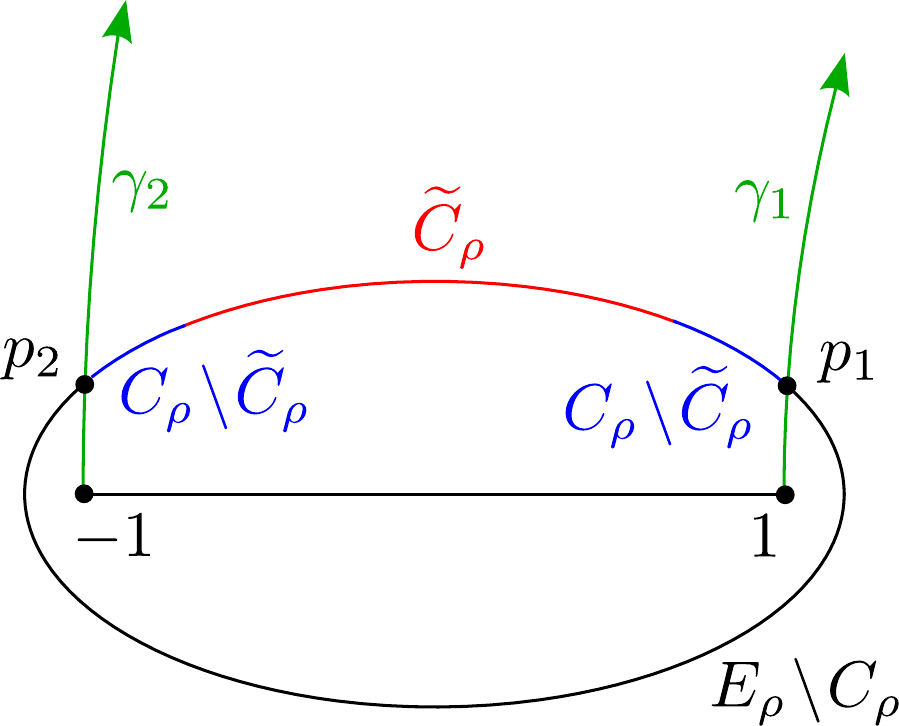}
\caption[Splitting of $C_\rho$ to construct separate quadratures]
{ { \bf Splitting of the Bernstein ellipse into $\tilde{C}_\rho$ and
$C_\rho \setminus \tilde{C}_\rho$ based on proximity to Gustafsson's contours
in the $\cos\phi$ plane.}
Gustafsson's contours are denoted as $\gamma_1$ and $\gamma_2$, and drawn
in green. The points where $\gamma_1$ and $\gamma_2$ intersect $E_\rho$ are denoted
as $p_1$ and $p_2$, respectively. The region of the ellipse bounded by 
the intersections $p_1$ and $p_2$ defines the segment $C_\rho$. 
The segment of $C_\rho$ not close to the points $p_1$ and $p_2$ is denoted as
$\tilde{C}_\rho$ and drawn in red. The segments of $C_\rho$ which are close
to the points $p_1$ and $p_2$ are denoted as $C_\rho \setminus \tilde{C}_\rho$
and drawn in blue.
The remainder of the ellipse is denoted as $E_\rho \setminus C_\rho$ and drawn 
in black. 
}
\label{fig:crho}
\end{figure}
Let $\tilde{C}_\rho \subset C_\rho$ denote the portion of $C_\rho$ 
no closer than $1/m^2$ from the
points $p_1$ and $p_2$, defined by
\begin{align} \label{eq:C_rho}
\tilde{C}_\rho = \{z: z \in C_\rho , |p_1 -z| > 1/m^2,  |p_2 -z| > 1/m^2 \}.
\end{align}
We split the integral in (\ref{eq:R_mquadrule}) into $\tilde{C}_\rho$
and $C_\rho$, arriving at
\begin{align} \label{eq:splitCrho}
\int_{C_\rho} \frac{T_m(v)}{v-z}dv =
\int_{\tilde{C}_\rho} \frac{T_m(v)}{v-z}dv + 
\int_{C_\rho \setminus \tilde{C}_\rho} \frac{T_m(v)}{v-z}dv.
\end{align}
The domain of integration $\tilde{C}_\rho$
is relatively well-separated
for all values of $z$ on which the quadrature rule in (\ref{eq:pwsplitme})
must hold.
In contrast, 
the domain of integration $C_\rho \setminus \tilde{C}_\rho$ is not
well-separated. Hence, we split the task of constructing the quadratures
on $C_\rho$ into two tasks.
\subsubsection{Quadratures for the Portion of $C_\rho$
Away From Gustafsson's Contours}
\label{se:quadfar}
Recall from formula (\ref{eq:C_rho}) that, by construction, 
$\tilde{C}_\rho$ is separated from the poles near its
end points by $1/m^2$.
Recall also from Section \ref{se:geometrybern} that, in the middle, $C_\rho$
is separated from the interval $[-1,1]$ by $\approx 1/m$. 
Hence, by Remark \ref{re:trefanalcont}, 
\begin{align} \label{eq:cifCt}
\int_{\tilde{C}_\rho} \frac{T_m(v)}{v-z} dv
\end{align}
is well approximated by an $O(m)$ Gauss-Legendre quadrature, 
for all $z \in [-1,1]$, 
$z \in \gamma_1 \cap E_{\tilde{\rho}}^o$, 
$z \in \gamma_2\cap E_{\tilde{\rho}}^o$, 
$z \in \mathbb{C} \setminus E_{\tilde{\rho}}^o$, where  
$E_{\tilde{\rho}}^o$ is the interior of $E_{\tilde{\rho}}$.
\subsubsection{Quadratures for the Portions of $C_\rho$ Near Gustafsson's Contours}
\label{se:quadnear}
In this section, we present the construction of a quadrature rule which approximates 
the contour integral
\begin{align} \label{eq:cifCEdge}
\int_{C_\rho\setminus \tilde{C}_\rho} \frac{T_m(v)}{v-z} dv,
\end{align}
for $z \in [-1,1]$, 
$z \in \gamma_1 \cap E_{\tilde{\rho}}^o$, 
$z \in \gamma_2\cap E_{\tilde{\rho}}^o$, 
$z \in \mathbb{C} \setminus E_{\tilde{\rho}}^o$, where  
$E_{\tilde{\rho}}^o$ is the interior of $E_{\tilde{\rho}}$.
Since $[-1,1]$ is well-separated from $C_\rho \setminus \tilde{C}_\rho$,
we focus only on $z \in (\gamma_1 \cap E_{\tilde{\rho}}) \cup
( \gamma_2  \cap E_{\tilde{\rho}})$.
Observe that $C_\rho \setminus \tilde{C}_\rho$ consists of two disjoint segments
(see Figure \ref{fig:crho}). 
One segment
of $C_\rho \setminus \tilde{C}_\rho$ 
approaches the point $p_1$, which denotes the intersection of
$\gamma_1$ (associated with $z=1$) with $E_\rho$, 
the other segment of  $C_\rho \setminus \tilde{C}_\rho$ approaches the
point $p_2$, which denotes the
intersection of $\gamma_2$ (associated with $z=-1$) with $E_\rho$.
We denote the points where $C_\rho\setminus \tilde{C}_\rho$ ends 
and $\tilde{C}_\rho$
begins with the points $\tilde{p}_1$ and $\tilde{p}_2$, where $\tilde{p}_1$
is the point closer to $p_1$, and $\tilde{p}_2$ is the point closer to
$p_2$.
We analyze the segment of $C_\rho \setminus \tilde{C}_\rho$ 
near $p_2$, with the understanding that the
Bernstein ellipse is symmetric and an identical argument applies to the 
segment of $C_\rho \setminus \tilde{C}_\rho$ near $p_1$.
\begin{figure}[h]
\centering
\includegraphics[width=0.48\textwidth]{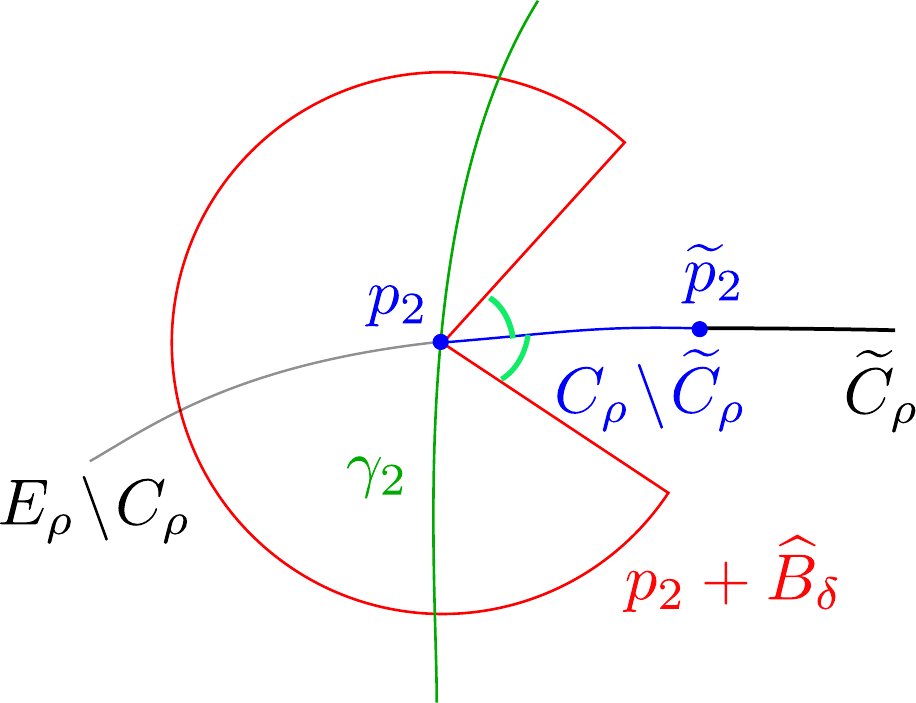}
\caption[Region in the vicinity of $C_\rho$'s endpoint where the quadrature 
formula must hold]{
{ \bf
Region $p_2 + \widehat{B}_\delta$ in which the quadrature in 
formula (\ref{eq:R_mquadrule2})
must accurately 
evaluate the integral over the contour $C_\rho\setminus \tilde{C}_\rho$
for $z\in\gamma_2$.}
The values of $z$ for which the quadrature 
must be accurate in the sense of formula (\ref{eq:l1v4}) are the
interior of the region denoted by $p_2 + \widehat{B}_\delta$, whose boundary is
drawn in red. Note that
the angle that $\widehat{B}_d$ makes with $C\rho\setminus \tilde{C}_\rho$ is
$\pi/6$ from above and $\pi/6$ from below.  
Gustafsson's contour which begins at $z=-1$ is denoted $\gamma_2$ and is drawn 
in green. The intersection of $\gamma_2$ with the Bernstein Ellipse is
denoted by $p_2$.
The Bernstein ellipse, $E_\rho$, is drawn as three contiguous segments.
The left segment, colored grey, corresponds to the Bernstein ellipse
which is not in $C_\rho$, and is denoted $E_\rho\setminus C_\rho$. 
The middle segment,
denoted $C_\rho\setminus \tilde{C}_\rho$, is drawn in blue.
The right segment,
colored black, corresponds to $\tilde{C}_\rho$. The point where
$C_\rho\setminus \tilde{C}_\rho$ ends and where $\tilde{C}_\rho$ begins is 
denoted by $\tilde{p_2}$.
}
\label{fig:leftBeta}
\end{figure}
\begin{figure}[h]
\centering
\includegraphics[width=0.30\textwidth]{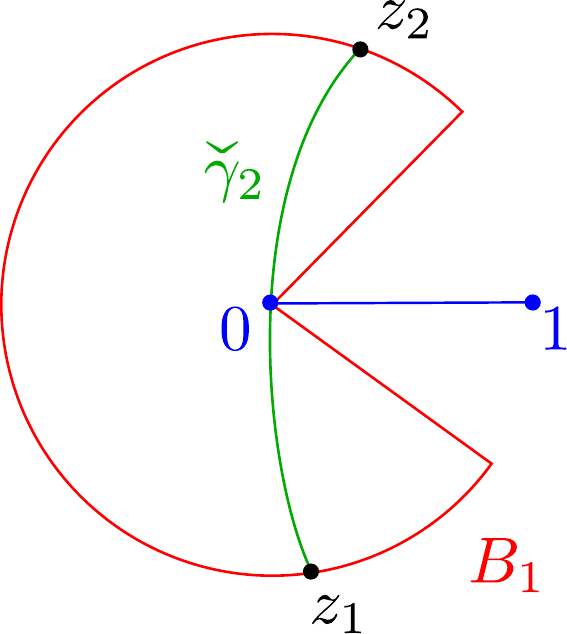}
\caption[Rescaling of a region where the quadrature must hold 
in the vicinity of $C_\rho$'s endpoint]
{ { \bf Rescaling and rotation of region of interest depicted in Figure \ref{fig:leftBeta}.}
Region $B_1$ is a translation, rotation, and rescaling of $\widehat{B}_\delta$ such
that $B_1$ has radius $1$.
For $z\in B_1$, the quadrature formula must satisfy (\ref{eq:l1}).
}
\label{fig:leftBeta1}
\end{figure}
We define $B_\delta$ as
\begin{align}
B_\delta = \big\{z: |\text{Arg}(z)|  \geq
\frac{\pi}{6}, |z| \leq \delta \big\}.
\end{align}
Recall from Section \ref{se:geometrybern} that $\gamma_2$, in the vicinity
of $p_2$, always lies in $p_2+\widehat{B}_\delta$, where
$\widehat{B}_\delta$ is a rotated version of $B_\delta$, such that
the opening in $B_\delta$ is bisected by $C_\rho$ (see Figure \ref{fig:leftBeta}).
We note that, due to the same argument in Section \ref{se:quadfar},
for $z$ outside of $B_\delta$ but elsewhere where the quadrature
must hold, $z$ is well-separated from the domain of integration 
$C_\rho \setminus \tilde{C}_\rho$
, meaning that
a Gauss-Legendre quadrature accurately approximates the
integral. Hence, for the remainder of this section we exclusively focus
on developing a quadrature rule which approximates (\ref{eq:cifCEdge})
for $z \in p_2 + \widehat{B}_\delta$.

For convenience, we rotate, translate, and rescale $C_\rho\setminus
\tilde{C}_\rho$ and
$p_2 + \widehat{B}_\delta$
(see Figure \ref{fig:leftBeta1}), so that the segment 
$C_\rho \setminus \tilde{C}_\rho$ is by approximated by the 
interval $[0,1]$ (i.e., it is translated by $p_2$, rotated, and
scaled by a factor of $m^2$).
Likewise, $\widecheck{\gamma_2}$ represents a similarly translated,
rotated, and scaled copy of $\gamma_2$.
Note that we associate $p_2$ with the point $x=0$ and the point $\tilde{p}_2$
with $x=1$
(see Figure \ref{fig:leftBeta1}).
Consider a quadrature rule
$x_i,\ldots ,x_n$ and
$w_1,\ldots ,w_n$ such that
\begin{align} \label{eq:pointwiseAway}
\bigg| \int_0^1 \frac{\rho(x)}{x-z}dx 
  - \sum_{i=1}^n \frac{\rho(x_i)}{x_i - z}w_i\bigg| < \epsilon
\end{align}
for all $z \in B_1$, where $\rho(x)$ is smooth. 
Such a quadrature, if used to approximate (\ref{eq:cifCEdge}), will
be accurate to precision $\epsilon$, for all $z \in p_2 + \widehat{B}_\delta$.
However, recall that we are integrating $R_m(z)$ given by (\ref{eq:RmzIntme})
over $\gamma_1 \cap E_{\tilde{\rho}}^o$ and $\gamma_2 \cap E_{\tilde{\rho}}^o$.
In the rotated and rescaled coordinates, this means that we are integrating
$z \in \widecheck{\gamma}_2 \subset B_1$, where $\widecheck{\gamma}_2$ 
starts at $z_1 \in \partial B_1$ and ends at $z_2 \in \partial B_1$,
with $|z_1| = |z_2| = 1$.
We thus relax the requirement (\ref{eq:pointwiseAway}) to hold in 
$L^1(\widecheck{\gamma}_2 \cap B_1)$, meaning that the integral
and the quadrature approximation in (\ref{eq:pointwiseAway}) can 
disagree on a set of measure $\epsilon$.

This allows us to relax (\ref{eq:pointwiseAway}) to the condition
\begin{align} \label{eq:l1}
\bigg| \int_0^1 \frac{\rho(x)}{x-z}dz 
  - \sum_{i=1}^n \frac{\rho(x_i)}{x_i - z}w_i\bigg| < \frac{\epsilon}{|z|},
\end{align}
for $z \in B_1$. Thus, for each $\delta > 0$, if $z \in B_\delta$, 
then the quadrature is accurate to within an error $\epsilon/\delta$.
Since the length of $\widecheck{\gamma}_2 \cap B_\delta$ is on the order $\delta$,
the $L^1$ error in the quadrature is $\delta \cdot \epsilon/\delta = \epsilon$.

We can construct this quadrature by first sampling $z_i \in \partial B_1 $,
and then computing a generalized Gaussian quadrature (see \cite{bremer2})
on $x \in [0,1]$,
where (\ref{eq:l1}) is enforced on all the sampled $z_i$'s.
By Cauchy's theorem, if (\ref{eq:l1}) holds on $\partial B_1$, then
it will also hold on $B_1$. However, this still results in a quadrature rule
with several hundred nodes. It turns out that far fewer nodes can be used, due
to the following observation.

Recall that, in the integrand of (\ref{eq:mgfzQuad2}), $R_m(z)$ is multiplied 
by a spherical wave term, denoted as $H(z)$, given by
\begin{align} \label{eq:HSphere2}
H(z) = \frac{e^{-i\kappa \sqrt{1 - \alpha z}} }
  {\sqrt{1-\alpha z} \sqrt{1-z^2} } . 
\end{align}
Because $H(z)$ is smooth near $z=p_2$, we only need that 
\begin{align} \label{eq:l1v2}
\bigg| \int_{\widecheck{\gamma}_2\cap B_1} \sigma(z) \int_0^1 
    \frac{\rho(x)}{x-z}dx dz 
-   \int_{\widecheck{\gamma}_2\cap B_1} \sigma(z) 
    \sum_{i=1}^n \frac{\rho(x_i)}{x_i -z}w_i dz \bigg| < \epsilon,
\end{align}
for all sufficiently smooth functions $\sigma(z)$.
Since $\sigma(z)$ is smooth, it can be represented by a Taylor series
of a small order $k$, so that
\begin{align}
\sigma(z) \approx \sum_{j=0}^k a_j z^j.
\end{align}
Thus, inequality (\ref{eq:l1v2}) becomes
\begin{align} \label{eq:l1v3}
\bigg| \int_{\widecheck{\gamma}_2\cap B_1} z^j \int_0^1 \frac{\rho(x)}{x-z}dx dz 
-   \int_{\widecheck{\gamma}_2\cap B_1} z^j 
    \sum_{i=1}^n \frac{\rho(x_i)}{x_i -z}w_i dz \bigg| < \epsilon,
\end{align}
for each $j = 0,1,\ldots ,k$. Exchanging the order of integration,
\begin{align} \label{eq:l1v4}
\bigg| \int_0^1 \rho(x) \int_{\widecheck{\gamma}_2\cap B_1} \frac{z^j}{x-z}dz dx 
-    \sum_{i=1}^n  \rho(x_i)  \int_{\widecheck{\gamma}_2\cap B_1} 
 \frac{z^j}{x_i -z} dz\, w_i \bigg| < \epsilon.
\end{align}
Recall from Section \ref{se:contourpoly} that 
\begin{align} 
\int_{\widecheck{\gamma}_2\cap B_1} \frac{z^j}{x-z}dz =
    \phi(x) + \psi(x) \log\Big(\frac{x-z_1}{x-z_2} \Big),
\end{align}
where $\phi$ and $\psi$ are polynomials of order $j$, and $z_1$ and $z_2$
are the endpoints of $\widecheck{\gamma}_2\cap B_1$.
Due to the geometry of $B_1$, we have that 
\begin{align}
|z_1 - x| \geq \frac{1}{2},  \qquad |z_2 -x| \geq \frac{1}{2},
\end{align}
for all $x \in [0,1]$.
We also observe that the branch cut of 
\begin{align} \label{eq:logsmooth}
\log \bigg( \frac{x-z_1}{x-z_2} \bigg)
\end{align}
does not intersect $[0,1]$, so (\ref{eq:logsmooth})  smooth on $[0,1]$.
Since $\rho(x)$ is smooth and (\ref{eq:logsmooth}) is smooth,
we observe that the integrand in (\ref{eq:l1v4}), given by
\begin{align}
\rho(x) \int_{\widecheck{\gamma}_2\cap B_1} \frac{z^j}{x-z}dz,
\end{align}
is a smooth function of $x$ for $x \in [0,1]$. 
Hence, a Gauss-Legendre quadrature with $O(1)$ points will satisfy (\ref{eq:l1v4}).

Because (\ref{eq:l1v4}) is satisfied, (\ref{eq:l1v2}) is satisfied, and so
the contour deformation argument 
presented in Section \ref{se:deformation} can be carried out without change,
using a Gauss-Legendre quadrature with $O(1)$ points 
on $C_\rho\setminus \tilde{C}_\rho$ and $O(m)$ points on 
$\tilde{C}_\rho$.

\subsubsection{The Error in the Approximation $R_m(z)$}
\label{sec:errrmz}

In order to derive the approximation~(\ref{eq:mgfzQuad}) to the Green's
function $G_m$, we approximated the Chebyshev polynomial $T_m(z)$ by the
function $R_m(z)$, defined by~(\ref{eq:RmzIntme}) (see
also~(\ref{eq:mgfTF2}) and~(\ref{eq:mgfRF2})). Recall from
Section~\ref{se:bernsteinrule} that, when $\rho=M^{1/m}$, we have that
$\abs{T_m(z)} \approx M$ for all $z\in E_\rho$. If the formula for $R_m(z)$
is evaluated numerically, then the integrand and summand in that formula
will both have size approximately $M$, while the sum, $R_m(z)$, will have
size approximately one for $z\in [-1,1]$.  Thus, due to cancellation error,
$\abs{R_m(z) - T_m(z)} \approx M\epsilon$  for all $z\in [-1,1]$, where
$\epsilon$ is equal to machine precision. This means that, for
$\rho=M^{1/m}$, the approximation for $G_m$ given by
formula~(\ref{eq:mgfzQuad}) has an error of $M \epsilon$.

\subsubsection{The Number of Quadrature Nodes on $\tilde{C}_\rho$}
\label{sec:numquadr}

In Section~\ref{se:quadnear}, we demonstrated that only $O(1)$ nodes are
required on $C_\rho \setminus \tilde{C}_\rho$. In Section~\ref{se:quadfar},
we showed that $O(m)$ nodes are required on $\tilde{C}_\rho$ by pointing out
that the distance from $\tilde{C}_\rho$ to the nearest pole is $1/m^2$ at
its endpoints and $\approx 1/m$ in the middle. We then used
Corollary~\ref{co:approx} and Remark~\ref{re:trefanalcont} to state that the
number of terms required to expand the integrand in~(\ref{eq:cifCt}) in
Chebyshev polynomials is $O(m)$, which means that $O(m)$ nodes are
needed in the corresponding quadrature formula.

In fact, Corollary~\ref{co:approx} and Remark~\ref{re:trefanalcont}  provide
a quantitative estimate for how many terms are required. If a function on a
contour of length $2$ is analytic and bounded by $L$ on a region containing
the contour, where the boundary of the region is separated from the contour
by a distance of $(\log(M)/m)^2$ at the endpoints and $\log(M)/m$ in the
middle, then the number of Chebyshev expansion coefficients required to
approximate that function on the contour to precision $\epsilon$ is
  \begin{align}
k_0 \approx m{(\log(2L) - \log(\epsilon))/\log(M)}.
    \label{k0ez}
  \end{align}

When $\rho=M^{1/m}$, a straightforward modification of the argument in
Section~\ref{se:quadfar} shows that the distance from the contour
$\tilde{C}_\rho$ to the nearest pole is $(\log(M)/m)^2$ at its endpoints and
$\approx \log(M)/m$ in the middle.  Thus, if we take a slightly smaller
region, say, $90\%$ the size, then $L\approx 10 M$. Replacing $\epsilon$ in
formula~(\ref{k0ez}) by $M\epsilon$, since this is the minimum error we can
hope to achieve (see Section~\ref{sec:errrmz}), the estimate for the number
of terms in the Chebyshev expansion of the integrand of~(\ref{eq:cifCt})
becomes
  \begin{align}
k_0 \approx \frac{m}{0.9}{(\log(2\cdot 10M) - \log(M\epsilon))/\log(M)},
  \end{align}
which simplifies to
  \begin{align}
k_0 \approx 1.11 m {(\log(20) - \log(\epsilon))/\log(M)}.
  \end{align}
Taking $\epsilon = 10^{-16}$, we compute the number of terms $k_0$ in the
Chebyshev expansion required to approximate the integrand
of~(\ref{eq:cifCt}) to precision $M\epsilon$ for
$M=10,100,\ldots,10^{12}$ (see Table~\ref{ta:k0needed}).  Likewise,
taking $\epsilon = 10^{-34}$, we compute the number of terms $k_0$ for
$M=10,100,\ldots,10^{15}$ (see Table~\ref{ta:k0needed_quad}). We note
that, if $k_0$ Chebyshev expansion coefficients are required to approximate
the integrand, then the integral~(\ref{eq:cifCt}) can be evaluated using
a Gauss-Legendre quadrature with approximately $k_0/2$ points, since that a
Gauss-Legendre quadrature integrates approximately twice as many polynomials
as the number of quadrature points.

\begin{table}[H]
\begin{center}
\begin{tabular}{cccc}
\multirow{2}{*}{$$}
$M$& $M\epsilon$ & $k_0$ & $k_0/2$ \\
\midrule
\addlinespace[.5em]
10   & $10^{-15}$ & 19.2m & 9.6m \\
\addlinespace[.25em]
100   & $10^{-14}$ & 9.6m & 4.8m \\
\addlinespace[.25em]
$10^3$   & $10^{-13}$ & 6.4m & 3.2m \\
\addlinespace[.25em]
$10^6$   & $10^{-10}$ & 3.2m & 1.6m \\
\addlinespace[.25em]
$10^9$   & $10^{-7}$ & 2.13m & 1.07m \\
\addlinespace[.25em]
$10^{12}$   & $10^{-4}$ & 1.6m & 0.8m
 
\end{tabular}
\caption[The required number of Gauss-Legendre nodes on $\tilde{C}_\rho$ to
approximate (\ref{eq:cifCt}), in double precision]{
{\bf The required number of Gauss-Legendre nodes on $\tilde{C}_\rho$
to approximate ~(\ref{eq:cifCt}), in double precision.} In this table, 
$k_0/2$ is the required number of
nodes, $\epsilon=10^{-16}$, and $\rho=M^{1/m}$.
}
\label{ta:k0needed}
\end{center}
\end{table}

\begin{table}[H]
\begin{center}
\begin{tabular}{cccc}
\multirow{2}{*}{$$}
$M$& $M\epsilon$ & $k_0$ & $k_0/2$ \\
\midrule
\addlinespace[.5em]
10   & $10^{-33}$ & 39.2m & 19.6m \\
\addlinespace[.25em]
100   & $10^{-32}$ & 19.6m & 9.8m \\
\addlinespace[.25em]
$10^3$   & $10^{-31}$ & 13.1m & 6.53m \\
\addlinespace[.25em]
$10^6$   & $10^{-28}$ & 6.53m & 3.27m \\
\addlinespace[.25em]
$10^9$   & $10^{-25}$ & 4.35m & 2.18m \\
\addlinespace[.25em]
$10^{12}$   & $10^{-22}$ & 3.27m & 1.63m \\
\addlinespace[.25em]
$10^{15}$   & $10^{-19}$ & 2.61m & 1.31m
 
\end{tabular}
\caption[The required number of Gauss-Legendre nodes on $\tilde{C}_\rho$ to
approximate (\ref{eq:cifCt}), in quadruple precision]{
{\bf The required number of Gauss-Legendre nodes on $\tilde{C}_\rho$
to approximate~(\ref{eq:cifCt}), in quadruple precision.} In this table, $k_0/2$ is
the required number of nodes, $\epsilon=10^{-34}$, and $\rho=M^{1/m}$.
}
\label{ta:k0needed_quad}
\end{center}
\end{table}

Finally, we observe that, in practice, we can place a single $O(m)$
Gauss-Legendre quadrature with $k_0/2$ nodes on the entire contour $C_\rho$, rather than
placing two $O(1)$ quadratures on each part of $C_\rho\setminus
\tilde{C}_\rho$ and one $O(m)$ quadrature on $\tilde{C}_\rho$.  Also, we
note that, in practice, the minimum number of quadrature nodes required to
achieve the accuracy $M\epsilon$ matches the estimates in
Tables~\ref{ta:k0needed} and~\ref{ta:k0needed_quad} very closely.

\subsection{Summary of the Algorithm}
\label{se:sumalg}

Recall from Section \ref{sec:mgf} that $G_m$ is a function 
of $\kappa$, $m$, and $\alpha$. 
Recall also that $\alpha$ can be determined
from $\beta_-$ (see formula \ref{eq:betaBoth})), and vice versa. 
We consider $G_m$ as
a function of
$\kappa$, $m$, and $\beta_1$.
We compute $G_m$ as follows.
Recall from Section \ref{se:evalalpha} the formula for $G_m$,
\begin{align} 
\begin{split} \label{eq:mastersum}
G_m \approx 
\frac{-4}{\sqrt{\alpha}} \int_0^{\tau_1} \frac{F_1(\tau)}
{\sqrt{\tau^2 +2i \beta_-}} d\tau + 
\frac{4i}{\sqrt{\alpha}} \int_0^{\tau_2} \frac{F_2(\tau)}
  {\sqrt{\tau^2 +2i \beta_+}} d\tau\\
  + \sum_{i=1}^n 
       \frac{e^{-i\kappa \sqrt{1-\alpha v_i}}}
            {\sqrt{1-\alpha v_i}\sqrt{1-v_i^2} } 
            T_m(v_i) dv_i w_i ,
            \end{split}
\end{align}
where $F_1(\tau)$ and $F_2(\tau)$ are smooth functions corresponding to the 
$\gamma_1$ and $\gamma_2$ contours, respectively
defined by (\ref{eq:smoothF1}) and (\ref{eq:smoothF2}), 
$\tau_1$ and $\tau_2$ are positive parameters such that  
$\gamma_1(\tau_1)$ and $\gamma_2(\tau_2)$ intersect $E_\rho$
(see Section \ref{se:intersect})
, respectively, 
$T_m$ is the $m$th order Chebyshev polynomial,
and  
\begin{align}
\beta_- = \sqrt{1/\alpha -1} , \qquad \beta_+ = \sqrt{1/\alpha +1}.
\end{align}
Recall from Section \ref{se:geometrybern} that both $\gamma_1 \cap E_\rho$
and $\gamma_2 \cap E_\rho$ 
have length $\approx 1/m^2$. Hence, $T_m(z)$ oscillates at most once along
each contour.
By construction, on Gustafsson's contours 
(see Section \ref{se:gustafsson}), the spherical wave portion of the
integrand does not oscillate. Hence, the entire integrand oscillates at
most once.
By the argument in Section \ref{se:evalalpha} the integrand associated 
with the $\gamma_2$
contour is always smooth and hence can be evaluated with 
an $O(1)$ Gauss-Legendre
quadrature.

The integrand associated with the contour $\gamma_1$ has a 
singularity for $\beta_- \approx 0$. For this case, we follow
the method in Section \ref{se:evalalpha} and evaluate the portion near the
singularity by expanding the function $F_1(\tau)$ into its Taylor series,
then use the recurrence 
described in Section \ref{se:evalgamma1}. Due to the smoothness of $F_1(\tau)$,
this integral is computed with an $O(1)$ Gauss-Legendre quadrature.
The remainder of
the integral is smooth and oscillates at most once, and hence is evaluated
with an $O(1)$ Gauss-Legendre quadrature. Hence, both integrals in 
(\ref{eq:mastersum}) are evaluated in $O(1)$ operations.

The remaining term in (\ref{eq:mastersum}) is a sum of residues evaluated
on $C_\rho$, where $C_\rho$ denotes the portion of a Bernstein ellipse connecting
$\gamma_1$ and $\gamma_2$ (see Section \ref{se:choiceRat}).
We select the residues $v_1,\ldots,v_n$ and weights $w_1,\ldots,w_n$ by
constructing a quadrature which approximates
\begin{align}
\oint_{C_\rho} \frac{T_m(v)}{v-z} dv ,
\end{align}
which holds for values of $z$ relevant to the evaluation of $G_m$
(see Section \ref{se:quadnear}).
By the argument in Section \ref{se:constructboth}, this is accomplished
using $O(m)$ Gauss Legendre nodes on $C_\rho$.

Therefore, the entire cost of our algorithm for $G_m$ is $O(m)$ and completely
independent of both $\kappa$ and $\beta_-$. Lastly, since the algorithm
is entirely quadrature based, it is
embarrassingly parallelizable. Finally, we note that implementing 
this algorithm requires certain numerical issues to be treated with care,
which we describe in
Section \ref{se:numericmisc}.
\subsection{Numerical Miscellanea}
\label{se:numericmisc}
This section contains various facts required for the accurate evaluation of
some
of the quantities and formulas used by the numerical algorithm of this
manuscript.
\subsubsection{Evaluating the Semi-major and Semi-minor Axes of the Bernstein
Ellipse}
We will need to compute the quantities $a-1$ and $b$, where $a$ is the
semi-major axis of the Bernstein ellipse $E_\rho$ described in 
Section~\ref{se:chebyshevEvalOnBE}
and $b$ is the semi-minor axis. When $\rho \approx 1$, we have that $a\approx
1$, so computing $a-1 \approx 0$ directly from $a$ will result in a large
cancellation error. Likewise, when $\rho \approx 1$, we have that $b \approx
0$, so computing it from formula~(\ref{eq:bern}) will also result in cancellation
error.  Instead of computing
  \begin{align}
\rho = M^{\frac{1}{m}},
  \end{align}
as in formula~(\ref{eq:rhoChoice}), we instead compute the value of the semi-minor
axis $b$ directly using the formula
  \begin{align}
&\hspace*{-2em} 
b = \sinh(\log(M)/m) = \frac{1}{2} (e^{\log(M)/m} - e^{-\log(M)/m})
= \frac{1}{2} (\rho - \frac{1}{\rho}).
  \end{align}
which can be done stably even when $m$ is very large.  We then compute the
value of the semi-major axis $a$ from $b$ using the formula
  \begin{align}
a=\sqrt{b^2 + 1} = \cosh(\log(M)/m) = \frac{1}{2}(\rho + \frac{1}{\rho}).
  \end{align}
The value of $a-1$ is also given by the formula
  \begin{align}
a-1 = \frac{b^2}{a+1},
  \end{align}
and is likewise derived from identities involving the hyperbolic functions. 
\subsubsection{The Evaluation of $\beta_-$ when $\alpha \approx 1$}
When $\alpha\approx 1$, the quantity $\beta_- = \sqrt{1/\alpha - 1}$ will be
computed with a very large cancellation error. Thus, instead of using
$\alpha$ as an input parameter to our algorithm, we use $\beta_-$.  The
quantity $\alpha$ can be obtained from $\beta_-$ by the formula
$\alpha=1/(\beta_-^2 +1)$, and $\beta_-$ can be evaluated to full relative
precision from~(\ref{eq:beta_-eff}).

\subsubsection{The Evaluation of the Quantity $\sqrt{1-\alpha z}$ when $\alpha
\approx 1$ and $z\approx 1$}

Sometimes we will need to evaluate the quantity $\sqrt{1-\alpha z}$ on 
Gustafsson's contours when $\alpha \approx 1$ and $z \approx 1$. As mentioned in
Section~\ref{sec:mgf}, we use $\beta_-$ as an input parameter to prevent a loss of
accuracy. With the parameterization $z=\tilde \gamma_{1}(\tau)$ of the
contour $\gamma_1$, given by~(\ref{eq:gamma_1kirill}), we have
  \begin{align}
\sqrt{1-\alpha \tilde \gamma_{1}(\tau)} = -i \sqrt{\alpha}(\tau^2 +
i\beta_{-}),
  \end{align}
as stated in~(\ref{eq:dzsqrt}). This formula can be evaluated to relative precision when
$\beta_-\approx 0$ and $\tau \approx 0$ (equivalently, when $\alpha \approx
1$ and $z \approx 1$).

\subsubsection{The Evaluation of the Intersection Points $p_1$ and $p_2$}

In Section~\ref{se:intersect}, we determine the intersection points of the Gustafsson
contours $\gamma_1$ and $\gamma_2$ with the Bernstein ellipse $E_\rho$, in
both the Bernstein ellipse parameter $\theta$ and Gustafsson's contours'
parameter $\tau$.  These formulas all involve solving a quadratic equation.
To solve it accurately, we use the observation in Section~\ref{se:quadraticeq}.

\subsubsection{The Evaluation of $\arccos(s)$ for $s\approx 1$}

In the construction of the intersection points of the Gustaffson contours
$\gamma_1$ and $\gamma_2$ with the Bernstein ellipse $E_\rho$, in the
Bernstein ellipse parameter $\theta$, it is sometimes the case that $\theta
= \arccos(s) \approx 0$ and $s\approx 1$ in formula~(\ref{eq:intersectTheta}).  
The condition
number of $\arccos(s)$ becomes infinite near $s = 1$, so a straightforward
application of the formula results in a loss of accuracy.  We observe that
the function $\arccos(1+z)$ can be evaluated accurately for $z\approx 0$ (by,
for example, Taylor series). Thus, instead of solving the quadratic equation
for $s$, we solve for $s-1$, and then evalate $\arccos(1+z)$ for $z=s-1$.

\subsubsection{The Evaluation of $T_m(z)$ when $z\approx \pm 1$}

Since we use the parameterizations $z=\tilde \gamma_1(\tau)$ 
and $z=\tilde \gamma_2(\tau)$ for 
Gustafsson's contours, we are able to 
evaluate $z-1$ and $z+1$
to full relative precision on  $\gamma_1$ and $\gamma_2$, respectively.
However, the formula
  \begin{align}
T_m(z) = \cos(m \arccos(z))
  \end{align}
requires the evaluation of $\arccos(z)$ near $z=1$, where its condition
number is infinite. Instead, we observe that $\arccos(1+z)$ can be evaluated
to full relative accuracy near $z=0$ (using, for example, Taylor series). 
Thus, we evaluate 
  \begin{align}
T_m(z) = \cos(m \arccos(1+w))
  \end{align}
accurately for $w=z-1 \approx 0$ with $z\in \gamma_1$. Likewise, we use the fact that
$\arccos(-z)=\pi-\arccos(z)$ to evaluate
  \begin{align}
T_m(z) = (-1)^m \cos(m \arccos(1+w))
  \end{align}
accurately for $w=-z-1 \approx 0$ with $z \in \gamma_2$.

\subsubsection{The Limits of Integration on Gustafsson's Contours}
\label{se:limitsofint}

In order to approximate the modal Green's function $G_m$ using
formula~(\ref{eq:mastersum}), it is necessary to evaluate the integrals
  \begin{align} 
&\hspace*{-4em} \int_0^{\tau_1} \frac{F_1(\tau)}
{\sqrt{\tau^2 +2i \beta_-}} d\tau, \qquad \text{with} \qquad 
F_1(\tau)=\frac{e^{-i\kappa \sqrt{1 - \alpha \tilde{\gamma_1}(\tau)}}}
  {\sqrt{1+\tilde{\gamma_1}(\tau)}} T_m(\tilde{\gamma_1}(\tau)),
    \label{eq:F1lim}
  \end{align}
and
  \begin{align}
&\hspace*{-4em} \int_0^{\tau_2} \frac{F_2(\tau)}
  {\sqrt{\tau^2 +2i \beta_+}} d\tau, \qquad \text{with} \qquad 
F_2(\tau) = \frac{e^{-i\kappa \sqrt{1 -\alpha\tilde{\gamma_2}(\tau)}}}
  {\sqrt{1+\tilde{\gamma_2}(\tau)}} T_m(\tilde{\gamma_2}(\tau)),
    \label{eq:F2lim}
  \end{align}
where $\tilde\gamma_1(\tau_1)$ and $\tilde\gamma_2(\tau_2)$ are,
respectively, the intersection points of $\gamma_1$ and $\gamma_2$ with
$E_\rho$ (see~(\ref{eq:smoothF1}) and~(\ref{eq:smoothF2}).  The integrands
decay exponentially in $\tau$ at a rate proportional to $\kappa$. Thus, when
$\kappa$ is large, care must be taken to choose the domains of integration
when evaluating the integrals numerically.

In order to evaluate the integrals~(\ref{eq:F1lim}) and~(\ref{eq:F2lim}) to
within an error of $M\epsilon$ (see Section~\ref{sec:errrmz}), the integrals
only need to be evaluated over values of $\tau$ for which $F_1(\tau) \ge
M\epsilon$ and $F_2(\tau) \ge M\epsilon$, respectively. 
Since $\tilde\gamma_1(\tau) \in E_\rho^o$ for all $\tau \in [0,\tau_1)$ and
$\tilde\gamma_2(\tau) \in E_\rho^o$ for all $\tau \in [0,\tau_2)$,
by~(\ref{eq:chebyBernIn3}), it follows that, when $\rho=M^{1/m}$,
$\abs{T_m(\tilde\gamma_1(\tau))} < M$ for $\tau \in [0,\tau_1)$ and
$\abs{T_m(\tilde\gamma_2(\tau))} < M$ for $\tau \in [0,\tau_2)$. 
We observe then that
  \begin{align}
F_1(\tau) \approx M e^{-i \kappa \sqrt{1-\alpha \tilde\gamma_1(\tau)}}
\qquad \text{and} \qquad
F_2(\tau) \approx M e^{-i \kappa \sqrt{1-\alpha \tilde\gamma_2(\tau)}}.
  \end{align}
By~(\ref{eq:dzsqrt2}),
  \begin{align}
\sqrt{1-\alpha \tilde\gamma_1(\tau)} = -i\sqrt{\alpha} (\tau^2 + i\beta_-), 
  \end{align}
and, likewise,
  \begin{align}
\sqrt{1-\alpha \tilde\gamma_2(\tau)} = -i\sqrt{\alpha} (\tau^2 + i\beta_+). 
  \end{align}
Thus,
  \begin{align}
\abs{e^{-i \kappa \sqrt{1-\alpha \tilde\gamma_1(\tau)}}}
= \abs{e^{-\kappa \sqrt{\alpha}\tau^2}},
  \end{align}
and
  \begin{align}
\abs{e^{-i \kappa \sqrt{1-\alpha \tilde\gamma_2(\tau)}}}
= \abs{e^{-\kappa \sqrt{\alpha}\tau^2}}.
  \end{align}
Solving the equation
  \begin{align}
e^{-\kappa \sqrt{\alpha}\tau^2} = \epsilon
  \end{align}
for $\tau$, we arrive at the formula
  \begin{align}
\tau_c = \sqrt{ \frac{-\log(\epsilon)}{\kappa\sqrt{\alpha}} },
  \end{align}
from which we see that $F_1(\tau_c) \approx M\epsilon$ and $F_2(\tau_c)
\approx M\epsilon$. Thus, we evaluate the integrals~(\ref{eq:F1lim})
and~(\ref{eq:F2lim}) over Gustafsson's contours only on the intervals
$[0,\min(\tau_1,\tau_c))$ and $[0,\min(\tau_2,\tau_c))$, respectively.
Hence, rather than evaluate (\ref{eq:F1lim}) and (\ref{eq:F2lim}),
we instead evaluate
  \begin{align}
&\hspace*{-4em} \int_0^{\min(\tau_1, \tau_c)} \frac{F_1(\tau)}
{\sqrt{\tau^2 +2i \beta_-}} d\tau  \qquad \text{and} \qquad
\int_0^{\min(\tau_2,\tau_c)} \frac{F_2(\tau)}
  {\sqrt{\tau^2 +2i \beta_+}} d\tau.
 \end{align}
\section{Numerical Experiments}
\label{se:numex}
In Sections \ref{se:performbetam}-\ref{se:parallel}
we characterize the speed and accuracy of our method.
Importantly, as demonstrated below, we achieve full precision
for all possible ranges of 
$\beta_-$ and $\kappa$, and our algorithm's performance is
completely independent of $\beta_-$ and $\kappa$.
 
We use adaptive integration applied to (\ref{eq:mgf}) as the gold standard,
and measure the error of our algorithm by comparing the two results.
We use the change of variables $\phi = x^3$, $d\phi = 3x^2 dx$, to ensure
that adaptive integration is accurate when $\alpha \approx 1$.
We compute the $1-\alpha \cos(\phi)$ term using the double angle formula
to avoid cancellation error. 
The error in evaluating the modal Green's function for very large
$\kappa$ is not measured, as adaptive integration is too expensive
and no prior method can compute the modal 
Green's function for large $\kappa$.

An implementation of the previously described algorithm was written in
Fortran 77.  In our implementation, we chose $M=100$, and used $5m$
quadrature nodes on $C_\rho$ in double precision, and $11m$ quadrature nodes
on $C_\rho$ in extended precision (see Section~\ref{sec:numquadr}).
The timing and performance experiments in Sections
\ref{se:performbetam}--\ref{se:performfourier}
were performed using a consumer
laptop with a four-core 2.6 GHz
Intel i7 processor running a timing script in MATLAB 2018b with two threads.
The parallel computing experiment
in Section \ref{se:parallel} was run on a server with a 16-core Intel
Xeon 2.9 GHz processor.
\subsubsection{The Interpretation of $\beta_-$ and $\kappa$}
Recall from Section \ref{sec:mgf} that the modal Green's function 
can be thought of as a function of four parameters: $m$, $k$, $\alpha$, and $R_0$.
After the introduction of the parameters $\kappa$ and $\beta_-$ (see 
formula (\ref{eq:mgfexp})), the $R_0$ 
term exclusively appears as a $1/R_0$ scaling outside the integral. 
Hence, with this parameterization, $R_0$ is of no independent consequence to the
performance of our algorithm, so we only characterize our algorithm's performance
as a function of $\kappa$, $\beta_-$, and $m$.
Recall also that $\beta_-$ is defined as
\begin{align} \label{eq:beta_-eff2}
\beta_- = \frac{\Delta}{\rho_0},
\end{align}
where $\Delta$ is the minimum source-to-target distance and
$\rho_0 = 2rr'$, with $r$ and $r'$ being the radial distances of the 
source and target in cylindrical coordinates.
Recall finally from Section \ref{sec:mgf} that $\kappa$ is defined as
\begin{align}
\kappa = kR_0.
\end{align}

\subsection{Performance of the Algorithm with Varying Source-to-Target Distance}
\label{se:performbetam}
We examined the performance of our algorithm over a wide range of source-to-target
distances.
As shown in Table \ref{ta:betam1double} and Table \ref{ta:betam1quad}, our
algorithm's performance is independent of $\beta_-$.


\vspace{0.25in}
\begin{table}[H]
\begin{center}
\begin{tabular}{ccccc}
 & \multicolumn{2}{c}{$\kappa=10,000,\,m=10$} & \multicolumn{2}{c}{$\kappa=10,000,\,m=1000$}\\
\multirow{2}{*}{$$} 
$\beta_-$& Evaluation Time      &  Absolute Error               & Evaluation Time      & Absolute Error           \\
\midrule
\addlinespace[.5em]
$10^{15}$   & 4.66\e{-5} secs    & 1.47\e{-13}   & 1.33\e{-3} secs    & 7.29\e{-13} \\
\addlinespace[.25em]
$10^{12}$   & 4.76\e{-5} secs    & 1.53\e{-13}   & 1.34\e{-3} secs    & 7.29\e{-13} \\
\addlinespace[.25em]
$10^{9}$   & 4.73\e{-5} secs    & 1.46\e{-13}   & 1.34\e{-3} secs    & 7.29\e{-13} \\
\addlinespace[.25em]
$10^{6}$   & 5.00\e{-5} secs    & 2.55\e{-11}   & 1.41\e{-3} secs    & 2.11\e{-12} \\
\addlinespace[.25em]
$10^{3}$   & 4.89\e{-5} secs    & 6.03\e{-12}   & 1.41\e{-3} secs    & 2.64\e{-12} \\
\addlinespace[.25em]
$10^{0}$   & 3.76\e{-5} secs    & 3.34\e{-14}   & 1.44\e{-3} secs    & 4.71\e{-13} \\
\addlinespace[.25em]
$10^{-3}$   & 3.57\e{-5} secs    & 3.43\e{-14}   & 1.44\e{-3} secs    & 1.79\e{-12} \\
\addlinespace[.25em]
$10^{-6}$   & 3.51\e{-5} secs    & 3.92\e{-14}   & 1.44\e{-3} secs    & 3.43\e{-13} \\
\addlinespace[.25em]
$10^{-9}$   & 3.59\e{-5} secs    & 5.50\e{-14}   & 1.44\e{-3} secs    & 5.27\e{-13} \\
\addlinespace[.25em]
$10^{-12}$   & 3.52\e{-5} secs    & 3.33\e{-14}   & 1.44\e{-3} secs    & 5.28\e{-13} \\
\addlinespace[.25em]
$10^{-15}$   & 3.46\e{-5} secs    & 1.69\e{-14}   & 1.44\e{-3} secs    & 4.81\e{-13} \\
\addlinespace[.25em]
$10^{-18}$   & 3.45\e{-5} secs    & 3.95\e{-14}   & 1.44\e{-3} secs    & 4.84\e{-13} \\
\addlinespace[.25em]
$10^{-21}$   & 3.44\e{-5} secs    & 6.63\e{-14}   & 1.44\e{-3} secs    & 5.11\e{-13} \\
\end{tabular}
\caption[The evaluation of the modal Green's function for large wavenumber, in
double precision]{
{\bf The evaluation of the modal Green's function in double precision for a 
large wavenumber ($\kappa=10,000$)}.
The error is evaluated by using adaptive Gaussian quadrature as the gold standard.
}
\label{ta:betam1double}
\end{center}
\end{table}


\vspace{0.25in}
\begin{table}[H]
\begin{center}
\begin{tabular}{ccccc}
 & \multicolumn{2}{c}{$\kappa=10,000,\,m=10$} & \multicolumn{2}{c}{$\kappa=10,000,\,m=1000$}\\
\multirow{2}{*}{$$} 
$\beta_-$& Evaluation Time      &  Absolute Error               & Evaluation Time      & Absolute Error           \\
\midrule
\addlinespace[.5em]
$10^{15}$   & 6.62\e{-3} secs    & 1.03\e{-30}   & 2.14\e{-1} secs    & 2.11\e{-30} \\
\addlinespace[.25em]
$10^{12}$   & 6.51\e{-3} secs    & 1.73\e{-29}   & 2.10\e{-1} secs    & 2.24\e{-30} \\
\addlinespace[.25em]
$10^{9}$   & 6.51\e{-3} secs    & 1.58\e{-29}   & 2.11\e{-1} secs    & 1.44\e{-30} \\
\addlinespace[.25em]
$10^{6}$   & 6.88\e{-3} secs    & 8.94\e{-30}   & 2.10\e{-1} secs    & 1.58\e{-30} \\
\addlinespace[.25em]
$10^{3}$   & 6.88\e{-3} secs    & 1.78\e{-29}   & 2.11\e{-1} secs    & 1.99\e{-30} \\
\addlinespace[.25em]
$10^{0}$   & 5.82\e{-3} secs    & 2.89\e{-32}   & 2.11\e{-1} secs    & 6.59\e{-31} \\
\addlinespace[.25em]
$10^{-3}$   & 5.84\e{-3} secs    & 2.31\e{-32}   & 2.11\e{-1} secs    & 1.07\e{-30} \\
\addlinespace[.25em]
$10^{-6}$   & 5.74\e{-3} secs    & 2.06\e{-31}   & 2.11\e{-1} secs    & 1.82\e{-30} \\
\addlinespace[.25em]
$10^{-9}$   & 6.25\e{-3} secs    & 3.45\e{-33}   & 2.12\e{-1} secs    & 2.63\e{-31} \\
\addlinespace[.25em]
$10^{-12}$   & 6.27\e{-3} secs    & 2.07\e{-32}   & 2.12\e{-1} secs    & 1.55\e{-31} \\
\addlinespace[.25em]
$10^{-15}$   & 6.22\e{-3} secs    & 9.65\e{-32}   & 2.12\e{-1} secs    & 5.53\e{-31} \\
\addlinespace[.25em]
$10^{-18}$   & 6.06\e{-3} secs    & 1.58\e{-31}   & 2.11\e{-1} secs    & 5.60\e{-31} \\
\addlinespace[.25em]
$10^{-21}$   & 5.92\e{-3} secs    & 2.18\e{-31}   & 2.11\e{-1} secs    & 6.36\e{-31} \\

\end{tabular}
\caption[The evaluation of the modal Green's function for large wavenumber, in
quadruple precision]{
{\bf The evaluation of the modal Green's function in quadruple precision for a 
large wavenumber ($\kappa=10,000$)}.
The error is evaluated by using adaptive Gaussian quadrature as the gold standard.
}
\label{ta:betam1quad}
\end{center}
\end{table}

\subsection{Performance of the Algorithm with Varying $\kappa$}
\label{se:performkappa}
We examined the performance of our algorithm over a wide range of 
values for $\kappa$.
As shown in Tables \ref{ta:kappa1double}-\ref{ta:kappa2quad}, our
algorithm's performance is independent of $\kappa$.
\vspace{0.25in}
\begin{table}[H]
\begin{center}
\begin{tabular}{ccccc}
\multirow{2}{*}{$$}
& \multicolumn{2}{c}{$\beta_-=1\,,m=10$} & \multicolumn{2}{c}{$\beta_-=1\,,m=1000$}\\
$\kappa$ & Evaluation Time      & Absolute Error         & Evaluation Time      & Absolute Error                 \\
\midrule
\addlinespace[.5em]
$10^{-6}$   & 1.22\e{-4} secs    & 1.45\e{-13}   & 1.84\e{-3} secs    & 2.05\e{-12} \\
\addlinespace[.25em]
$10^{-3}$   & 6.26\e{-5} secs    & 1.50\e{-13}   & 1.64\e{-3} secs    & 2.05\e{-12} \\
\addlinespace[.25em]
$10^{0}$   & 6.08\e{-5} secs    & 1.61\e{-13}   & 1.66\e{-3} secs    & 2.02\e{-12} \\
\addlinespace[.25em]
$10^{1}$   & 1.36\e{-4} secs    & 2.71\e{-14}   & 2.50\e{-3} secs    & 1.83\e{-12} \\
\addlinespace[.25em]
$10^{2}$   & 1.02\e{-4} secs    & 4.94\e{-15}   & 2.43\e{-3} secs    & 2.23\e{-12} \\
\addlinespace[.25em]
$10^{3}$   & 4.56\e{-5} secs    & 1.30\e{-14}   & 1.73\e{-3} secs    & 1.51\e{-12} \\
\addlinespace[.25em]
$10^{4}$   & 3.89\e{-5} secs    & 3.34\e{-14}   & 1.69\e{-3} secs    & 1.03\e{-12} \\
\addlinespace[.25em]
$10^{5}$   & 3.94\e{-5} secs    & 2.25\e{-14}   & 1.70\e{-3} secs    & 5.05\e{-13} \\
\addlinespace[.25em]
$10^{6}$   & 4.15\e{-5} secs    & 2.75\e{-13}   & 1.75\e{-3} secs    & 3.32\e{-13} \\
\addlinespace[.25em]
$10^{7}$   & 3.78\e{-5} secs    & --   & 8.39\e{-4} secs    & -- \\
\addlinespace[.25em]
$10^{8}$   & 3.91\e{-5} secs    & --   & 8.33\e{-4} secs    & -- \\
\addlinespace[.25em]
$10^{9}$   & 4.46\e{-5} secs    & --   & 8.23\e{-4} secs    & -- \\
\addlinespace[.25em]
$10^{12}$   & 3.77\e{-5} secs    & --   & 8.14\e{-4} secs    & -- \\
\addlinespace[.25em]
$10^{15}$   & 4.46\e{-5} secs    & --   & 8.18\e{-4} secs    & -- \\
\addlinespace[.25em]
$10^{18}$   & 3.98\e{-5} secs    & --   & 8.33\e{-4} secs    & -- \\

\end{tabular}
\caption[The evaluation of the modal Green's function for varying wavenumber
when the source and target are not close, in
double precision]{
{\bf The evaluation of the modal Green's function in double precision for
varying $\kappa$ ($\beta_-=1$)}.
The error is evaluated by using adaptive Gaussian quadrature as the gold standard.
Note for $\kappa > 10^6$, the resource requirements of prior methods becomes
excessive.
}
\label{ta:kappa1double}
\end{center}
\end{table}

\vspace{0.25in}
\begin{table}[H]
\begin{center}
\begin{tabular}{ccccc}
\multirow{2}{*}{$$}
& \multicolumn{2}{c}{$\beta_-=1\,,m=10$} & \multicolumn{2}{c}{$\beta_-=1\,,m=1000$}\\
$\kappa$ & Evaluation Time      & Absolute Error  & Evaluation Time      & Absolute Error   \\
\midrule
\addlinespace[.5em]
$10^{-6}$   & 7.17\e{-3} secs    & 4.06\e{-31}   & 2.07\e{-1} secs    & 1.12\e{-30} \\
\addlinespace[.25em]
$10^{-3}$   & 6.43\e{-3} secs    & 3.99\e{-31}   & 2.07\e{-1} secs    & 1.12\e{-30} \\
\addlinespace[.25em]
$10^{0}$   & 6.92\e{-3} secs    & 3.94\e{-31}   & 2.09\e{-1} secs    & 1.20\e{-30} \\
\addlinespace[.25em]
$10^{1}$   & 7.09\e{-3} secs    & 1.34\e{-31}   & 2.11\e{-1} secs    & 9.06\e{-31} \\
\addlinespace[.25em]
$10^{2}$   & 6.93\e{-3} secs    & 7.74\e{-34}   & 2.10\e{-1} secs    & 1.32\e{-30} \\
\addlinespace[.25em]
$10^{3}$   & 6.81\e{-3} secs    & 9.49\e{-33}   & 2.11\e{-1} secs    & 1.01\e{-30} \\
\addlinespace[.25em]
$10^{4}$   & 5.79\e{-3} secs    & 2.89\e{-32}   & 2.12\e{-1} secs    & 6.59\e{-31} \\
\addlinespace[.25em]
$10^{5}$   & 5.78\e{-3} secs    & 1.59\e{-31}   & 2.12\e{-1} secs    & 6.05\e{-31} \\
\addlinespace[.25em]
$10^{6}$   & 5.76\e{-3} secs    & 2.23\e{-31}   & 2.11\e{-1} secs    & 4.69\e{-31} \\
\addlinespace[.25em]
$10^{7}$   & 5.76\e{-3} secs    & --   & 2.11\e{-1} secs    & -- \\
\addlinespace[.25em]
$10^{8}$   & 5.82\e{-3} secs    & --   & 1.52\e{-1} secs    & -- \\
\addlinespace[.25em]
$10^{9}$   & 5.72\e{-3} secs    & --   & 1.52\e{-1} secs    & -- \\
\addlinespace[.25em]
$10^{12}$   & 5.72\e{-3} secs    & --   & 1.52\e{-1} secs    & -- \\
\addlinespace[.25em]
$10^{15}$   & 5.70\e{-3} secs    & --   & 1.52\e{-1} secs    & -- \\
\addlinespace[.25em]
$10^{18}$   & 5.70\e{-3} secs    & --   & 1.52\e{-1} secs    & -- \\

\end{tabular}
\caption[The evaluation of the modal Green's function for varying wavenumber
when the source and target are not close, in
quadruple precision]{
{\bf The evaluation of the modal Green's function in quadruple precision for a
for varying $\kappa$ with large source-to-target distance  ($\beta_-=1$)}.
The error is evaluated by using adaptive Gaussian quadrature as the gold standard.
Note for $\kappa > 10^6$, the resource requirements of prior methods becomes
excessive.
}
\label{ta:kappa1quad}
\end{center}
\end{table}

\vspace{0.25in}
\begin{table}[H]
\begin{center}
\begin{tabular}{ccccc}
\multirow{2}{*}{$$}

& \multicolumn{2}{c}{$\beta_-=10^{-12}\,,m=10$} & \multicolumn{2}{c}{$\beta_-=10^{-12}\,,m=1000$}\\
$\kappa$ & Evaluation Time      & Absolute Error  & Evaluation Time      & Absolute Error   \\
\midrule
\addlinespace[.5em]
$10^{-6}$   & 4.49\e{-5} secs    & 3.08\e{-13}   & 1.34\e{-3} secs    & 2.90\e{-11} \\
\addlinespace[.25em]
$10^{-3}$   & 4.45\e{-5} secs    & 2.90\e{-13}   & 1.37\e{-3} secs    & 2.88\e{-11} \\
\addlinespace[.25em]
$10^{0}$   & 4.77\e{-5} secs    & 1.90\e{-13}   & 1.40\e{-3} secs    & 2.84\e{-11} \\
\addlinespace[.25em]
$10^{1}$   & 4.79\e{-5} secs    & 4.35\e{-14}   & 1.41\e{-3} secs    & 2.74\e{-11} \\
\addlinespace[.25em]
$10^{2}$   & 4.61\e{-5} secs    & 1.80\e{-14}   & 1.43\e{-3} secs    & 2.29\e{-11} \\
\addlinespace[.25em]
$10^{3}$   & 3.57\e{-5} secs    & 1.07\e{-14}   & 1.44\e{-3} secs    & 4.19\e{-12} \\
\addlinespace[.25em]
$10^{4}$   & 3.49\e{-5} secs    & 3.33\e{-14}   & 1.43\e{-3} secs    & 5.28\e{-13} \\
\addlinespace[.25em]
$10^{5}$   & 3.46\e{-5} secs    & 1.50\e{-13}   & 1.44\e{-3} secs    & 6.58\e{-13} \\
\addlinespace[.25em]
$10^{6}$   & 3.41\e{-5} secs    & 5.11\e{-13}   & 1.45\e{-3} secs    & 3.04\e{-13} \\
\addlinespace[.25em]
$10^{7}$   & 3.48\e{-5} secs    & --   & 7.88\e{-4} secs    & -- \\
\addlinespace[.25em]
$10^{8}$   & 3.43\e{-5} secs    & --   & 7.87\e{-4} secs    & -- \\
\addlinespace[.25em]
$10^{9}$   & 3.38\e{-5} secs    & --   & 7.87\e{-4} secs    & -- \\
\addlinespace[.25em]
$10^{12}$   & 3.22\e{-5} secs    & --   & 7.85\e{-4} secs    & -- \\
\addlinespace[.25em]
$10^{15}$   & 3.38\e{-5} secs    & --   & 7.87\e{-4} secs    & -- \\
\addlinespace[.25em]
$10^{18}$   & 3.33\e{-5} secs    & --   & 7.85\e{-4} secs    & -- \\

\end{tabular}
\caption[The evaluation of the modal Green's function for varying wavenumber
when the source and target are close, in
double precision]{
{\bf The evaluation of the modal Green's function in double precision for
varying $\kappa$ with small source-to-target distance
($\beta_-=10^{-12}$)}.
The error is evaluated by using adaptive Gaussian quadrature as the gold standard.
Note for $\kappa > 10^6$, the resource requirements of prior methods becomes
excessive.
}
\label{ta:kappa2double}
\end{center}
\end{table}

\vspace{0.25in}
\begin{table}[H]
\begin{center}
\begin{tabular}{ccccc}
\multirow{2}{*}{$$}
& \multicolumn{2}{c}{$\beta_-=10^{-12}\,,m=10$} & \multicolumn{2}{c}{$\beta_-=10^{-12}\,,m=1000$}\\
$\kappa$ & Evaluation Time      & Absolute Error  & Evaluation Time      & Absolute Error   \\
\midrule
\addlinespace[.5em]
$10^{-6}$   & 7.23\e{-3} secs    & 7.06\e{-31}   & 2.08\e{-1} secs    & 5.75\e{-29} \\
\addlinespace[.25em]
$10^{-3}$   & 7.53\e{-3} secs    & 7.12\e{-31}   & 2.08\e{-1} secs    & 5.75\e{-29} \\
\addlinespace[.25em]
$10^{0}$   & 7.16\e{-3} secs    & 6.54\e{-31}   & 2.10\e{-1} secs    & 5.70\e{-29} \\
\addlinespace[.25em]
$10^{1}$   & 7.53\e{-3} secs    & 8.03\e{-32}   & 2.11\e{-1} secs    & 5.56\e{-29} \\
\addlinespace[.25em]
$10^{2}$   & 7.09\e{-3} secs    & 5.00\e{-32}   & 2.11\e{-1} secs    & 4.55\e{-29} \\
\addlinespace[.25em]
$10^{3}$   & 6.25\e{-3} secs    & 6.41\e{-32}   & 2.12\e{-1} secs    & 7.93\e{-30} \\
\addlinespace[.25em]
$10^{4}$   & 6.23\e{-3} secs    & 2.07\e{-32}   & 2.11\e{-1} secs    & 1.55\e{-31} \\
\addlinespace[.25em]
$10^{5}$   & 6.21\e{-3} secs    & 1.43\e{-31}   & 2.12\e{-1} secs    & 2.21\e{-31} \\
\addlinespace[.25em]
$10^{6}$   & 6.21\e{-3} secs    & 2.37\e{-31}   & 2.12\e{-1} secs    & 3.88\e{-31} \\
\addlinespace[.25em]
$10^{7}$   & 6.25\e{-3} secs    & --   & 1.52\e{-1} secs    & -- \\
\addlinespace[.25em]
$10^{8}$   & 6.22\e{-3} secs    & --   & 1.53\e{-1} secs    & -- \\
\addlinespace[.25em]
$10^{9}$   & 6.19\e{-3} secs    & --   & 1.52\e{-1} secs    & -- \\
\addlinespace[.25em]
$10^{12}$   & 5.78\e{-3} secs    & --   & 1.52\e{-1} secs    & -- \\
\addlinespace[.25em]
$10^{15}$   & 5.72\e{-3} secs    & --   & 1.52\e{-1} secs    & -- \\
\addlinespace[.25em]
$10^{18}$   & 5.69\e{-3} secs    & --   & 1.52\e{-1} secs    & -- \\

\end{tabular}
\caption[The evaluation of the modal Green's function for varying wavenumber
when the source and target are close, in
quadruple precision]{
{\bf The evaluation of the modal Green's function in quadruple precision for
varying $\kappa$ with small source-to-target distance
($\beta_-=10^{-12}$)}.
The error is evaluated by using adaptive Gaussian quadrature as the gold standard.
Note for $\kappa > 10^6$, the resource requirements of prior methods becomes
excessive.
}
\label{ta:kappa2quad}
\end{center}
\end{table}
\subsection{Performance of the Algorithm with Varying Fourier Mode ($m$)}
\label{se:performfourier}
We examined the performance of our algorithm over a wide range of Fourier
modes (represented by the parameter $m$).
Because the number of points in the quadrature scales linearly with $m$,
as demonstrated by Table \ref{ta:perform_m}, evaluation time 
scales linearly with the Fourier mode. Recall from the introduction
of this section that the evaluation was performed on a 
four-core processor using two threads.
\vspace{0.25in}
\begin{table}[H]
\begin{center}
\begin{tabular}{ccccc}
\multirow{2}{*}{$$}
$m$& \multicolumn{2}{c}{Evaluation Time$$} & \multicolumn{2}{c}{$$}\\
\midrule
\addlinespace[.5em]
1   & 3.88\e{-5} secs \\ 
\addlinespace[.25em]
10   & 5.56\e{-5} secs \\ 
\addlinespace[.25em]
$10^{2}$   & 1.75\e{-4} secs \\ 
\addlinespace[.25em]
$10^{3}$   & 1.46\e{-3} secs \\ 
\addlinespace[.25em]
$10^{4}$   & 1.43\e{-2} secs \\ 
\addlinespace[.25em]
$10^{5}$   & 1.37\e{-1} secs \\ 
\addlinespace[.25em]
$10^{6}$   & 1.36\e{0} secs \\ 
\addlinespace[.25em]
$10^{7}$   & 1.29\e{1} secs \\ 
 
\end{tabular}
\caption[The evaluation time of the modal Green's function in double precision
for varying Fourier mode]{
{\bf The evaluation time of the modal Green's function in double precision for
varying $m$ 
($\beta_-=10^{-12}, \kappa = 10,000$)}.
}
\label{ta:perform_m}
\end{center}
\end{table}
\subsection{Parallelization of the Algorithm}
\label{se:parallel}
The cost of our algorithm is $O(m)$ and does not depend on $\kappa$ or
$\beta_-$ (see Section \ref{se:sumalg}).
Because our algorithm is quadrature based, 
it is embarrassingly
parallelizable.

We measured the algorithm's performance on a server with a 16-core Intel
Xeon 2.9 GHz processor, where each core can run two threads for a 
total of 32-threads. 
We vary the number of threads from 1 to 32, and report the results in 
Figure \ref{fig:evalTime}.
\begin{figure}[H]
\centering
\includegraphics[width=0.75\textwidth]{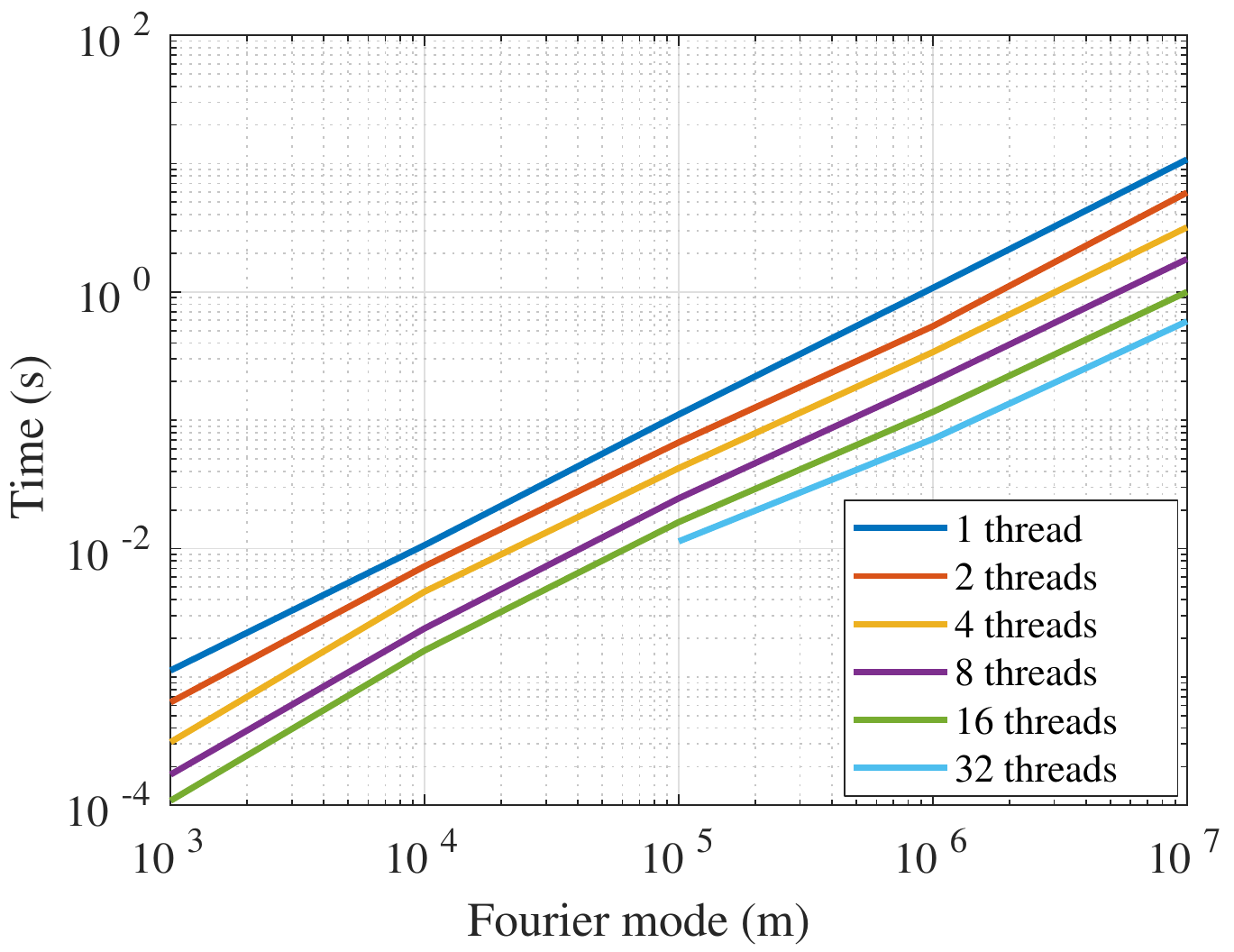}
\caption[Evaluation time of the modal Green's function with varying numbers of threads]{
{\bf Evaluation time of the modal Green's function plotted
against $m$ with varying numbers of threads 
($\beta_-=10^{-7}, \kappa = 10,000$)}. The calculation is performed in double precision.
The evaluation times corresponding to 32 threads are not plotted for small $m$.
}
\label{fig:evalTime}
\end{figure}
\section{Conclusions and Generalizations}
We have developed an algorithm which evaluates the modal Green's function
for the Helmholtz equation in $O(m)$ time, that is completely independent
of both the wavenumber and the source-to-target distance. Furthermore, our 
algorithm is embarrassingly parallelizable.
Our algorithm's method can be readily extended to several associated problems
in computational electromagnetics, described in Sections \ref{se:O1Small}-
\ref{se:O1amortize}.
\subsection{An $O(1)$ Evaluator for Small Wavenumber ($\kappa \ll m$)}
\label{se:O1Small}
Recall that our algorithm is independent of the wavenumber because 
we integrate along Gustafsson's contours, which are the steepest
descent contours with respect to the spherical wave component (see 
Section \ref{se:steepest}).
When the Fourier mode $m$ is larger than the scaled wavenumber $\kappa$, it is more
efficient to integrate along a different contour. If instead, we
choose the steepest descent 
contour on which $\exp{(im\phi)}$ does not oscillate,
we arrive at an alternative algorithm whose cost is
$O(\kappa)$ and independent of $m$ . When $\kappa$ is extremely small, this algorithm is 
essentially
$O(1)$. The case where $\beta_-$ 
is small (i.e., when the source and target are close) is handled in an identical
fashion to the method described in Section \ref{se:evalalpha}.
Thus, this alternative algorithm's cost 
is completely independent of
both $m$ and $\beta_-$, and grows as $O(\kappa)$.
\subsection{An $O(1)$ Evaluator of the Modal Green's Functions for the Laplace Equation}
\label{se:O1Laplace}
The same method described in Section \ref{se:O1Small} can be applied to the case
where $\kappa=0$ to yield an $O(1)$ evaluator of the modal Green's function
for the Laplace equation, whose cost is independent of $\beta_-$ (i.e.,
the cost is independent of the source-to-target distance).
\subsection{Extension of the Algorithm to Complex $\kappa$}
In this manuscript, we assumed $\kappa \in \mathbb{R}$ and $\kappa >0$,
where $\kappa$ is the scaled wavenumber.
When the scaled wavenumber $\kappa$ is complex (i.e., when the medium is
attenuating), 
Gustafsson's steepest descent contours are rotated in the complex plane.
The same algorithm described in this manuscript applies in this 
case, with the only modification being a change in the geometry
of the steepest descent contours and the locations of the intersection
points of the contours with the Bernstein ellipse.
\subsection{Extension to an $O(m)$ Evaluator for a Collection of
Modal Green's Functions, with Amortized Cost $O(1)$}
\label{se:O1amortize}
This manuscript presents an algorithm for the evaluation of a single
modal Green's function for the Helmholtz Equation in $O(m)$ time, 
independent of $\beta_-$ and $\kappa$,
where $\beta_-$ is the scaled minimum source-to-target distance and $\kappa$
is the scaled wavenumber.
It is possible to use this algorithm to compute all of the modal Green's functions
$-M,-M+1,\dots,M-1,M$ in $O(M)$ time using the following method.
In \cite{matviyenko}, Matviyenko presents a five-term recurrence 
relation for the modal Green's functions for the Helmholtz equation.
He observes that the recurrence relation is stable upwards for one range
of Fourier modes and stable downwards for another range of modes.
Furthermore, there exists a range of modes for which the recurrence
is bi-unstable. Thus, a classical Miller-type algorithm cannot be applied.
However, it was recently observed in \cite{osipov} that if a recurrence
relation is represented as a banded matrix, then the inverse power method
can be used to find a solution, even when the stability behavior is mixed
in the sense just described.
We thus apply the inverse power method, as described in \cite{osipov}, to
the resulting five-diagonal matrix corresponding to Matviyenko's recurrence
relation.
In this fashion, we obtain all the eigenvectors corresponding to the
zero eigenvalue; only one vector in this eigenspace corresponds to
the vector of modal Green's functions. We thus
use the $O(m)$ evaluator of this manuscript to select the vector 
corresponding to the modal Green's functions.
The cost of performing the inverse-power method is $O(M)$, and the cost
of the evaluation
of the $M$th modal Green's function is $O(M)$, meaning that all $M$ Fourier
coefficients are obtained in $O(M)$ time.
\newpage

\end{document}